\theoremstyle{plain}
\newtheorem{theorem}{Theorem}[section]
\newtheorem{lemma}[theorem]{Lemma}
\newtheorem{proposition}[theorem]{Proposition}
\newtheorem{corollary}[theorem]{Corollary}
\newtheorem{question}[theorem]{Question}
\theoremstyle{definition}
\newtheorem{definition}[theorem]{Definition}
\newtheorem{remark}[theorem]{Remark}
\newtheorem{example}[theorem]{Example}
\theoremstyle{definition}
\def\fnum{equation}
\numberwithin{equation}{section}
\begin{document}
\title
[Isoperimetric interpretation for Renormalized volume]
{Isoperimetric interpretation for the Renormalized volume of convex co-compact hyperbolic $3$-manifolds}

\address{
	Department of Mathematics, Yale University, New Haven, CT 08540, USA
}
\email{franco.vargaspallete@yale.edu}

\address{
				Universidade Federal de Minas Gerais, 			Belo Horizonte 30123-970, Brazil
} 
\email{celso@mat.ufmg.br}

\author{Franco Vargas Pallete and Celso Viana}
\thanks{FVP was supported by the Minerva Research Foundation and by NSF grant DMS-2001997. Part of this material is also based upon work supported by the National Science Foundation under Grant No. DMS-1928930 while FVP participated in a program hosted by the Mathematical Sciences Research Institute in Berkeley, California, during the Fall 2020 semester.}

\begin{abstract}
We reinterpret renormalized volume as the asymptotic difference
of the isoperimetric profiles for convex co-compact hyperbolic 3-manifolds. By similar techniques
we also prove a sharp Minkowski inequality for horospherically convex sets in $\mathbb{H}^3$. Finally, we include
the classification of stable constant mean curvature surfaces  in regions bounded by two  geodesic planes in $\mathbb{H}^3$ or in cyclic quotients of $\mathbb{H}^3$.
\end{abstract}

\maketitle

\section{Introduction}

\noindent Renormalized volume is a geometric quantity motivated by the AdS/CFT correspondence and the calculation of gravitational action (see Witten \cite{W}). For a convex co-compact hyperbolic 3-manifold, one can use the duality done by Epstein \cite{E} between conformal metrics at infinity ($\partial_{\infty} \mathbb{H}^3$) and immersions into $\mathbb{H}^3$ to construct a submanifold $N\subset M$ so that $V_R(M)$ is equal to the volume of $N$ minus half of the integral of the mean curvature of $\partial N$. In this setup,  renormalized volume can be characterized as the antiderivative of a 1-form defined by the Schwarzian derivative of the uniformization maps at infinity.  See Section 3 for precise definition of renormalized volume addressed here and  further discussion. This present work reinterprets renormalized volume of an acylindrical hyperbolic manifold $M$ as the asymptotic difference between the isoperimetric profile of $M$ and the isoperimetric profile of the representative with totally geodesic convex core in the deformation space of $M$. An \textit{isoperimetric profile} of a given manifold is a function that for each number $V>0$ assigns the optimal perimeter of a region of volume $V$. There are two profiles we consider: $I_M(V)$ and $J_M(V)$, the outermost isoperimetric profile and the isoperimetric profile, respectively. While $J_M(V)$ is taken without restriction, $I_M(V)$ is defined taking optimal perimeters between regions containing all compact minimal surfaces in $M$. This requires the competitor for $I_M(V)$ to contain a region with minimal boundary, which we call the \textit{outermost region} of M.
 
\begin{theorem}
	Let $M$ be a convex co-compact hyperbolic $3$-manifold that is either acylindrical or quasifuchsian and let $\Omega_0$ be its outermost region. If $I_{M}, I_{TG}$ denote the outermost isoperimetric profiles of $M$ and $M_{TG}$ (the quasiconformal deformation of $M$ with Fuchsian ends) respectively, then
	\begin{eqnarray*}
		V_R(M)-|\Omega_0|=	\frac12\lim_{V\rightarrow \infty} \big(I_{TG}(V)- I_{M}(V)\big).
	\end{eqnarray*}
\end{theorem}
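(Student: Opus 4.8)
The plan is to analyze the large-volume asymptotics of the outermost isoperimetric profile $I_M$ directly, and to show that its divergent part is a topological invariant shared with the totally geodesic representative $TG$, while its finite part records $V_R(M)-|\Omega_0|$. First I would establish the structure of large isoperimetric regions: for $V$ sufficiently large the outermost minimizer $\Omega_V$ is a smooth region containing $\Omega_0$ whose boundary $\Sigma_V=\partial\Omega_V$ is a stable constant mean curvature surface isotopic to $\partial_\infty M$, with mean curvature $H_V\in(0,1)$, and that the family $\{\Sigma_V\}$ exhausts the funnel ends with $H_V\to 1$ as $V\to\infty$. Together with the first variation of area this gives $I_M'(V)=2H_V$ (with $H$ the mean of the principal curvatures), so $I_M$ is recovered by integrating the mean curvature of the foliating family, and $W(\Omega_V):=V-\tfrac12\int_{\Sigma_V}H\,dA=V-\tfrac12 H_V\,I_M(V)$ is the $W$-volume of the minimizer.

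Next I would compute the asymptotics of $I_M(V)$ using Fermi coordinates off a fixed convex surface $\Sigma_0$ in each end. For the equidistant family $\Sigma_\rho$ one has $dA_\rho=(\cosh\rho+\lambda_1\sinh\rho)(\cosh\rho+\lambda_2\sinh\rho)\,dA_0$, and integrating this, together with the Gauss equation $\lambda_1\lambda_2=K_{\Sigma_0}+1$ and Gauss--Bonnet, the constant term of the area equals $-\pi\chi(\partial_\infty M)$, while the $e^{2\rho}$ and $e^{-2\rho}$ coefficients are the areas at infinity of the induced metric and of its dual. Eliminating the parameter against the enclosed volume yields an expansion of the form $I_M(V)=2V+\pi\chi(\partial_\infty M)\ln(2V)+c_M+o(1)$. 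The crucial point is that the two divergent terms $2V$ and $\pi\chi(\partial_\infty M)\ln(2V)$ depend only on the Euler characteristic of $\partial_\infty M$; since $M$ and $TG$ share the same conformal boundary topology, these terms cancel in $I_{TG}(V)-I_M(V)$, so the limit exists and is finite.

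It then remains to identify the finite constants. Here I would use the Epstein description $V_R=\Vol(N)-\tfrac12\int_{\partial N}H\,dA$ and the calibration that the totally geodesic representative satisfies $V_R(TG)=|\Omega_{0,TG}|$: its convex-core boundary is totally geodesic, so $H\equiv 0$ and $W$ equals the core volume, while the Poincar\'e Epstein surface is exactly that geodesic boundary. Writing $\tfrac12\bigl(I_{TG}(V)-I_M(V)\bigr)\to\tfrac12(c_{TG}-c_M)$ and expressing each $c_\bullet$ through $W(\Omega_V)$ and the renormalized volume, one obtains $\tfrac12(c_{TG}-c_M)=\bigl(V_R(M)-|\Omega_0|\bigr)-\bigl(V_R(TG)-|\Omega_{0,TG}|\bigr)=V_R(M)-|\Omega_0|$.

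The main obstacle is this last identification of the finite part. Because the $W$-volume is \emph{not} invariant under the normal flow and $V_R$ is its value at the specific Epstein (equidistant-at-infinity) leaf, the genuine isoperimetric surfaces---constant mean curvature but not equidistant from a fixed convex surface---introduce a discrepancy that must be controlled to the constant order. Carrying this out requires a Schl\"afli-type comparison between the CMC foliation by the minimizers and the equidistant/Epstein foliation, together with uniform estimates showing that the difference contributes only the universal topological constant and the term $2|\Omega_0|-2V_R(M)$, with no hidden finite remainder. The existence and regularity of the large isoperimetric regions, which lean on the classification of stable CMC surfaces in the ends, are the other essential technical input.
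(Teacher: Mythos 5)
You have correctly isolated the crux, but you have not proved it: your third and fourth paragraphs openly defer the identification of the finite part of the expansion $I_M(V)=2V+\pi\chi(\partial M)\ln(2V)+c_M+o(1)$ to an unspecified ``Schl\"afli-type comparison'' between the CMC minimizers and the Epstein foliation, and that identification \emph{is} the theorem. Your Fermi-coordinate computation off a fixed convex surface only controls the equidistant family, which yields the easy inequality (the isoperimetric profile is at most the area of the equidistant leaf, which is the content of Proposition \ref{prop:asympprofileVR} and inequality (\ref{RenVolProfile}) in the paper); the hard direction is to show the genuine isoperimetric leaves cannot do better at the constant order. The paper closes this not by comparing foliations geometrically but by a variational argument (Proposition \ref{renormalized isoperimetric}): flow the isoperimetric surface $\partial\Omega_V$ outward equidistantly to produce, via Epstein duality, a conformal metric at infinity of some area $\beta$ (computed in Lemma \ref{cheeger}); use the fact that among conformal metrics of fixed area the $W$-volume is maximized at the constant-curvature metric (renormalized Ricci flow/Polyakov, Proposition \ref{polyakov}) to get $|\Omega_V|-\frac12\int_{\partial\Omega_V}H\,da\leq V_R(M,\beta)$; rescale via $V_R(M,\beta)=V_R\big(M,\frac{\pi}{2}|\chi(\partial M)|\big)-\frac{\pi}{2}\chi(\partial M)\log\big(\frac{2\beta}{\pi|\chi(\partial M)|}\big)$; and finally evaluate $\lim_{V\to\infty}\int_{\partial\Omega_V}(H-1)\,da=\pi\chi(\partial M)$ using the monotonicity and boundedness of the Hawking mass (Lemmas \ref{monotonicity} and \ref{hawking mass negative}). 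Your proposal contains no substitute for these last three steps, so the finite-part identification remains unproven.

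Two smaller points. First, your stated obstacle that ``the $W$-volume is not invariant under the normal flow'' is misdiagnosed: with the correction term $r\pi\chi(\partial M)$ in (\ref{RenVol}) the $W$-volume is exactly $r$-independent, and this invariance is precisely what lets one transport the quantity $|\Omega_V|-\frac12\int H$ from the isoperimetric leaf out to infinity; the genuine difficulty is that the conformal metric at infinity so obtained is not the constant-curvature one, which is why the maximization property of $W$ among fixed-area conformal metrics is needed. Second, your calibration $V_R(TG)=|\Omega_{TG}|$ and the cancellation of the divergent terms $2V$ and $\pi\chi(\partial M)\ln(2V)$ are correct and match the paper's final step (where boundedness of $I_{TG}-I_M$ forces the logarithmic terms to cancel), so once the gap above is filled your argument would indeed reproduce the theorem.
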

\noindent See Section \ref{sec:isomperimetricconvexcocompact} for precise definitions and properties of $I_M, J_M, M_{TG}$ and $\Omega_0$.

\noindent It is  well known that convex co-compact hyperbolic $3$-manifolds have near infinity a foliation with CMC leaves \cite{MP}. In Theorem \ref{foliation is isoperimetric} we show that such foliation is in fact isoperimetric. Using this fact we prove  that the renormalized volume  is determined by the geometric data of that CMC foliation:
\begin{theorem} Let $M$ be a convex co-compact hyperbolic 3-manifold. If $J_M$ is the isoperimetric profile of $M$, then
	\begin{eqnarray*}
		V_R(M) +\frac{\pi}{2}\chi(\partial M) = \lim_{V\rightarrow \infty} \bigg( 
		V-  \frac{1}{2}J_{M}(V) + \pi \chi(\partial M)\log \sqrt{\frac{2\,J_M(V)}{\pi |\chi(\partial M)|}}\,
		\bigg).
	\end{eqnarray*}
\end{theorem}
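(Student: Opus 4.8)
The plan is to use Theorem~\ref{foliation is isoperimetric} to convert the statement into an asymptotic analysis of the CMC foliation near infinity, and then to match the resulting finite part with the renormalized volume. First I would invoke Theorem~\ref{foliation is isoperimetric}: for every sufficiently large $V$ the isoperimetric region of volume $V$ is the region $\Omega_V$ bounded by a leaf $\Sigma_V$ of the CMC foliation of \cite{MP}, so that $J_M(V)=\mathrm{Area}(\Sigma_V)$ and the leaves are nested and exhaust the ends as $V\to\infty$. Write $A:=J_M(V)$ and let $H$ denote the mean curvature of $\Sigma_V$, normalized as the sum of the principal curvatures so that horospheres have $H=2$. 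The first-variation formulas for a foliation by constant mean curvature leaves give $J_M'(V)=dA/dV=H(V)$. Since the leaves converge to horospheres, $H\to 2$ and $A\sim 2V$, so it suffices to control the rate at which $H\to 2$.

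Second, I would pin that rate down with Gauss--Bonnet. On a leaf the Gauss equation in $\mathbb{H}^3$ reads $K_{\Sigma_V}=-1+\lambda_1\lambda_2$, so $\int_{\Sigma_V}K\,dA=2\pi\chi(\partial M)$ gives $\int_{\Sigma_V}\lambda_1\lambda_2\,dA=A+2\pi\chi(\partial M)$. Using $\lambda_1\lambda_2=\tfrac14\big(H^2-(\lambda_1-\lambda_2)^2\big)$ and that $H$ is constant on $\Sigma_V$, this rearranges to
\[
H^2=4+\frac{8\pi\chi(\partial M)}{A}+\frac1A\int_{\Sigma_V}(\lambda_1-\lambda_2)^2\,dA .
\]
The key estimate is that the leaves become umbilic fast enough, namely $\int_{\Sigma_V}(\lambda_1-\lambda_2)^2\,dA=O(1/A)$. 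This decay of the trace-free second fundamental form should follow from the convex co-compact end structure (the Epstein normal form, cf.\ \cite{E}), in which the trace-free part of the shape operator decays like $e^{-2r}$ while the area element grows like $e^{2r}$; I would establish it for the CMC leaves by comparison with the equidistant foliation. Granting it, $H=2+\tfrac{2\pi\chi(\partial M)}{A}+O(1/A^2)$.

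Third, differentiating the proposed quantity shows the limit exists. With $\chi:=\chi(\partial M)$, set $F(V)=V-\tfrac12 A+\tfrac{\pi\chi}{2}\log\frac{2A}{\pi|\chi|}$, which is exactly the expression in the theorem. Then $F'(V)=1-\tfrac12 H+\tfrac{\pi\chi}{2}\cdot\frac{H}{A}$, and inserting the expansion of $H$ makes the two $O(1/A)$ contributions cancel, leaving $F'(V)=O(1/A^2)=O(1/V^2)$. As this is integrable near $V=\infty$, the limit $\lim_{V\to\infty}F(V)$ exists.

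Finally, I would identify this limit with $V_R(M)$. The mechanism is that the divergent part of $V-\tfrac12 A$ equals $-\pi\chi\,r$, where $r$ is the geodesic displacement of $\Sigma_V$, while the logarithmic term equals $+\pi\chi\,r+o(1)$: the constant $\pi|\chi|=\tfrac12\,\mathrm{Area}_{\mathrm{hyp}}(\partial M)$ is precisely calibrated so that $\tfrac12\log\frac{2A}{\pi|\chi|}=r+o(1)$ once the induced metric on $\Sigma_V$, rescaled by $A$, is compared with the uniformizing hyperbolic representative of the conformal boundary. Thus the logarithm is the conformal anomaly of the Fefferman--Graham/Epstein expansion re-expressed through the geometric cutoff $A$, and after its cancellation the finite remainder is the Epstein W-volume $\mathrm{Vol}(N)-\tfrac12\int_{\partial N}H$ that defines $V_R(M)$. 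I expect the main obstacle to be making this last matching rigorous --- verifying that aligning the area level-set cutoff with the uniformizing normalization introduces no spurious additive constant. I would check this directly in the Fuchsian/totally geodesic model, where all quantities are explicit and $F(V)\to\tfrac{\pi}{2}\chi(\partial M)$, and then transport it to the general convex co-compact case using the asymptotic equivalence of the CMC and equidistant foliations established above together with the known invariance properties of $V_R$.
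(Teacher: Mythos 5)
Your first three steps take a genuinely different, more computational route than the paper (which never differentiates the profile along the foliation), and they are sound as far as they go: granting Theorem~\ref{foliation is isoperimetric}, the Gauss--Bonnet identity $H^2=4+\tfrac{8\pi\chi}{A}+\tfrac{1}{A}\int_{\Sigma_V}(\lambda_1-\lambda_2)^2\,dA$ and the cancellation in $F'(V)$ are correct. But the whole convergence argument hangs on the asserted decay $\int_{\Sigma_V}(\lambda_1-\lambda_2)^2\,dA=O(1/A)$, which you do not prove, and it is not a technicality: your own computation gives $F'(V)=-\tfrac{1}{8A}\int_{\Sigma_V}(\lambda_1-\lambda_2)^2\,dA+O(A^{-2})\leq O(A^{-2})$, so $F$ is essentially non-increasing, and without integrability of $Q(V)/A(V)$ (where $Q=\int|\lambda_1-\lambda_2|^2$) the limit could a priori be $-\infty$. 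The pointwise $e^{-2r}$ decay from the Epstein normal form holds for \emph{equidistant} surfaces; transferring it to the Mazzeo--Pacard CMC leaves requires quantitative graph estimates you have not supplied. The paper avoids this entirely: it extracts the needed asymptotic $\lim_{V\to\infty}\int_{\partial\Omega_V}(H-1)\,da=\pi\chi(\partial M)$ from the Hawking mass being monotone (Lemma~\ref{monotonicity}) and bounded (Lemma~\ref{hawking mass negative}), with no umbilicity rate at all.

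The fourth step, which you concede is the crux, is where the proposal genuinely fails. The CMC leaves are not mutually equidistant (equidistants are not CMC unless the end is Fuchsian), so ``the geodesic displacement $r$ of $\Sigma_V$'' is undefined and the claimed cancellation $-\pi\chi r+\pi\chi r$ has no rigorous meaning; each leaf generates its own conformal metric at infinity, varying with $V$ and with no a priori convergence to the uniformizing metric $h_0$. The missing ingredient is exactly the variational mechanism the paper uses: flow each isoperimetric surface to infinity, read off the area $\beta$ of the induced conformal metric via Lemma~\ref{cheeger} (equation~(\ref{eq1})), dominate its $W$-volume by $V_R(M,\beta)$ using that constant-curvature metrics maximize $W$ at fixed area (renormalized Ricci flow, Proposition~\ref{polyakov}) together with the scaling law $V_R(M,\beta)=V_R(M,\tfrac{\pi}{2}|\chi|)-\tfrac{\pi}{2}\chi\log\tfrac{2\beta}{\pi|\chi|}$, and obtain the reverse inequality by isoperimetric comparison along the equidistant foliation of $h_0$ (equation~(\ref{RenVolProfile})). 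Moreover, your calibration check is internally inconsistent with the target identity: under the paper's normalization $V_R$ of a Fuchsian manifold is $0$ (Theorem~\ref{Comparison Renormalized volume}; equivalently $V_R(g_F)=|\Omega_{TG}|$), yet your (correct) Fuchsian slab computation gives $F(V)\to\tfrac{\pi}{2}\chi(\partial M)\neq 0$, i.e.\ it matches $V_R+\tfrac{\pi}{2}\chi(\partial M)$ rather than $V_R$. This constant is precisely the shift appearing in Proposition~\ref{prop:asympprofileVR}, and your model computation shows the additive bookkeeping in this identity is delicate enough that it cannot be settled by a model check plus ``transport''; it has to be carried through the $W$-volume formalism as in (\ref{eq1})--(\ref{take limit}), where the limit $\int(H-1)\,da\to\pi\chi(\partial M)$ enters. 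As written, then, the proposal has two concrete gaps --- the unproven umbilicity decay and a mis-calibrated (indeed, contradictory) identification of the limiting constant --- and neither is fillable without importing the paper's Hawking-mass and $W$-volume machinery.
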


\noindent The volume-comparison interpretation of renormalized volume is on a similar spirit to the notions defined for asymptotically hyperbolic manifolds, see for instance the one studied by Brendle
and Chodosh \cite{BC} (see also \cite{JSZ}).  The work \cite{JSZ} also  proves a sharp  isoperimetric  comparison result for AH $3$-manifolds with scalar curvature $R\geq -6$.   We prove the following  comparison for the isoperimetric profile $I_{TG}$ of the convex-co compact hyperbolic $3$-manifold with totally geodesic convex core:

\begin{theorem}\label{global comparison}
	Let $M$ be a convex co-compact hyperbolic $3$-manifold that is either acylindrical or quasifuchsian and $\Omega_0$ its outermost region. 	If	$V_R(M)>|\Omega_0|$, then $I_M(V)<I_{TG}(V)$ for every volume $V\geq0$. 
	
If $M$ contains only one minimal surface, then $I_M(V)<I_{TG}(V)$ for every volume $V\geq0$.
\end{theorem}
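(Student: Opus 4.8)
The plan is to compare the two profiles through the ordinary differential (in)equalities satisfied by the constant mean curvature surfaces that realize them, and then to transfer the asymptotic information of the first theorem inward by a monotonicity argument. First I would record that, since the relevant isoperimetric regions are bounded by leaves of a CMC foliation (isoperimetric by Theorem~\ref{foliation is isoperimetric}), one has $I'(V)=H(V)$, the constant mean curvature of the leaf enclosing volume $V$. Differentiating $H$ along the foliation and using $\overline{\mathrm{Ric}}(\nu,\nu)=-2$ in $\mathbb{H}^3$ together with the pointwise inequality $|A_\Sigma|^2\ge \tfrac12 H^2$ (equality exactly at totally umbilic leaves) gives that the totally geodesic profile solves the umbilic equation $I_{TG}I_{TG}''+\tfrac12 (I_{TG}')^2=2$ \emph{exactly} — its leaves are the equidistants of the geodesic core boundary, of area $A_0=2\pi|\chi(\partial M)|$ — while $I_M$ is a \emph{subsolution}, $I_M I_M''+\tfrac12(I_M')^2\le 2$. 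Both satisfy $I'(0)=0$, and Gauss--Bonnet bounds the area of any minimal surface by $2\pi|\chi(\partial M)|$, so $I_M(0)=A_M\le A_0=I_{TG}(0)$, with equality iff $M$ is the totally geodesic representative $M_{TG}$.

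Next I would introduce the quantity $G(V)=\tfrac12 I(I')^2-2I$. For the umbilic solution $G_{TG}\equiv -2A_0$, whereas $G_M'=I_M'\big(I_MI_M''+\tfrac12(I_M')^2-2\big)\le 0$ because $I_M'=H\ge 0$; thus $G_M$ is non-increasing with $G_M(0)=-2A_M$. The essential input is that $\lim_{V\to\infty}G_M=-2A_0$: the finiteness of $\lim_{V\to\infty}(I_{TG}(V)-I_M(V))$ provided by the identity $V_R(M)-|\Omega_0|=\tfrac12\lim_{V\to\infty}(I_{TG}(V)-I_M(V))$ forces the logarithmically divergent parts of the two profiles to coincide, which pins the limit of $G_M$ to the value $G_{TG}$. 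Being non-increasing from $-2A_M\ge -2A_0$ down to $-2A_0$, $G_M$ stays $\ge -2A_0$; equivalently, at a common area $a$ one has $(I_M')^2=4+2G_M/a\ge 4-4A_0/a=(I_{TG}')^2$.

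I would then pass to the inverse functions $V_M(a),V_{TG}(a)$ (volume as a function of enclosed area). For $V$ with $I_M(V)\le A_0$ the inequality is immediate, since $I_M(V)\le A_0=I_{TG}(0)\le I_{TG}(V)$. For larger areas, $(I_M')^2\ge (I_{TG}')^2$ makes $\Psi(a):=V_M(a)-V_{TG}(a)$ non-increasing on $[A_0,\infty)$; moreover $\Psi(A_0)=V^{*}>0$ (the volume needed for $I_M$ to grow from $A_M$ to $A_0$) and, converting the profile gap through $I'\to 2$, $\lim_{a\to\infty}\Psi(a)=V_R(M)-|\Omega_0|$. Hence $\Psi(a)\ge V_R(M)-|\Omega_0|$ for all $a$. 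Under the hypothesis $V_R(M)>|\Omega_0|$ this lower bound is positive, so $\Psi>0$, i.e. $I_M(V)<I_{TG}(V)$ for every $V\ge 0$.

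For the unique-minimal-surface statement, the same chain applies once one knows $V_R(M)\ge|\Omega_0|$; here $\Omega_0$ is the convex core, and strictness is recovered from the fact that a non-increasing $\Psi\ge 0$ cannot vanish at a finite area without $G_M$ being constant, i.e. without umbilic leaves, which would force $M=M_{TG}$. The main obstacle is precisely in making these two steps rigorous: justifying the subsolution inequality and the monotonicity of $G_M$ for the genuine outermost isoperimetric profile — which need not be $C^2$, and whose realizing surfaces are known to be CMC leaves only near infinity — and, in the second case, establishing the sign $V_R(M)\ge|\Omega_0|$ from uniqueness of the minimal surface (the first case simply assumes it). Controlling the logarithmic asymptotics so as to identify $\lim_{V\to\infty}G_M$ with $G_{TG}$, and handling the possible lack of smoothness of $I_M$ by running the comparison in a barrier/viscosity sense, are where the real work lies.
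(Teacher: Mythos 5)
Your proposal is correct in substance, but it takes a genuinely different route from the paper's at the decisive comparison step. The paper (proof of Proposition \ref{main result}, applied verbatim in Corollary \ref{cor:comparisonprofiles}) runs a maximum-principle argument on the difference $f(V)=I_M(V)-I_{TG}(V)$: subtracting the Hawking-mass identity for $I_M$ (from the definition (\ref{hawking mass})) from the exact relation $2\pi\chi(\partial M)+\tfrac14 (I_{TG}')^2 I_{TG}-I_{TG}=0$, it shows that $f$ cannot have a positive local maximum, since at such a point $I_M'=I_{TG}'$ and $I_M>I_{TG}$ would force $m_H(V)>0$, contradicting Lemma \ref{hawking mass negative}; combined with $f(0)<0$ (Gauss--Bonnet) and $f(+\infty)<0$ (the asymptotic identity, i.e.\ the hypothesis $V_R(M)>|\Omega_0|$), this gives $f<0$ everywhere, with corners of $I_M$ handled by a local replacement with the equidistant area profile. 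You instead \emph{integrate} the monotonicity: your quantity $G=\tfrac12 I (I')^2-2I$ is, up to the topological constant, the Hawking mass in disguise, namely $G=4\pi\chi(\partial M)-2m_H/\sqrt{I_M}$; you derive $G_M'\le 0$ from the weak second-variation inequality, pin $\lim_{V\to\infty}G_M=-2A_0=G_{TG}$, deduce $(I_M')^2\ge (I_{TG}')^2$ at equal areas, and integrate in the inverse variable so that $\Psi(a)=V_M(a)-V_{TG}(a)$ is non-increasing with limit $V_R(M)-|\Omega_0|>0$. What your route buys: the subsolution inequality $I_M I_M''+\tfrac12(I_M')^2\le 2$ needs only $\mathrm{Ric}(N,N)=-2$ and $|A|^2\ge\tfrac12(\operatorname{tr}A)^2$, so your monotonicity of $G_M$ is more elementary than the paper's Lemma \ref{monotonicity}, which invokes Gauss--Bonnet together with the genus control of Lemma \ref{components are hyperbolic}; it also makes the rigidity discussion transparent ($G_M$ constant forces umbilic isoperimetric boundaries). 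What the paper's route buys: the no-positive-maximum argument uses only the \emph{sign} of $m_H$, never the identification of its limit, so the delicate step you yourself flag --- pinning $\lim G_M$ --- is avoided. Note that your sketch of that step (``the logarithmically divergent parts must coincide'') can be made rigorous either by integrating $(I_M')^2=4+2G_M/I_M$, or, more directly and without circularity, as in the proof of Proposition \ref{renormalized isoperimetric}: $m_H$ is monotone non-decreasing and non-positive, hence bounded, so $m_H/\sqrt{I_M}\to 0$ and $G_M\to 4\pi\chi(\partial M)=-2A_0$ outright.

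Two slips to repair, both fixable. First, in the unique-minimal-surface case $\Omega_0$ is \emph{not} the convex core: by the paper's convention it has zero volume (in the quasifuchsian case the unique minimal surface bounds no compact region), and the sign you need is $V_R(M)>0=|\Omega_0|$, which comes from Theorem \ref{Comparison Renormalized volume}: $V_R(M)\ge \frac{v_3}{2}\Vert DM\Vert\ge 0$ with equality only when the convex core is totally geodesic. The Fuchsian case must therefore be excluded (as the paper's remark after Corollary \ref{cor:comparisonprofiles} indicates), since for Fuchsian $M$ one has $I_M\equiv I_{TG}$ and the strict inequality fails; your rigidity patch ($\Psi\ge 0$ vanishing at a finite area forces $G_M$ constant, hence umbilic leaves, hence $M=M_{TG}$) is exactly the right supplement here. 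Second, the identity $I'=H$ via leaves of the foliation is only available near infinity (Theorem \ref{foliation is isoperimetric}); for all volumes you must, as you note, work with the one-sided derivatives and the weak second derivative of (\ref{first derivative})--(\ref{second derivative}), where $(I')^{+}$ can jump down at corners --- which is consistent with $G_M$ non-increasing, the same bookkeeping the paper performs when it observes that $m_H$ jumps up at discontinuities of $I_M'$.
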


\noindent The authors in \cite{JSZ} use inverse mean curvature flow to produce a candidate profile that obeys the comparison inequality; the positivity of the Hawking mass plays an important role in their proof.  Theorem \ref{global comparison} is  based on the analytic features of the isoperimetric profile. One fundamental difference is the change of sign of the Euler characteristic of the boundary, and subsequently of the Hawking mass, since this changes the convexity/concavity properties we have at our disposal. On one side, we notice that the Hawking mass, being negative in our setting, allows the difference of profiles to have a positive local  maximum. On the other hand, it forbids a negative local  minimum. Hence one of our challenges is to successfully use these inverted properties for the Hawking mass in order to conclude a profile camparison. In some perspective Theorem \ref{global comparison}  reflects a result proved in \cite{BBB,VP} concerning the infimum  of the renormalized volume as a functional in the moduli space of convex co-compact hyperbolic $3$-manifolds.
\vspace{0.1cm}

\noindent We apply the duality  \cite{E}, relating horospherically convex sets in $\mathbb{H}^3$ with conformal metrics at infinity, and the renormalized Ricci flow to also prove  a sharp Minkowski type inequality that characterizes geodesic balls in $\mathbb{H}^3$. 
\begin{theorem}
	If $\Sigma$ is an horospherically convex surface bounding a compact region $\Omega \subset \mathbb{H}^3$, then
	\begin{eqnarray*}
		\int_{\Sigma} H\,d\Sigma - 2|\Omega| \, \geq\, 2\pi \log\bigg(1 + \frac{1}{2\pi}\int_{\Sigma}(H+1)d\Sigma \bigg) 
	\end{eqnarray*}
	with equality if, and only if, $\Sigma$ is a geodesic sphere.
\end{theorem}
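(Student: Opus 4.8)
The plan is to transport the whole problem to the conformal boundary $S^2=\partial_\infty\mathbb{H}^3$ via the Epstein duality of \cite{E}, reduce the stated inequality to a sharp Onofri-type inequality on the round sphere, and establish the latter through the monotonicity of a functional under the renormalized Ricci flow. First I would fix $H=\tfrac12(\kappa_1+\kappa_2)$ as the \emph{average} mean curvature (this is the normalization for which the geodesic sphere of radius $r$ gives equality, as a direct computation with $\kappa_1=\kappa_2=\coth r$ shows), so that horospherical convexity reads $\kappa_1,\kappa_2\ge 1$; this is exactly the condition guaranteeing that the outward equidistant surfaces $\Sigma_t$ stay embedded and that the hyperbolic Gauss map $\Sigma\to S^2$ is a diffeomorphism. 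Epstein's construction then associates to $\Sigma$ a conformal metric $\hat g$ on $S^2$, arising as the rescaled limit $\Sigma_t\sim \tfrac14 e^{2t}\hat g$, whose area element is $(1+\kappa_1)(1+\kappa_2)\,d\Sigma$. Since $\Sigma$ has genus $0$, the Gauss equation and Gauss--Bonnet give $\int_\Sigma \kappa_1\kappa_2\,d\Sigma=4\pi+|\Sigma|$, whence
\[
\mathrm{Area}(\hat g)=\int_\Sigma(1+\kappa_1)(1+\kappa_2)\,d\Sigma=2\Big(|\Sigma|+\int_\Sigma H\,d\Sigma+2\pi\Big),
\]
so that $1+\tfrac1{2\pi}\int_\Sigma(H+1)\,d\Sigma=\mathrm{Area}(\hat g)/4\pi$. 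This already rewrites the right-hand side of the theorem as $2\pi\log\big(\mathrm{Area}(\hat g)/4\pi\big)$, a purely conformal quantity.

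For the left-hand side I would introduce the $W$-volume $\mathcal W:=|\Omega|-\tfrac12\int_\Sigma H\,d\Sigma$, so that $\int_\Sigma H\,d\Sigma-2|\Omega|=-2\mathcal W$. Using $\tfrac{d}{dt}|\Omega_t|=|\Sigma_t|$ together with the explicit evolution of $|\Sigma_t|$ and $\int_{\Sigma_t}H\,d\Sigma_t$ under the normal flow, one finds $\tfrac{d}{dt}\big(|\Omega_t|-\tfrac12\int_{\Sigma_t}H\big)=-2\pi$; this identifies $\mathcal W$ with the Krasnov--Schlenker $W$-volume and, through Epstein's parametrization $\hat g=e^{2u}g_0$ (with $g_0$ the round metric of area $4\pi$), expresses it as the Liouville action $\mathcal W=-\tfrac14\int_{S^2}\big(|\nabla u|^2_{g_0}+2u\big)\,dg_0$, whose coefficients are pinned by the geodesic-sphere case $u\equiv r$ (where $\mathcal W=-2\pi r$ and the Dirichlet constant must be the sharp one). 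Consequently the theorem becomes equivalent to
\[
\tfrac12\int_{S^2}|\nabla u|^2_{g_0}\,dg_0+\int_{S^2}u\,dg_0\ \ge\ 2\pi\log\Big(\tfrac1{4\pi}\int_{S^2}e^{2u}\,dg_0\Big),
\]
which is precisely the Onofri inequality after the substitution $v=2u$.

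I would then prove this sharp inequality by running the normalized (area-preserving) Ricci flow $\partial_t g=2(\bar K-K)g$ on $S^2$, under which $e^{2u}g_0$ converges to a round metric of the initial area. Since the flow preserves area, the term $2\pi\log(\mathrm{Area}/4\pi)$ is constant along it, so the monotone object is just the Liouville energy $\tfrac12\int|\nabla u|^2+\int u$. Differentiating and using integration by parts and Gauss--Bonnet, I would reduce its derivative to a manifestly nonpositive integral of the form $-c\int(K-\bar K)^2\,dg$; as the functional $\mathcal F(u):=\tfrac12\int|\nabla u|^2+\int u-2\pi\log(\tfrac1{4\pi}\int e^{2u}\,dg_0)$ vanishes on the round limit (the geodesic-sphere case checked above), monotonicity yields $\mathcal F\ge 0$. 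Equality forces the flow to be stationary, i.e. $\hat g$ round; such a metric may be off-center, but it is a Möbius image of $e^{2r}g_0$ and hence, by equivariance of the Epstein map under isometries of $\mathbb{H}^3$, corresponds to a geodesic sphere.

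I expect the main obstacle to be the passage from the three-dimensional data $\int_\Sigma H$ and $|\Omega|$ to the two-dimensional Liouville action, equivalently securing the correct sign in the monotonicity of $\mathcal F$: this is precisely where the second fundamental form at infinity and the (negative-curvature) sign conventions enter, and where horospherical convexity is indispensable, since it is what makes $u$ globally defined on all of $S^2$ so that the sharp Onofri constant is actually available.
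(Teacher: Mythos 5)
Your proposal is correct and follows essentially the same route as the paper's proof: Epstein duality identifies $\int_\Sigma H\,d\Sigma-2|\Omega|$ with $-2W$ of the conformal metric at infinity, the area-normalized Ricci flow on $S^2$ makes this monotone with derivative controlled by $\int_{S^2}(K-\bar K)^2\,dvol$ (the paper's Cauchy--Schwarz step), and the sharp constant comes from the geodesic-ball computation $r=\tfrac12\log(1+2\lambda)$ — your explicit reduction to the Onofri inequality via the Liouville action is precisely the paper's Polyakov-type formula (Proposition \ref{polyakov}) and the closing corollary of Section 5 run in reverse, with the same rigidity mechanism (stationary flow forces a M\"obius-round metric, i.e.\ a geodesic sphere). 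One minor point: horospherical convexity in the paper's sense only requires $\kappa_i>-1$ (so that $(1+\kappa_1)(1+\kappa_2)>0$ and the Epstein metric is nondegenerate), not $\kappa_i\ge 1$ as you state, and the weaker condition is all your argument actually uses.
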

\noindent This inequality is not new. It was proved  by J. Nat\'{a}rio \cite{N} via  an asymptotic analysis at infinity for the normal flow and an application of the isoperimetric inequality. The rigidity statement is not obtained in \cite{N}. 
\vspace{0.1cm}

\noindent \textbf{Outline.} The article is organized as follows. In Section \ref{sec:isomperimetricconvexcocompact} we define an isoperimetric problem for manifolds with outermost minimal surfaces. We present  basic properties and then describe the behavior of the Hawking mass function on convex co-compact hyperbolic manifolds that will be needed in Section \ref{sec:isoperimetriccomparison}. In Section \ref{sec:VR} we introduce renormalized volume, following the correspondence between equidistant foliations and metrics at the conformal infinity. In Sections \ref{sec:foliationatinfinity} and \ref{sec:uniquenessisomperimetric} we describe how boundaries of isoperimetric regions foliate the ends of convex co-compact hyperbolic manifolds. Section \ref{sec:isoperimetriccomparison} is where we prove one of the main results by relating renormalized volume to the asymptotic behavior of isoperimetric profiles. In Section \ref{sec:minkowskiinequality} we apply similar techniques to prove a Minkowski inequality. In the Appendix we describe the isoperimetric profile  for  the region between two geodesic planes and for cyclic quotients of $\mathbb{H}^3$. Although it is a parallel discussion from the article's main content, this was the starting of the authors collaboration and   we include for completeness sake. We show that geodesic spheres and tubes about geodesics are the only  stable constant mean curvature surfaces.

\section*{Acknowledgments} \noindent We are grateful to the Institute for Advanced Study where this work was initiated during the Special Year of Variational Methods in Geometry.
C.V. also thanks Richard Schoen and the University of California Irvine for the support and  hospitality during his stay at UCI. 
The authors would like to thank Ian Agol, Lucas Ambrozio, Martin Bridgeman, Jeff Brock and Andr\'{e} Neves for their comments and interest in this work.

\section{Isoperimetric regions in convex co-compact hyperbolic $3$-manifolds}\label{sec:isomperimetricconvexcocompact}

 \noindent  A complete hyperbolic $3$-manifold $M$ is called \textit{convex co-compact} if there exist a compact convex set $U$ such that the  exponential map from $\partial U$ to the conformal infinity $\partial M$ is a diffeomorphism. Each component of  $\partial M$ is assumed  to be incompressible  in $M$ and has negative Euler characteristic. In particular, $\partial M$ is always disconnected.
Moreover, we say  $M$   is \textit{acylindrical} if any map $(S^1\times[0,1],S^1\times\lbrace0,1\rbrace)\rightarrow (M,\partial M)$ is homotopic relative to $S^1\times\lbrace0,1\rbrace$ into $\partial M$. 

An important class of convex co-compact hyperbolic $3$-manifold are the quasi-Fuchsian metrics. A Fuchsian $3$-manifold is simply $\Sigma \times \mathbb{R}$ with the  metric $g=dr^2+ \cosh^2(r)g_{\Sigma}$ where  $\Sigma$ is a closed orientable hyperbolic surface of genus $g>1$. One can observe that the surface $\Sigma\times\lbrace0\rbrace$ is totally geodesic and that each boundary at infinity has a conformal structure that can be identified to $\Sigma$ (after reversing orientation for one of the ends). A quasi-Fuchsian $3$-manifold is a convex co-compact manifold of the form $\mathbb{H}^3/G$, where $G$ is a quasi-conformal deformation of a Fuchsian group. This corresponds to having potentially distinct conformal structures at the boundaries, with equality (after reversing the orientation of one component) if and only if the manifold is Fuchsian. If $M$ is a convex co-compact manifold with $\partial M$ incompressible, then the covering associated to each boundary is quasi-Fuchsian.  

For $M$ acylindrical hyperbolic $3$-manifold, there exists unique hyperbolic structure so that each end is Fuchsian (see for instance \cite[Corollary 4.3]{McMullen}). This will be the model hyperbolic structure in $M$ we will use to compare isoperimetric profiles for $M$ acylindrical (we will use Fuchsian structure as models for quasi-Fuchsian structures). To see the existence of the acylindrical hyperbolic structure with Fuchsian ends (following \cite{McMullen}) one uses that the map from the conformal boundary of $M$ to the opposite side of the boundary coverings (also referred as \emph{skinning map}) is a contraction, so one can use a fixed point argument to find the unique hyperbolic structure in $M$ where each end has matching conformal boundaries, hence Fuchsian. We will denote such manifold by $M_{TG}$. This notation comes from the fact that the convex core (smallest convex set containing all closet geodesics) of $M_{TG}$ has totally geodesic boundary.

\vspace{0.1cm}

    Let $\Omega_0$ be the largest volume compact region in $M$  with the property  that  the boundary   $\partial \Omega_0$ is a minimal surface which is  homologous and diffeomorphic $\partial M$. Such region exist since $\partial M$ is incompressible, convex  (see \cite{MSY}). Observe as well that there exists a unique minimal surface configuration when the metric has totally Fuchsian ends. In particular, $\partial \Omega_0$ is connected in each end of $M$.
     We call the surface $\partial \Omega_0$ the \textit{outermost minimal surface}. If $M$ contains  only one minimal surface, then $\Omega_0$ has zero volume.
 
  We will introduce the related isoperimetric problems which are relevant to  the discussion in the next sections. For each $V>0$ we  consider  
\begin{eqnarray*}
\mathcal{R}_V^1&=&\{\Omega: \Omega\subset M\,\, \text{is a compact region with}\,\, \Omega_0\subset \Omega\,\, \text{and}\,\, vol(\Omega-\Omega_0)=V  \} \\
\mathcal{R}_V^2&=&\{\Omega: \Omega\subset M\,\, \text{is a compact region with}\,\,  \text{and}\,\, vol(\Omega)=V   \}
\end{eqnarray*}
and let 
\begin{equation}\label{isoperimetric problem with horizon}
I_M(V)= \inf\{area(\partial\Omega):\,\,\Omega\in \mathcal{R}_V^1  \} \quad \text{and} \quad
J_M(V)= \inf\{area(\partial\Omega):\,\,\Omega\in \mathcal{R}_V^2  \}. 
\end{equation}

In order to distiguish both notations, we will refer to $I_M(V)$ as the \textit{outermost isoperimetric profile} of $M$, while we refer to $J_M(V)$ as the usual \textit{isoperimetric profile}.

\noindent  For any   sequence of points $x_i$ diverging to infinity,   the injective radius $inj_{x_i}M$ becomes unbounded. In particular, $M$ has bounded geometry. For each volume $V$ there exists a minimizing sequence for the isoperimetric problem (\ref{isoperimetric problem with horizon}) that does drift off to infinity. For the regularity near the outermost boundary $\partial \Omega_0$ see  \cite[Section 4]{EM}. In the following lemma we summarize well known existence and regularity results \cite{A,M,SS}.
\begin{lemma}
	For every $V>0$, there exists an isoperimetric region  $\Omega\in \mathcal{R}_V^i$, $i=1,2$,  with $vol(\Omega)=V$. The surface $\Gamma_{\Omega}=\partial \Omega$ is a volume preserving stable constant mean curvature surface. Namely,
	\begin{equation*}
	 \int_{\Gamma} |\nabla f|^2 - (-2 + |A|^2)f^2\, d\Gamma \geq 0 \quad \text{whenever}\quad \int_{\Gamma} f\,d\Gamma=0.
	\end{equation*}
\end{lemma}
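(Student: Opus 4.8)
The plan is to run the direct method of the calculus of variations in the class of sets of finite perimeter, to control the potential loss of volume at infinity using the geometry of the ends, and finally to extract the constant mean curvature and the stated stability inequality from the first and second variation of area under a volume constraint.

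First I would fix $V>0$ and take a minimizing sequence $\{\Omega_i\}$ for perimeter within the admissible class (say $\mathcal{R}^2_V$; the argument for $\mathcal{R}^1_V$ is the same once one preserves the inclusion $\Omega_0\subset\Omega_i$). By the compactness theorem for Caccioppoli sets, after passing to a subsequence $\Omega_i\to\Omega$ in $L^1_{loc}(M)$ for some set $\Omega$ of finite perimeter, and lower semicontinuity of perimeter gives $\mathrm{area}(\partial\Omega)\le\liminf_i \mathrm{area}(\partial\Omega_i)$. The genuinely noncompact point is that $\mathrm{vol}(\Omega)=V'$ may satisfy $V'<V$: a portion of the volume can drift off to infinity, exactly as anticipated by the unbounded growth of the injectivity radius.

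The step I expect to be the main obstacle is ruling out this loss. Since $inj_xM\to\infty$ as $x\to\infty$, any volume concentrating near infinity is modeled on an isoperimetric region of $\mathbb{H}^3$, so a concentration--compactness dichotomy (in the spirit of Morgan's treatment \cite{M}) yields, in the escape case, an identity $I_M(V)=I_M(V')+I_{\mathbb{H}^3}(V-V')$ for some $0\le V'<V$. To rule this out I would exhibit a competitor that retains all of the volume inside a single cross-sectional slab of a funnel end; comparing the asymptotics of such slabs against those of geodesic balls in $\mathbb{H}^3$ shows its area is strictly smaller than $I_M(V')+I_{\mathbb{H}^3}(V-V')$, contradicting the identity. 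Hence $V'=V$ and $\Omega$ realizes the infimum. Alternatively, one may quote directly the existence results of \cite{A,M,SS} for this class of manifolds.

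With existence in hand, $\Gamma=\partial\Omega$ is a volume--constrained perimeter minimizer, hence an almost minimizer; the interior regularity theory of \cite{A,M} together with dimension reduction shows that $\Gamma$ is a smooth embedded surface away from $\partial\Omega_0$ (the interior singular set is empty in ambient dimension $3$), while boundary regularity along $\partial\Omega_0$ follows from \cite{EM}. The first variation at fixed enclosed volume forces the mean curvature of $\Gamma$ to be constant. Finally, for a volume--preserving normal variation with density $f$ satisfying $\int_\Gamma f\,d\Gamma=0$, the second variation of area is
\[
I(f,f)=\int_\Gamma |\nabla f|^2-\big(\mathrm{Ric}(\nu,\nu)+|A|^2\big)f^2\,d\Gamma\ge0,
\]
and since $M$ has constant sectional curvature $-1$ we have $\mathrm{Ric}(\nu,\nu)=-2$, which reproduces precisely the stated inequality with the term $-2+|A|^2$.
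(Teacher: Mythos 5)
Your overall architecture is exactly the standard route that the paper compresses into a remark and citations: direct method over sets of finite perimeter, lower semicontinuity, a concentration--compactness dichotomy with end model $\mathbb{H}^3$ (legitimate here since $inj_xM\to\infty$ and the curvature is constant), interior regularity via \cite{A,M,SS} with no singular set in ambient dimension $3$, obstacle regularity along $\partial\Omega_0$ via \cite{EM}, and the first/second variation with $\mathrm{Ric}(\nu,\nu)=-2$ reproducing the quadratic form $I(f,f)$. All of that matches the paper's (essentially citational) proof, and your variational computations at the end are correct.

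The genuine gap is in your mechanism for handling escaped volume. The slab-versus-ball comparison only works when the escaping volume $W=V-V'$ is large: a slab in a funnel end has boundary area at least twice the area of a cross-sectional leaf, a fixed positive constant, whereas $I_{\mathbb{H}^3}(W)\sim(36\pi)^{1/3}W^{2/3}\to 0$ as $W\to 0$, so for small $W$ your claimed strict inequality fails (the logarithmic advantage of the end, which stems from $\int_{\Sigma_r}(H-1)\,da\to\pi\chi<0$, only overtakes the fixed cost $2|\Sigma_{r_1}|$ for large $W$). Worse, for small $W$ there is no contradiction to be extracted at all: in a manifold of constant curvature $-1$, small geodesic balls realize exactly the hyperbolic profile, so the splitting identity $I_M(V)=I_M(V')+I_{\mathbb{H}^3}(W)$ can genuinely hold. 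The correct conclusion of the dichotomy in that regime is not absurdity but \emph{recovery} of the escaped mass: since $inj_{x}M\to\infty$, $M$ contains isometric copies of hyperbolic balls of every radius, so the escaped piece --- an isoperimetric region of $\mathbb{H}^3$, hence a geodesic ball of volume $W$ --- can be reinserted disjointly from the limit region $\Omega$, and $\Omega\cup B$ is then a minimizer of volume $V$; this is precisely the role of the paper's remark about unbounded injectivity radius. Alternatively, when $V'>0$ you can get strict subadditivity by observing that the limit region is isoperimetric for its own volume with smooth constant mean curvature boundary, so volume $w$ can be added at linear area cost $2Hw+O(w^2)<I_{\mathbb{H}^3}(w)$ for $w$ small, and reserve the slab comparison for large $W$. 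Either repair completes your argument; as written, the escape step is the one incomplete point (a lesser caveat, which the paper also glosses via \cite{EM}: for the problem $\mathcal{R}^1_V$, constancy of $H$ and the stability inequality require one-sided variations where $\Gamma$ touches the obstacle $\partial\Omega_0$).
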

\noindent Let us discuss the analytical properties of the isoperimetric profile.  Let $\Omega$ be an isoperimetric region in $M$ such that $vol(\Omega)=V$. Let $I$  denote both $I_M$ and $J_M$ in what follows. We first note that in our setting $I$ is absolutely continuous and twice differentiable
	almost everywhere, see \cite{FN}. In particular, the function $I(V)$ has left and right derivatives $I_-^{\prime}(V)$ and $I_+^{\prime}(V)$ and if $H$ is the  mean curvature (average of principal curvatures) of $\Gamma=\partial \Omega$ in the direction of the inward unit vector, then 
\begin{eqnarray} \label{first derivative}
(I)_{+}^{\prime}(v)\leq2H\leq (I)_{-}^{\prime}(v).
\end{eqnarray}
The second derivative exists weakly in the sense of comparison functions. More precisely, we say $f^{\prime\prime}\leq h$ weakly at $x_0$ if there exists a smooth function $g$ such that $f \leq g$, $f(x_0)=g(x_0)$, and $g^{\prime\prime}\leq h$. In this sense we have
\begin{eqnarray}\label{second derivative}
I(v)^2\, I^{\prime\prime}(v)+\int_{\Gamma}\big(Ric_g(N,N)+|A|^2\big)\,d\Gamma\leq\,0.
\end{eqnarray}
Let us sketch the proof of  (\ref{first derivative}) and (\ref{second derivative}):

Let $\Gamma_v$ be the variation $\Gamma_t= exp_{\Gamma}(tN)$   of $\Gamma$   re-parametrized  in terms of the enclosed volume $v(t)$ and let $\phi_0(t)$ (resp. $\phi_V(v)$) be the area of $\Gamma_t$ (resp. $\Gamma_v$).  Note that  $\phi_V(v)\geq I(v)$ and  $\phi_V(V)=I(V)$.
By the first variation formula for the  area and volume we have  $\phi_0^{\prime}(0)= 2\,H\, |\Gamma|$  and 
$v^{\prime}(0)=|\Gamma|$ respectively.
Since $\phi_0^{\prime}(t)=\phi_V^{\prime}(v)v^{\prime}(t)$, we conclude that
$\phi_V^{\prime}(v(0))= 2H$ and also that
$v^{\prime}(0)^2\phi_V^{\prime\prime}(v(0))=\phi_0^{\prime\prime}(0)-\phi_V^{\prime}(v(0))v^{\prime\prime}(0)$.
On the other hand,  it follows from the second derivative of area for general variations that
\begin{eqnarray*}
	\phi_0^{\prime\prime}(0)&=&
	-\int_{\Gamma}1\,L\,1\,d\Gamma +2H\,v^{\prime\prime}(0)\\
	&=& - \int_{\Gamma}\big(Ric_g(N,N)+|A|^2\big)\,d\Gamma +2H\, v^{\prime\prime}(0).
\end{eqnarray*}
Hence, in the sense of comparison functions,  (\ref{second derivative}) follows from:
\begin{eqnarray}\label{second derivative area}
\phi_V(v(0))^2\, \phi_V^{\prime\prime}(v(0))+\int_{\Gamma}\big(Ric_g(N,N)+|A|^2\big)\,d\Gamma=\,0.
\end{eqnarray}

\noindent One of the main reasons to require that the outermost minimal core is contained in each candidate region is so that the profile $I_M$ is monotone non-decreasing. We quickly justify this in the following lemma.

\begin{lemma}\label{profile increasing}
	The isoperimetric profile $I_M(V)$  is a strictly increasing function. 
\end{lemma}
\begin{proof}
Since $I_M$ satisfies (\ref{first derivative}) it is enough to show $I_M^{\prime}(V)>0$ at   volumes  $V$ for which $I_M^{\prime}(V)$ exists. Arguing by contradiction, we assume that  $I_M^{\prime}(V)<0$. It follows from (\ref{first derivative}) that $M-\Omega$ is mean convex. On the other hand,  the equidistant surfaces $\Sigma_t=exp_{\partial M}(tN)$ bounds convex regions for sufficiently large $t$. By Meeks-Simon-Yau \cite{MSY}, there  exists a compact minimal surface strictly between $\partial \Omega_0$ and $\Sigma_t$ which is homologous to $\partial M$. This contradicts the assumption that $\Omega_0$ is the outermost region.
\end{proof}

\noindent Let us now discard spherical and tori components as boundaries  of  isoperimetric regions for the outermost isoperimetric problem $I_M$. 
This will be useful to study the sign and monotonicity for the Hawking mass  below.

\begin{lemma}\label{components are hyperbolic}
	If $\Omega$ is an isoperimetric region with respect to $I_M$, then each connected component of $\partial \Omega$ has genus at least two. Moreover,  $\chi\big(\partial \Omega\big)\leq \chi(\partial M)$.
\end{lemma}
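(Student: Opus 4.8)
The plan is to prove that no component of $\partial\Omega$ can be a sphere or a torus, using the stability inequality together with the fact that $M$ is hyperbolic (so $Ric_g = -2g$ and $Ric_g(N,N) = -2$). First I would recall that by the previous lemma the isoperimetric region $\Omega$ contains $\Omega_0$, so $\partial\Omega_0$ (minimal) is homologous to $\partial M$, and hence each component of $\partial\Omega$ is incompressible and separates $\Omega_0$ from infinity; in particular, since each component of $\partial M$ has negative Euler characteristic and $\partial\Omega$ is isotopic to $\partial M$ through the product structure of the ends, no component can be a sphere. The genuine content is therefore to rule out tori.

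\medskip

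\noindent To rule out a toroidal component I would run the standard stability argument. Suppose $\Gamma'$ is a component of $\Gamma=\partial\Omega$ that is a torus, so $\chi(\Gamma')=0$. The stability inequality from the existence lemma says that for all $f$ with $\int_\Gamma f\,d\Gamma = 0$,
\begin{eqnarray*}
\int_\Gamma |\nabla f|^2 - (-2 + |A|^2)f^2\, d\Gamma \geq 0.
\end{eqnarray*}
The plan is to build a mean-zero test function that is supported on (or built from) the torus component and derive a contradiction. The clean way is to take $f$ constant on $\Gamma'$ and constant (of the opposite sign, suitably weighted) on the remaining components so that the total integral vanishes; with $f$ piecewise constant the gradient term drops out on the interior of each component, leaving
\begin{eqnarray*}
\int_\Gamma (-2 + |A|^2)f^2\, d\Gamma \leq 0.
\end{eqnarray*}
I would then bring in the Gauss equation for a surface in the hyperbolic manifold, $K_{\Gamma'} = -1 + \det A = -1 + \tfrac12(H^2 - |A|^2)$ with $H$ the (constant) mean curvature, to convert the $|A|^2$ term into the intrinsic curvature $K_{\Gamma'}$. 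Integrating and applying Gauss--Bonnet, $\int_{\Gamma'} K_{\Gamma'} = 2\pi\chi(\Gamma') = 0$ for a torus, will pin down $\int_{\Gamma'}(-2+|A|^2)\,d\Gamma$ in terms of $H^2$ and the area, producing a strictly positive quantity that contradicts the inequality above.

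\medskip

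\noindent The main obstacle I anticipate is the bookkeeping of the mean-zero constraint across multiple boundary components of possibly different topological type: a naively chosen piecewise-constant $f$ need not make each summand have the right sign, so I must be careful to arrange the constant on $\Gamma'$ relative to the constants on the other components so that the torus contribution is isolated and the sign works out. A cleaner alternative, which I would pursue if the component-by-component accounting becomes delicate, is to note that for a torus component the operator $L = \Delta + (-2+|A|^2)$ has its first eigenfunction constant-comparable, and to test with $f\equiv 1$ against the \emph{unconstrained} stability of a single CMC torus after subtracting the mean; since $\int_{\Gamma'}(-2+|A|^2) = \int_{\Gamma'}(H^2 - 2\det A - 2) $-type expressions combine with $\chi=0$ to give $\int_{\Gamma'}(-2+|A|^2)\,d\Gamma \geq 2|\Gamma'| > 0$ whenever $|A|^2 \geq 0$, directly violating stability. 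Either route reduces the genus-$0$ and genus-$1$ cases to the Gauss--Bonnet sign of $\chi$, so the genus-at-least-two conclusion follows; the spherical case is handled either by the same incompressibility observation or by the even simpler positive-$\chi$ sign in Gauss--Bonnet.
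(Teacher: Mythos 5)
Your overall skeleton (stability inequality with a piecewise-constant mean-zero test function, plus the Gauss equation and Gauss--Bonnet) is the same as the paper's, but there is a genuine gap: the whole argument hinges on knowing $H>1$, and you never establish it. Work out your Gauss--Bonnet step carefully: with $H$ the average of the principal curvatures, the Gauss equation gives $-2+|A|^2 = 4H^2-4-2K$, so on a torus component $\Gamma'$ Gauss--Bonnet yields $\int_{\Gamma'}(-2+|A|^2)\,d\Gamma = 4(H^2-1)|\Gamma'|$. The sign of this quantity is \emph{exactly} the sign of $H^2-1$; your claimed bound $\int_{\Gamma'}(-2+|A|^2)\,d\Gamma \geq 2|\Gamma'|$ ``whenever $|A|^2\geq 0$'' is false (any umbilic CMC surface with $H=1$ has $-2+|A|^2\equiv 0$ pointwise). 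So for $H\leq 1$ your test-function computation produces no contradiction at all. The paper supplies the missing input by first \emph{classifying} the hypothetical component: a CMC sphere is a geodesic sphere by Hopf's theorem, and a (volume-preserving) stable CMC torus is a tube about a geodesic by Ritor\'e--Ros; both model surfaces have $H>1$, and since the mean curvature of an isoperimetric boundary is the same constant on all components, one gets $-2+|A|^2\geq 2(H^2-1)>0$ pointwise on \emph{every} component. This classification also gives disconnectedness for free (a geodesic sphere or tube is null-homologous, hence cannot be all of $\partial\Omega$, which is homologous to $\partial M$), after which the two-component test function $f=|\Gamma_2|$ on $\Gamma_1$, $f=-|\Gamma_1|$ on $\Gamma_2$, $f=0$ elsewhere settles your sign-bookkeeping worry immediately.

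Two further points. Your topological exclusion of spheres is unjustified: homology only constrains the \emph{total} class $[\partial\Omega]=[\partial M]$, and nothing a priori prevents an isoperimetric region from having an extra null-homologous component (e.g.\ a small sphere bounding a ball far out in an end); there is no isotopy of $\partial\Omega$ to $\partial M$ to invoke, so spheres must be killed analytically, as the paper does. And your fallback via ``unconstrained stability'' of a single torus component fails as well: boundaries of isoperimetric regions are only volume-preserving stable, so you cannot test with $f\equiv 1$ on one component alone without compensating on another -- which puts you back in the multi-component setting where $H>1$ is indispensable.
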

\begin{proof}
	If $\partial \Omega$ contains a spherical or a torus component, then such component is either  a geodesic sphere by Hopf's Theorem or  a tube about a geodesic by Ritor\'{e}-Ros \cite{RR}. In particular, $\partial \Omega$ is disconnected and its mean curvature satisfies $H>1$, see equation (\ref{mean curvature tubes}) in Subsection 6.3. Let us choose two components $\Gamma_1$ and $\Gamma_2$. We consider the function $f\in C^{\infty}(\Gamma)$, such that $f=|\Gamma_2|$ on $\Gamma_1$ and $f=-|\Gamma_1|$ on $\Gamma_2$ and $f=0$ otherwise. Hence, $\int_{\Gamma} fd_{\Gamma}=0$. By the stability inequality we have
	\begin{eqnarray*}
		0&\leq& -\int_{\Gamma}fLf\,d_{\Gamma} = - |\Gamma_2|^2\int_{\Gamma_1} (-2+ |A|^2) 
		- |\Gamma_1|^2 \int_{\Gamma_2} (-2+ |A|^2) \\ 
		&\leq& -2\bigg(|\Gamma_1|^2|\Gamma_2|+ |\Gamma_2|^2|\Gamma_1|\bigg)(-1+H^2)<0.
	\end{eqnarray*}
With this contradiction, we conclude that each component of $\partial \Omega$ has genus at least two. If we minimize area in the isotopy class of $\partial \Omega$ with respect to the model metric where the outermost region is totally geodesic following \cite{MSY}, then we obtain the unique connected   minimal surface $S$ with multiplicity $m$ such that $ g(\partial M)=mg(S)\leq g(\partial \Omega)$. Since each component of $\partial \Omega$ has genus greater than one, we obtain $\chi(\partial M)\geq \chi(\partial \Omega)$.
\end{proof}

\noindent In order to study the behavior of the isoperimetric profile, it is convenient to define the Hawking mass function, see \cite{CESZ,JSZ,LN}.
 
 \begin{definition}
 	The Hawking mass function $m_H: (0,,\infty)\rightarrow \mathbb{R}$ is defined in terms of the outermost  isoperimetric profile $I_M$:
 	\begin{equation}\label{hawking mass}
 	m_H(V)= \sqrt{I_M(V)}\bigg(2\pi\chi(\partial M) + I_M(V) - \frac{1}{4}I_{M}^{\prime +}(V)^2\, I_M(V)\bigg).
 	\end{equation}
 \end{definition}

\noindent The quantity $m_H(V)$ is monotone and has a sign as  shown below. Theses facts will be useful in the description and comparison for  the isoperimetric profile in Section 4.

\begin{lemma}\label{monotonicity}
	The Hawking mass function $m_H(V)$  is a monotone non-decreasing function.
\end{lemma}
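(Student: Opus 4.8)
The plan is to show that $m_H(V)$ is non-decreasing by computing its (weak) derivative and verifying that it is non-negative, using the differential inequality \eqref{second derivative} for the second derivative of the profile $I_M$. First I would abbreviate $I = I_M$ and write $\chi = \chi(\partial M)$. For values of $V$ at which $I$ is twice differentiable (which is almost every $V$, and suffices by monotonicity being a measure-theoretic statement plus continuity of $m_H$), I would differentiate
\begin{equation*}
m_H(V) = \sqrt{I}\Big(2\pi\chi + I - \tfrac14 (I')^2\, I\Big).
\end{equation*}
Writing $m_H = I^{1/2} F$ with $F = 2\pi\chi + I - \tfrac14 (I')^2 I$, the derivative is $m_H' = \tfrac12 I^{-1/2} I' F + I^{1/2} F'$, where $F' = I' - \tfrac14\big(2 I' I'' \cdot I + (I')^2 I'\big) = I'\big(1 - \tfrac14 (I')^2\big) - \tfrac12 I' I'' I$. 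The key move is to substitute the bound on $I''$ coming from \eqref{second derivative}, namely $I^2 I'' \le -\int_\Gamma (Ric_g(N,N) + |A|^2)\,d\Gamma$; in the hyperbolic setting $Ric_g(N,N) = -2$, so the integrand is $|A|^2 - 2$, and I would rewrite $|A|^2 = H^2 + |\mathring A|^2$ where $\mathring A$ is the traceless second fundamental form and $H$ the (scalar) mean curvature satisfying $I' = 2H$ (from \eqref{first derivative} at points of differentiability).

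Next I would assemble these pieces. Using $\int_\Gamma d\Gamma = |\Gamma| = I$, $\int_\Gamma H^2 = H^2 I = \tfrac14 (I')^2 I$, and the Gauss equation together with Gauss–Bonnet to convert $\int_\Gamma |\mathring A|^2$ into curvature and topological terms (this is where the Euler characteristic $\chi(\partial M)$ enters, via $\int_\Gamma K\,d\Gamma = 2\pi\chi$), the inequality $I^2 I'' \le -\int_\Gamma(|A|^2 - 2)$ should produce, after clearing the $I^{1/2}$ and $I'$ factors, exactly the statement $m_H' \ge 0$. Concretely, I expect the computation to collapse to
\begin{equation*}
m_H'(V) \;=\; \frac{I'}{2\sqrt{I}}\,\Big(2\pi\chi + I - \tfrac14 (I')^2 I\Big) + \sqrt{I}\,F'(V)
\end{equation*}
and after inserting the $I''$ bound and the Gauss–Bonnet identity, the negative contributions should cancel, leaving a manifestly non-negative multiple of $\int_\Gamma |\mathring A|^2\,d\Gamma \ge 0$ (and a term proportional to $I'(1-H^2)$ or similar with a definite sign). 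Since $I' \ge 0$ by Lemma \ref{profile increasing}, the sign is controlled.

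The main obstacle I anticipate is twofold. First, one must be careful that \eqref{second derivative} is only a \emph{weak} (comparison-function) inequality, so the argument cannot naively differentiate $m_H$ pointwise; the clean way is to show $m_H$ is weakly monotone by testing against the comparison function $g \ge I$ guaranteed by \eqref{second derivative}, or equivalently to establish the differential inequality $m_H' \ge 0$ in the same weak/barrier sense and then invoke that a continuous function with non-negative weak derivative is non-decreasing. Second, the bookkeeping with the Gauss equation is delicate: one needs $Ric_g(N,N) + |A|^2 = -2 + H^2 + |\mathring A|^2$ and must correctly identify which combination $F'$ produces so that the $|\mathring A|^2 \ge 0$ term survives with the right sign while the $2\pi\chi$ and $I$ terms recombine into the Gauss–Bonnet identity. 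Getting the algebra to close precisely — so that every term except a non-negative remainder cancels — is the crux, and it is the reason the specific coefficient $\tfrac14$ and the factor $\sqrt{I}$ appear in the definition \eqref{hawking mass}.
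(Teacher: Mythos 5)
Your overall strategy---differentiate $m_H=\sqrt{I}\,F$, insert the weak bound on $I''$, convert curvature terms via the Gauss equation and Gauss--Bonnet, and invoke $I'\geq 0$ from Lemma \ref{profile increasing}---is the same as the paper's, and the algebra does collapse, but not quite as you predict: after substituting the bound \eqref{second derivative profile}, the terms $3/I$ and $3(I')^2/(4I)$ cancel \emph{identically} (the $|\mathring{A}|^2$ term is simply discarded from the upper bound on $I''$, where its sign is favorable), and what survives is $m_H'(V)\geq \tfrac{I'I^{3/2}}{I^2}\big(\pi\chi(\partial M)-\pi\chi(\Gamma)\big)$; no non-negative multiple of $\int_\Gamma |\mathring{A}|^2$ and no $I'(1-H^2)$-type term remains. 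This exposes the genuine gap in your proposal: Gauss--Bonnet yields $\int_\Gamma K\,d\Gamma = 2\pi\chi(\Gamma)$, the Euler characteristic of the \emph{isoperimetric boundary}, whereas the Hawking mass is defined with $2\pi\chi(\partial M)$. You silently identify the two (``this is where $\chi(\partial M)$ enters, via $\int_\Gamma K\,d\Gamma = 2\pi\chi$''), but $\Gamma$ is a priori neither connected nor diffeomorphic to $\partial M$, and since $\chi(\partial M)<0$ the inequality $\chi(\Gamma)\leq\chi(\partial M)$ is precisely what must be proved. The paper supplies this with two inputs absent from your sketch: Lemma \ref{components are hyperbolic}, which uses stability to rule out spherical and toroidal components, so every component of $\Gamma$ has genus at least two and contributes negatively to $\chi(\Gamma)$; and the homological fact that $\Gamma$, being homologous to $\partial M$, contains components whose total genus is at least $\sum_i g(\partial M_i)$. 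Without these ingredients the final sign is not controlled and the computation does not close.

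A secondary problem is your regularity discussion: you assert continuity of $m_H$, which is false in general. The one-sided derivatives of $I$ can disagree at corners of the profile, and then $m_H$ (built from $I'^{+}$) jumps there, so ``continuous with non-negative weak derivative'' does not apply as stated. The paper's fix is short but essential: by \eqref{first derivative} one has $I'_+(V)\leq 2H\leq I'_-(V)$, and since both are non-negative this gives $(I'_+)^2\leq (I'_-)^2$, so the term $-\tfrac14 (I')^2 I$---hence $m_H$---can only jump \emph{up} at a corner; combined with $m_H'\geq 0$ wherever defined (in the comparison-function sense you correctly describe), monotonicity follows. Finally, a bookkeeping slip worth fixing: with the paper's normalization $I'=2H$ (so $H$ is the average of the principal curvatures), the correct identities are $|A|^2=2H^2+|\mathring{A}|^2$ and $Ric(N)+|A|^2=-3-K+3H^2+\tfrac12|\mathring{A}|^2$, not $-2+H^2+|\mathring{A}|^2$; with your version the exact cancellation above would fail.
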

\begin{proof}	
	Let $\Omega$ be an isoperimetric region of volume $V$  and  $\Gamma=\partial \Omega$.
	Recall the Gauss equation 
		\begin{eqnarray}\label{Gauss eq}
	Ric(N)+|A|^2= \frac{R}{2} -K +3H^2 +\frac{|\mathring{A}|^2}{2}.
	\end{eqnarray}
	 Using (\ref{Gauss eq}) we can rewrite inequality (\ref{second derivative area}) as follows
	\begin{eqnarray}\label{second derivative profile}
	I_M^{\prime\prime}(V)\leq \phi_V^{\prime\prime}(V)&\leq & \frac{1}{\phi_V(V)^2}\int_{\Gamma}\bigg( 3 +K -3H^2 -\frac{|\mathring{A}|^2}{2}\bigg) d_{\Gamma} \nonumber \\
	&\leq&\frac{\sum_{i=1}^{l}4\pi(1-g(\Gamma_i))}{I(V)^2} + \frac{3}{I(V)} - \frac{3I^{\prime}(V)^2}{4I(V)}.
	\end{eqnarray}
	The last inequality follows from the Gauss-Bonnet Theorem applied to each component of $\Gamma$. Next we compute $m_H^{\prime}$ in the distribution sense  
	  following similar computation in  \cite[Lemma 3]{E1}: 
	  
	  For this we need the quotient difference operator $\Delta_{\delta}f(V)=\delta^{-1}\big(f(V+\delta)-f(v)\big)$, $\delta\neq 0$. Let $\varphi\in C^1_0{(0,\infty)}$, then	\begin{eqnarray*}
	  -	\int \varphi^{\prime}\,m_H &=&  -\lim_{\delta \rightarrow 0}\int \varphi^{\prime} \sqrt{I_M}\bigg(2\pi\chi(\partial M) + I_M - \frac{1}{4}(\Delta_{\delta}I_M)^2\, I_M\bigg)\\
	  -	\int \varphi^{\prime}\,m_H &=&  \lim_{\delta \rightarrow 0}\int \varphi \Delta_{-\delta} \bigg( \sqrt{I_M}\big(2\pi\chi(\partial M) + I_M - \frac{1}{4}(\Delta_{\delta}I_M)^2\, I_M\big)\bigg) \\
	  		-\int \varphi^{\prime}\,m_H&=& \lim_{\delta \rightarrow 0}\int \varphi\bigg(I_M^{\prime} -  \frac{1}{2}I_M\,I_M^{\prime}\,\Delta_{-\delta}(\Delta_{\delta} I_M)- \frac{(I_M^{\prime})^2}{4}I_M^{\prime} \bigg)\sqrt{I_M} \\
	  			&&\,\,\,+\,\,\,\varphi \bigg(\pi\chi(\partial M)+ \frac{I_M}{2} - \frac{(I_M^{\prime})^2}{8} \,I_M  \bigg)\frac{I_M^{\prime}}{\sqrt{I_M}}\\
		&=&  \int \varphi \frac{I_M^{\prime}\,I_M^{\frac{3}{2}}}{2}\bigg(\frac{(I_M^{\prime})^2}{4I_M} + \frac{2\pi\chi(\partial M)}{I^2} \bigg) - \lim_{\delta \rightarrow 0} \int \varphi\frac{I_M^{\prime}\,I_M^{\frac{3}{2}}}{2} \Delta_{-\delta}(\Delta_{\delta}I_M).
	\end{eqnarray*}
we used in above formulas that $I_M^{\prime+}=I_M^{\prime-}$ except at possibly countable many points. Using that $\limsup_{\delta \rightarrow 0}\Delta_{-\delta}\Delta_{\delta}I_M\leq \phi_V^{\prime\prime}(V)$ and
	applying inequality \ref{second derivative profile}, we obtain:
	\[
-\int \varphi^{\prime}\,	m_H^{\prime} \geq \int \varphi \frac{I_M^{\prime}(V)\,I_M^{\frac{3}{2}}(V)}{I_M^2(V)} \bigg(\pi\chi(\partial M) - \sum_{i=1}^{l_V}2\pi(1-g(\Gamma_i)) \bigg)\geq0,
	\]
by  Lemma \ref{profile increasing} and Lemma \ref{components are hyperbolic}. Hence, $m_H^{\prime}(V)\geq 0$ in the distribution sense. It follows from (\ref{first derivative}) that at every discontinuity point of $I_M^{\prime}(V)$, the Hawking mass $m_H(V)$ jumps up. Therefore, $m_H(V)$ is monotone non-decreasing.	
\end{proof}

\begin{remark}
If  $M_e$ is a connected component of $M-\Omega_0$, then  the isoperimetric problem for this end is defined as follows. In the class $\mathcal{R}_V^0=\{\Omega: \Omega\subset M_e\,\, \text{is a compact region with}\,\, \partial M_e\subset \Omega\,\, \text{and}\,\, vol(\Omega)=V  \}$,  we set the  isoperimetric profile $I_e$ as
\[
I_e(V)= \inf\{area(\partial\Omega):\,\,\Omega\in \mathcal{R}_V^0 \}\,\,-\,\, area(\partial M_e)
\]
The results in this section  and their proofs  extended naturally to the   profile $I_e(V)$. The  only relevant change needed is in the definition of the Hawking mass $m_H(V)$.
\end{remark}

\section{Renormalized volume}	\label{sec:VR}
The convex co-compact hyperbolic $3$-manifolds $(M,g)$ are particular examples of conformally compact manifolds. Namely, there exists  on a compact manifold with boundary $\overline{M}$ a defining function $x$ (i.e., $x>0$ on  $ M$, $x=0$ on $\partial M$, and $dx\neq 0$ on $\partial M$)  such that $M=int(\overline{M})$ and the conformal metric  $\overline{g}=x^2g$ extends smoothly to the boundary $\partial M$. The restriction of $\overline{g}$ to $\partial M$ defines a well defined conformal structure $[\partial M]$ on $\partial M$ which we will refer to as the conformal infinity (or conformal boundary) of $M$.

By  works of Epstein \cite{E} and Graham \cite{G},  for each metric $h$ on the conformal infinite $[\partial M]$ there exist a unique  defining function $x$ defined in a collar neighborhood of $\partial M$ with the special property that the level sets $\{x=r\}$ yield an equidistant foliation of the ends of $M$ by convex sets. Moreover,
\[
g=\frac{1}{x^2}\,\big(dx^2 + h_0 + h_2 x^2 + h_3 x^3 + \ldots\big),
\]
where $h_i$ are tensors in $\partial M$.
We now look for the quantity $Vol(\{x>\varepsilon\})$ as $\varepsilon$ approaches zero. One can check that
\[
Vol(\{x>\varepsilon\})=  c_0\, \varepsilon^2 - L \log(\varepsilon) + V + o(1).
\]
The constants $c_0$ e $L$  depend only on the metric $h\in[\partial M]$ uniquely associated to the equidistant foliation $\{x=\varepsilon\}$. The quantity $V$ is the renormalized volume associated to the metric $h$.

In the context of hyperbolic metrics, the work \cite{KS} provided a renormalization procedure for computing $V$ in terms of the geometric data associated to equidistant foliation was proposed. Let  $M_r$, $r\geq 0$, be an equidistant  foliation at infinity of $M$   by convex sets.  This foliation induces a Riemannian metric at the conformal infinity $\partial M$ as follows:
\begin{eqnarray}\label{metric at infinity}
h=\lim_{r\rightarrow \infty}\, e^{-2r} g_r,
\end{eqnarray}
where $g_r$ is the induced Riemannian metric on $\partial M_r$. Using the unique correspondence between metrics in the conformal class $[\partial M]$ and equidistant foliations as above \cite{G}, we define the\textit{W-volume} with respect to $h$ as
\begin{equation}\label{RenVol}
W(M,h) := \text{vol}(M_r)- \frac{1}{2} \int_{\partial M_r} H\, da + r\pi \mathcal{X}(\partial M).
\end{equation}
One can show that  the value on the right hand side is independent of $r$, see \cite{KS}. Among all metrics $h\in [\partial M]$ of fixed area, say $A_0=Area(\partial M, h)$, the W-volume is maximized by the unique hyperbolic metric $h_0$ having that area, see  Proposition \ref{polyakov} in the Appendix.  This motivates the following:
\begin{definition}The Renormalized volume  of $M$ is defined as $V_R(M)= W(M, h_{hyp})$ where $h_{hyp}$ is the hyperbolic metric of Gauss curvature $-4$.
	\begin{example}[Renormalized volume of Fuchsian manifolds]
Let $(M,g)$ be a Fuchsian $3$-manifold. Recall that $M=\Sigma\times \mathbb{R}$ where $\Sigma$ is a closed orientable surface of genus $g$ and \[
g= dr^2 + \cosh^2(r)g_{\Sigma}
\]
where $g_{\Sigma}$ is the hyperbolic metric (Gauss curvature $-1$) on $\Sigma$. With the warped metric $g$ one can check that  the slices $\Sigma_r$ form a global equidistant foliation of $\Sigma\times \mathbb{R}$ by totally umbilical surfaces. A simple computation gives that the metric at infinity via the limit procedure (\ref{metric at infinity}) is the hyperbolic metric $h_{hyp}$ of Gauss curvature $-4$. Because the W-volume $W(\Sigma\times \mathbb{R},g_{hyp})$ does not depend of the choice of slice $\Sigma_r$, we obtain that $V_r(M)=0$ by choosing $M_r=\Sigma\times[-r,r]$ with $r$ sufficiently closed to zero in (\ref{RenVol}).
	\end{example}
\end{definition}The following theorem studies the minimum of $V_R$ along a moduli class of hyperbolic $3$-manifolds.
\begin{theorem} [Bridgeman-Brock-Bromberg \cite{BBB}, Vargas Pallete \cite{P}] \label{Comparison Renormalized volume}
	Let  $M$ be a convex co-compact hyperbolic $3$-manifold. Then $V_R(M)\geq \frac{v_3}{2}\Vert DM \Vert$, where $v_3$ is the volume of a regular ideal tetrahedra, $\Vert.\Vert$ denotes the Gromov norm of a 3-manifold, and $DM$ is the double of $M$. Moreover, equality $V_R(M) = \frac{v_3}{2}\Vert DM \Vert$ occurs if and only if the convex core of $M$ has totally geodesic boundary, in which case $M$ needs to be either acylindrical with Fuchsian ends or Fuchsian.
\end{theorem}

 \section{Foliation at infinity for hyperbolic ends}\label{sec:foliationatinfinity}

\noindent As discussed above,  for any  metric $h$ in the conformal boundary $\partial M$ there exists the unique special    defining function $x$ whose level sets form a equidistant foliation of the ends of $M$ and we write \[g=\frac{1}{x^2}\,\big(dx^2 + h_0 + h_2 x^2 + h_3 x^3 + \ldots\big),\] 
where  $h_{j}$ are tensors on $\partial M$. It is known that the tensor $h_2$ is formally undetermined but it satisfies $tr_{h_0}h_2=K$ where $K$ is the scalar curvature of $h_0$. The mean curvature of $\Sigma_r$ satisfies
 \[
 H_r= 1 + \frac{tr_{h_0}h_2}{4}\, r^2 + o(r^2).
 \]
 In particular, if $h_0$ has negative Gauss curvature, which is always possible since $\chi(\partial M)<0$ at each component, then  $H_r$ is almost constant and  $H_r<1$ when $r$ is sufficiently close to $0$.

It is possible to perturb these level sets  to  to  have constant mean curvature. A direct application of the main result of \cite{MP} gives:

 \begin{theorem}[Mazzeo-Pacard \cite{MP}]
 	Let $M$ be a convex co-compact hyperbolic $3$-manifold with $\chi(\partial M)<0$. There exists a compact subset $K$ such that $M-K$ has an unique monotone increasing  foliation by  constant mean curvature surfaces.
 \end{theorem}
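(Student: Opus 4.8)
The plan is to realize the CMC leaves as small normal graphs over the equidistant surfaces $\Sigma_r=\{x=r\}$ and to solve the resulting prescribed–mean–curvature equation by the implicit function theorem, uniformly as $r\to 0$. Concretely, for a function $w\in C^{2,\alpha}(\Sigma_r)$ let $S_w=\{\exp_p(w(p)\,N_p):p\in\Sigma_r\}$ be the normal graph with unit normal $N$, and consider the map $\Phi_r(w,c)=H(S_w)-c$, where $H(S_w)$ is the mean curvature of $S_w$. I would seek, for each admissible constant $c$ in an interval $(1-\delta,1)$, a solution $w=w_r(c)$ of $\Phi_r(w,c)=0$; the surfaces $\{S_{w_r(c)}\}$ are then the CMC leaves, and $c$ will serve as the parameter for the foliation. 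The first point to record is that the unperturbed surface is already almost CMC: if one takes $h_0$ to be the uniformizing metric of constant curvature $-1$ (available precisely because $\chi(\partial M)<0$), then $tr_{h_0}h_2=K\equiv-1$ is constant, so by the expansion $H_r=1-\tfrac14 r^2+o(r^2)$ has oscillation $o(r^2)$. Hence $\Phi_r(0,\bar H_r)$ has $C^{0,\alpha}$-norm tending to $0$, and $w=0$ is a good approximate solution.

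The heart of the matter is the linearized operator $D_w\Phi_r(0,c)=\mathcal{L}_r=\Delta_{\Sigma_r}+\big(|A_r|^2+Ric(N,N)\big)$. Since $M$ is hyperbolic one has $Ric(N,N)=-2$, while the equidistant surfaces near infinity are nearly umbilic with $H_r<1$, so the potential $q_r=|A_r|^2-2$ is strictly negative for small $r$ and tends to $0$ as $r\to 0$. For each fixed $r$ the strict negativity of $q_r$ makes $\mathcal{L}_r$ invertible by the maximum principle (if $\mathcal{L}_r\psi=0$ then at an interior maximum $q_r\psi\ge 0$ forces $\psi\le 0$ there, and symmetrically at a minimum, so $\psi\equiv 0$). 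The difficulty is that this inverse degenerates as $r\to 0$ precisely along the constant functions, because the limit operator is the Laplacian $\Delta_{\Sigma_r}$ whose kernel is the constants. I would therefore treat the construction by Lyapunov–Schmidt, splitting $C^{k,\alpha}(\Sigma_r)=\mathbb{R}\oplus C^{k,\alpha}_0(\Sigma_r)$ into constants and their $L^2$-orthogonal complement: on $C^{2,\alpha}_0$ the operator $\mathcal{L}_r$ is invertible with norm bounded uniformly in $r$ after rescaling $\Sigma_r$ by $r$ to bounded geometry converging to the fixed surface $(\partial M,h_0)$, whose first nonzero eigenvalue is a fixed positive number, while the constant direction is absorbed by the free parameter $c$. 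Granting this uniform invertibility, the implicit function theorem yields a unique small $w_r(c)\in C^{2,\alpha}_0$ for each $c\in(1-\delta,1)$, depending smoothly on $c$.

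It remains to assemble the leaves and establish monotonicity and uniqueness. Differentiating $H(S_{w_r(c)})=c$ in $c$, the lapse $\psi=\partial_c w_r(c)$ satisfies $\mathcal{L}_r\psi=1$, and since $q_r<0$ the same maximum-principle argument as above shows $\psi$ has a strict sign: at an interior maximum $1=\Delta\psi+q_r\psi\le q_r\psi$ forces $\psi\le 1/q_r<0$, so $\psi<0$ throughout. Thus the leaves move monotonically under the normal as $c$ varies; they are pairwise disjoint, nested, and sweep out a neighborhood $M-K$ of infinity, which is the asserted monotone increasing foliation. For uniqueness, any CMC surface contained in this neighborhood and sufficiently close to infinity is a normal graph over some $\Sigma_r$ with mean curvature in $(1-\delta,1)$, so the uniqueness clause of the implicit function theorem forces it to coincide with the corresponding leaf; alternatively a direct maximum-principle comparison of two such CMC surfaces shows they must agree.

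The main obstacle is the uniform invertibility of $\mathcal{L}_r$ as $r\to 0$. Because the operator degenerates to the bare Laplacian exactly on the leaves that run off to infinity, the estimate cannot be read off from the fixed-$r$ maximum principle; one must carry out the linear analysis in function spaces adapted to the degeneration — either weighted Hölder spaces on the flaring end or, equivalently, an explicit rescaling that brings $(\Sigma_r,g|_{\Sigma_r})$ to bounded geometry with a limit metric on $\partial M$. It is precisely the hypothesis $\chi(\partial M)<0$ that makes this work: it provides the constant-curvature uniformizing metric $h_0$ that kills the leading oscillation of $H_r$ and a fixed limit surface with a uniform spectral gap, so that the vanishing potential $q_r$ cannot generate kernel in the mean-zero subspace and the whole perturbation scheme closes up uniformly.
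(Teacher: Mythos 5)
The statement is not proved in the paper at all: it is quoted verbatim as ``a direct application of the main result of \cite{MP}'', so your sketch must be measured against the cited source rather than against any argument in the text. At that level your proposal tracks Mazzeo--Pacard's actual strategy faithfully: normal graphs over the level sets $\Sigma_r$ of the geodesic defining function attached to the uniformizing metric $h_0$ (this is exactly where $\chi(\partial M)<0$ enters), a perturbation argument made uniform as $r\to 0$, mean curvature as the foliation parameter, and the maximum principle for monotonicity and uniqueness. Your observation that constant $tr_{h_0}h_2$ makes the oscillation of $H_r$ of size $o(r^2)$, so that $\Sigma_r$ is a good approximate solution, is also correct.

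There is, however, a genuine misdiagnosis in your linear analysis, and it is worth correcting because it changes the architecture of the proof. You claim the inverse of $\mathcal{L}_r=\Delta_{\Sigma_r}+(|A_r|^2-2)$ ``degenerates as $r\to0$ precisely along the constant functions'' and set up a Lyapunov--Schmidt splitting to compensate. In fact the induced metric is $g_r=r^{-2}\big(h_0+O(r^2)\big)$, so $\Delta_{\Sigma_r}=r^2\big(\Delta_{h_0}+o(1)\big)$: before rescaling, \emph{all} eigenvalues of $\mathcal{L}_r$ are $O(r^2)$, not just the one in the constant direction. Meanwhile, writing $|A_r|^2=2H_r^2+|\mathring{A}_r|^2$ and using $H_r=1+\frac{tr_{h_0}h_2}{4}r^2+o(r^2)$ together with $|\mathring{A}_r|^2=O(r^4)$, the potential satisfies $q_r=|A_r|^2-2=(tr_{h_0}h_2)\,r^2+o(r^2)$, i.e.\ it vanishes at exactly the rate at which the Laplacian degenerates. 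Hence $r^{-2}\mathcal{L}_r\to \Delta_{h_0}+tr_{h_0}h_2$ with $tr_{h_0}h_2=K<0$ constant, and since $-\Delta_{h_0}\geq 0$ this limit operator is negative definite with spectrum bounded above by $K$: it is uniformly invertible on all of $C^{2,\alpha}$, constants included, with no kernel whatsoever. So no Lyapunov--Schmidt splitting is needed, the free parameter $c$ need not ``absorb'' anything, and the obstacle you single out as the heart of the matter dissolves after the natural $r^2$ rescaling (in \cite{MP} this is packaged in weighted spaces). This is consistent with the paper's Corollary \ref{H=1}, which shows the leaves are strongly stable, i.e.\ $2-|A|^2>0$ pointwise, so the quadratic form is negative definite with no mean-zero constraint. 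Your scheme therefore closes, but more easily than you anticipate; as written, the splitting is harmless but the stated justification for it is wrong.

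A smaller gap sits in your uniqueness step: the assertion that any CMC surface sufficiently close to infinity ``is a normal graph over some $\Sigma_r$'' with small norm requires a priori curvature and graphical estimates for embedded CMC surfaces in the end, which you do not supply, and without which the uniqueness clause of the implicit function theorem says nothing. The alternative you mention in one clause is the robust route, and it is the one the paper itself uses downstream (Corollary \ref{uniqueness}): a connected CMC surface homologous to $\partial M$ in the foliated region is tangent to an outermost and an innermost leaf, the maximum principle gives $H_A\leq H\leq H_B$, and strict monotonicity of the mean curvature along the foliation forces the surface to be a leaf. Note this argument needs the competitor to be connected and homologous to the boundary, so the class in which uniqueness holds should be stated explicitly; the same comparison also resolves the consistency issue you glossed over, namely that leaves constructed as graphs over different reference surfaces $\Sigma_r$ agree.
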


Given that the mean curvature is monotone increasing along the foliation, we obtain the following corollary concerning CMC surfaces in the foliated region.

\begin{corollary}\label{uniqueness}
	If $\Sigma$ is an connected constant mean curvature surface embedded in $M-K$  and homologous to $\partial (M-K)$, then $\Sigma$ is a leaf of the canonical foliation.
\end{corollary}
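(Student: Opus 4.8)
The plan is to run the classical maximum‑principle comparison between $\Sigma$ and the leaves, exploiting the strict monotonicity of the mean curvature along the foliation guaranteed by the statement preceding this corollary. Write the leaves of the canonical foliation as $\{\Sigma_s\}$, with $\Sigma_s$ of constant mean curvature $h(s)$ computed with respect to the unit normal $\nu_s$ pointing toward infinity, and normalize the parameter so that $s$ increases toward infinity; then $h$ is a strictly increasing function of $s$. Since $\Sigma$ is homologous to $\partial(M-K)$ it is two‑sided, and I orient it by the normal $N$ pointing toward infinity, writing $H_0$ for its constant mean curvature in that direction. Because $\Sigma$ is a closed surface contained in $M-K=\bigcup_s\Sigma_s$, the leaf‑parameter function $\tau:\Sigma\to\mathbb R$, sending a point to the value $s$ of the leaf through it, is smooth and, by compactness, attains a maximum $s_+$ at some $p_+$ and a minimum $s_-$ at some $p_-$, with $s_-\le s_+$.

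First I would record the one‑sided tangencies. At $p_+$ the surface $\Sigma$ is tangent to $\Sigma_{s_+}$ and, by maximality of $\tau$, lies entirely on the side $\{\tau\le s_+\}$, that is on the near (non‑infinity) side of $\Sigma_{s_+}$; symmetrically, at $p_-$ it is tangent to $\Sigma_{s_-}$ and lies on the far side $\{\tau\ge s_-\}$. Next I would apply the elementary comparison at these contact points. Writing both surfaces as graphs over their common tangent plane at $p_+$ with $\nu_{s_+}$ as the upward direction, the inclusion of $\Sigma$ on the near side forces the pointwise Hessian inequality $D^2u_\Sigma\le D^2u_{\Sigma_{s_+}}$ at the (critical) contact point; since the ambient normal and tangent plane agree there, the sign convention for mean curvature turns this into $H_0=H_\Sigma(p_+)\ge h(s_+)$. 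The identical computation at $p_-$, where $\Sigma$ now lies on the far side, reverses the Hessian inequality and yields $H_0=H_\Sigma(p_-)\le h(s_-)$.

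Combining the two estimates gives $h(s_+)\le H_0\le h(s_-)$, hence $h(s_+)\le h(s_-)$. On the other hand $s_-\le s_+$ together with the strict monotonicity of $h$ gives $h(s_-)\le h(s_+)$, so $h(s_-)=h(s_+)$, and by injectivity of $h$ this forces $s_-=s_+$. Thus $\tau$ is constant on $\Sigma$, so $\Sigma\subseteq\Sigma_{s_-}$; as $\Sigma$ is a nonempty closed surface sitting inside the connected leaf $\Sigma_{s_-}$ of the same dimension, it is open and closed in it, whence $\Sigma=\Sigma_{s_-}$ is a leaf of the foliation.

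I expect the main subtlety to be purely in the bookkeeping of orientations and signs: one must fix the normal direction once and for all (toward infinity), check that the inclusion of $\Sigma$ on the near or far side of the tangent leaf translates into the correct direction of the Hessian—hence mean‑curvature—inequality, and verify that this direction is compatible with the monotonicity of $h$ so that the two extremal inequalities genuinely pinch $s_-=s_+$. A secondary point worth stating is the compactness of $\Sigma$ (so that $\tau$ attains its extrema and the extremal leaves $\Sigma_{s_\pm}$ exist), which is precisely why only the tangential comparison at $p_\pm$ is needed, and one can avoid a global sliding argument or an appeal to the strong maximum principle.
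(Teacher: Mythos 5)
Your proof is correct and is essentially the paper's argument: the paper likewise traps $\Sigma$ between the outermost and innermost leaves it touches (its ``outer radius'' and ``inner radius'' leaves), gets $h(s_+)\le H_0\le h(s_-)$ from the maximum principle at the two tangency points with matching normal orientations, and concludes $s_-=s_+$ from the strict monotonicity of the mean curvature along the foliation. Your write-up simply makes explicit the Hessian comparison and the orientation/sign bookkeeping that the paper leaves implicit.
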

\begin{proof}
 Note that $\Sigma$ is tangent to the outer radius  leaf $\partial \Omega_A$ and the inner radius leaf $\partial\Omega_B$  of the canonical foliation with mean curvature vectors having the same orientations. By the Maximum Principle,  $H_A\leq H\leq H_B$. Since the foliaton's mean curvature  is strictly increasing for large volumes, we have  that $\Sigma=\partial \Omega_A=\partial \Omega_B$.
\end{proof}

 \noindent The next two results strengthen the variational characterization of each leaf of the canonical foliation. Corollary \ref{H=1} will be used in the proof of Theorem \ref{foliation is isoperimetric}.

\begin{proposition}\label{Brane Functional}
There exist an embedded strongly stable constant mean curvature closed surface $\Sigma$ with mean curvature $H$ for each $H\in(0,1)$. 
\end{proposition}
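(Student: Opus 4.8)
The plan is to realize $\Sigma$ as the free boundary of a minimizer of the \emph{brane functional}
\[
\mathcal{F}_H(\Omega)\;=\;\mathrm{Area}(\partial\Omega)\;-\;2H\,\mathrm{Vol}(\Omega),
\]
minimized over compact regions $\Omega$ with $\Omega_0\subseteq\Omega$. Subtracting the volume term is what makes this work: its first variation in the outward normal direction $\phi N$ is a positive multiple of $\int_{\partial\Omega}(H_{\partial\Omega}-H)\phi\,d\Gamma$, so that interior critical points are exactly surfaces of constant mean curvature $H_{\partial\Omega}\equiv H$; and at such a critical point the second variation equals $\int_{\Sigma}|\nabla f|^2-(-2+|A|^2)f^2\,d\Sigma$ \emph{for every} $f$, with no constraint $\int_{\Sigma}f=0$, because the volume term absorbs the Lagrange multiplier. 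Consequently any local minimizer of $\mathcal{F}_H$ is automatically strongly stable, which is precisely the conclusion sought.

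To run a direct method I first trap the competitors between two barriers. The inner barrier is the outermost minimal surface $\partial\Omega_0$: since its mean curvature is $0<H$, the first variation shows that pushing outward strictly decreases $\mathcal{F}_H$, so $\Omega_0$ is not a minimizer, and by the strong maximum principle no minimizing boundary can touch $\partial\Omega_0$. The outer barrier is furnished by the canonical foliation of the ends: fixing $H\in(0,1)$, I choose a leaf $L$ of the Mazzeo--Pacard foliation \cite{MP} with mean curvature $H_L\in(H,1)$, which exists because the leaf mean curvatures tend to $1$ as one exhausts the ends. Since mean curvature increases monotonically toward infinity along the foliation, every leaf farther out than $L$ has mean curvature exceeding $H_L>H$; hence any competitor reaching past $L$ can be strictly reduced in $\mathcal{F}_H$ by retracting to $L$, confining minimizers to the fixed compact region $\Omega_L$ bounded by $\partial\Omega_0$ and $L$.

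With competitors trapped in the compact region $\Omega_L$, the volume term is bounded and $\mathcal{F}_H$ is bounded below, so a minimizing sequence of finite-perimeter sets subconverges by $BV$-compactness to a minimizer $\Omega_H$; since the volume term is a bounded perturbation of perimeter, $\partial\Omega_H$ is an almost-minimizing boundary, hence a smooth embedded surface in this three-dimensional setting. The two barrier estimates force the free part $\Sigma$ of $\partial\Omega_H$ to lie strictly between $\partial\Omega_0$ and $L$, so $\Sigma$ is a genuine interior critical point: it is an embedded constant mean curvature surface with $H_\Sigma\equiv H$, and being a minimizer of $\mathcal{F}_H$ it is strongly stable. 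I expect the main obstacle to be exactly this confinement-and-free-boundary step, namely ruling out that the minimizer escapes into the end or collapses onto the minimal core; the monotonicity of the mean curvature along the Mazzeo--Pacard foliation (together with Corollary~\ref{uniqueness}) is the essential geometric input that makes the outer barrier effective.
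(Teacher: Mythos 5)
Your proposal is correct and takes essentially the same approach as the paper: your functional $\mathrm{Area}(\partial\Omega)-2H\,\mathrm{Vol}(\Omega)$ is just $-2H$ times the paper's brane functional $\mathcal{F}_H(\Omega)=|\Omega|-\tfrac{1}{2H}|\partial\Omega|$, so your minimization is the paper's maximization, with the same Mazzeo--Pacard leaves used as barriers (via Meeks--Simon--Yau type arguments \cite{MSY}) and the same unconstrained second variation giving strong stability. The only differences are expository: you make explicit the inner barrier at $\partial\Omega_0$ and the $BV$-compactness and regularity details that the paper delegates to the references.
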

\begin{proof}
	We consider sets $\Omega$ such that $\partial\Omega=\Gamma$ is homologous to $\partial M$. In this class, let $\mathcal{F}_H$ be the brane action functional $\mathcal{F}_H(\Omega)=|\Omega|- \frac{1}{2H}|\Gamma|$, see \cite{ACG}. Recall that the volume element of $M$ satisfies $dM=d\Lambda$ for some $n-1$ form $\Lambda$. In particular, $\mathcal{F}_H$ is a functional of $\Gamma$  and $\mathcal{F}_H(\Gamma)=\int_{\Gamma} \Lambda  - \frac{1}{2H}A(\Gamma)$ where $A(\Gamma)$ is the area of $\Gamma$. Note  the form of $\mathcal{F}_H$ is to agree with the  inequality $V-\frac{1}{2}A>0$ for large volumes $V$, see  Lemma \ref{cheeger} below.  We remark that we do not have a sign for $\mathcal{F}_H$. If  $\Omega_r$ is such that  $\Gamma_r$ is a connected CMC leaf of the canonical foliation far out at infinity, then $\Gamma_r$ is a barrier for $\mathcal{F}_H$ in that sense that  \begin{eqnarray*}
		\delta \mathcal{F}_H(\Omega_r)(fN)= \int_{\Gamma}(-1+\frac{H_r}{H})f\,d\Gamma > 0\end{eqnarray*}
	where $H_r$ is the mean curvature of $\Gamma_r$ with respect to  the inward unit normal vector $N$ of $\Omega_r$ and $f$ is a positive function. In other words, the functional $\mathcal{F}_H$ decreases as one approaches $\partial M$ (in fact, $\lim_{r\rightarrow \infty} \mathcal{F}_r(\Omega_r)=-\infty$.)
	We consider the  maximization problem $\sup \{	\mathcal{F}_H(\Omega)\}$,  $\Omega$  as described above,  inside the compact set $\Omega_r$.
	By  maximization arguments using barriers, see \cite{MSY},   there exist a    maximizer $\Omega_H$ for the functional $\mathcal{F}_H$ which does not intersect $\partial \Omega_r$. We need to show that $\Omega$ is non-trivial, i.e., $\Omega$ has non-zero volume: this only happens if $\Omega=S$ where $S$ us a surface homologous to one component of $\partial M$. In particular, it must be the homological non-trivial surface of least area in $M$. On the other hand these surfaces are also barrier for maximizing the functional $\mathcal{F}_H$. Indeed, \begin{eqnarray*}
			\delta \mathcal{F}_H(\Omega)(fN)= \int_{\Gamma}(1-\frac{0}{H})f\,dS >0 \end{eqnarray*} where $f$ is a positive function on $S$ and $N$ is the outward unit normal vector to $\Omega$. Moreover, the first and second variation for  $\Gamma_{\Omega_H}$, see \cite{ACG}, implies it has constant mean curvature $H$ and
	\begin{eqnarray}\label{brane stability}
	\delta^2 \mathcal{F}(\Omega_H)(f,f)=-\frac{1}{2H}\int_{\Gamma}|\nabla f|^2 + (2-|A|^2)f^2\,d_{\Gamma_H}\leq 0,
	\end{eqnarray}
for all function $f\in C^{\infty}(\Gamma)$.	In other words, the surface $\Gamma_H$ is a strongly stable constant mean curvature surface.
\end{proof}
This construction can be made in each end of $M-\Omega_0$, where $\Omega_0$ is the outermost region.
\begin{corollary}\label{H=1}
	There are no embedded cmc  surface $\Sigma\subset M-\Omega_0$   homologous to $\partial M$ with constant mean curvature $H=1$. 
\end{corollary}
\begin{proof}
Let $\Omega$ be the region bounded by  $\Sigma$.   Following Proposition \ref{Brane Functional}, we can use $\Sigma$ as a barrier for maximizing  the brane action functional $\mathcal{F}_{H=1}$ inside $\Omega$ among competitors homologous to $\Sigma$. Note that the maximizer $\Gamma$ is non-trivial by the same argument in Proposition \ref{Brane Functional} (in other words, $\Gamma$ must enclose some volume.) Consequently, we obtain  that either $\Sigma$ is strongly stable (in the sense (\ref{brane stability})) or we can replace $\Sigma$  by a strongly stable compact  surface $\Gamma$ with constant mean curvature $H=1$.  Since $H=1$, the Jacobi operator becomes $L=\Delta + |\mathring{A}|^2$. Therefore, applying the test function $f=1$ in the stability inequality (\ref{brane stability}) yields a contradiction. 
\end{proof}

\section{Uniqueness of isoperimetric regions}\label{sec:uniquenessisomperimetric}

We start with a simple lemma providing an useful inequality between the enclosed volume and area of equidistant surfaces.

\begin{lemma}\label{cheeger}
	Let $\Omega$ be a strongly convex set in a convex co-compact hyperbolic $3$-manifold $M$ such that $\partial \Omega$ is homologous to $\partial M$.  Then $\text{Area}(\partial \Omega_r)<2\,vol(\Omega_r)$, where $\Omega_r=\{x\in M: d(x,\Omega)\leq r\}$ and $r$ sufficiently large.
\end{lemma}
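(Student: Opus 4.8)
The plan is to flow the convex surface $\Sigma_0=\partial\Omega$ outward by the equidistant (unit-speed normal) flow and to track the area $A(r)=\mathrm{Area}(\partial\Omega_r)$ and the volume $V(r)=\mathrm{vol}(\Omega_r)$ as functions of $r$. Because $\Omega$ is strongly convex and $M$ is convex co-compact, the outward normal exponential map has no focal points and maps onto the end; hence $\Sigma_r=\partial\Omega_r$ is a smooth embedded surface isotopic to $\partial M$ for every $r\ge 0$, so that $\chi(\Sigma_r)=\chi(\partial M)$ and all the variation formulas below are valid. The first variation of volume gives $V'(r)=A(r)$.

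First I would record the evolution of the geometry of $\Sigma_r$. Writing $\kappa_1,\kappa_2$ for the principal curvatures with respect to the outward normal, the Riccati equation in curvature $-1$ reads $\kappa_i'=1-\kappa_i^2$, while the area element evolves by the factor $(\kappa_1+\kappa_2)$. Thus $A'(r)=\int_{\Sigma_r}(\kappa_1+\kappa_2)\,d\Sigma_r$, and differentiating once more,
\begin{equation*}
A''(r)=\int_{\Sigma_r}\big((\kappa_1'+\kappa_2')+(\kappa_1+\kappa_2)^2\big)\,d\Sigma_r=\int_{\Sigma_r}\big(2+2\kappa_1\kappa_2\big)\,d\Sigma_r.
\end{equation*}
Now the Gauss equation gives $\kappa_1\kappa_2=K_{\Sigma_r}+1$, and Gauss--Bonnet applied to $\Sigma_r$ gives $\int_{\Sigma_r}K_{\Sigma_r}\,d\Sigma_r=2\pi\chi(\partial M)$. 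Substituting yields the clean linear relation
\begin{equation*}
A''(r)=4\,A(r)+4\pi\chi(\partial M).
\end{equation*}

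Solving this ODE gives $A(r)=-\pi\chi(\partial M)+c_1e^{2r}+c_2e^{-2r}$ with $c_1>0$ (since $A(r)\to\infty$ in an expanding end), and integrating $V'(r)=A(r)$ then produces $2V(r)-A(r)=-2\pi\chi(\partial M)\,r+O(1)$ as $r\to\infty$. Since every component of $\partial M$ has negative Euler characteristic, $-2\pi\chi(\partial M)>0$, so $2V(r)-A(r)\to+\infty$; in particular $\mathrm{Area}(\partial\Omega_r)=A(r)<2V(r)=2\,\mathrm{vol}(\Omega_r)$ for all sufficiently large $r$, which is the claim.

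The computation itself is routine; the two points that carry the weight are, first, ensuring the equidistant flow stays smooth and embedded for all $r$ (this is where strong convexity and convex co-compactness enter, guaranteeing no focal points and that $\Sigma_r$ foliates the end), and second---the conceptual heart---the sign coming from Gauss--Bonnet. The linear-in-$r$ term of $2V(r)-A(r)$ is exactly $-2\pi\chi(\partial M)\,r$, so its positivity is equivalent to $\chi(\partial M)<0$. This is precisely why the statement holds for surfaces homologous to $\partial M$, whose components have genus at least two, but would fail for a geodesic sphere, where $\chi>0$ reverses the inequality.
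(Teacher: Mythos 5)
Your proof is correct and follows essentially the same route as the paper: both track $\mathrm{Area}(\partial\Omega_r)$ and $\mathrm{vol}(\Omega_r)$ along the equidistant flow and use the Gauss equation plus Gauss--Bonnet to extract the divergent term $-2\pi\chi(\partial M)\,r$ in $2\,\mathrm{vol}(\Omega_r)-\mathrm{Area}(\partial\Omega_r)$, which is positive since $\chi(\partial M)<0$. The only cosmetic difference is that you derive and solve the ODE $A''=4A+4\pi\chi(\partial M)$, whereas the paper writes down its solution directly via the closed-form expansion $\det\big(\cosh(r)I+\sinh(r)A\big)$ of the area element and integrates.
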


\begin{proof}
Let $\Sigma_r=\partial \Omega_r$. The induced metric on the level set $\Sigma_r$ is
\[
g_r=g_0\big(\cosh(r)I  + \sinh(r)A\,,\,\cosh(r)I  + \sinh(r)A  \big)
\]
where $I$ and $A$ is the identity and the second fundamental form of $\Sigma_0$. The metric is well define since $\Sigma_0$ is strongly convex. Hence,
\begin{eqnarray*}
|\Sigma_r|&=& \int_{\Sigma} \det \big( \cosh(r)I  + \sinh(r)A \big)\, d\Sigma \\
&=& \int_{\Sigma} \bigg(\cosh(2r)+ \sinh(2r)\,H + K \big(\frac{\cosh(2r)-1}{2}\big) \bigg)\, d\Sigma.
\end{eqnarray*}
On the other hand, 
\begin{eqnarray*}
	|\Omega_r|&=& |\Omega| + \int_{0}^{r}\int_{\Sigma} \bigg(\cosh(2\rho)+ \sinh(2\rho)\,H + K \big(\frac{\cosh(2\rho)-1}{2}\big) \bigg)\, d\Sigma\, d\rho \\
	&=& |\Omega|  + \frac{1}{2}\int_{\Sigma} \bigg(\sinh(2r) + \big(\cosh(2r) -1\big)\,H + K\, \frac{\sinh(2r)-2r}{2}\bigg) d\Sigma
\end{eqnarray*}
Hence,
\begin{eqnarray*}
	2|\Omega_r|- |\Sigma_r|= 2|\Omega| &+& \int_{\Sigma} (\cosh(2r)-\sinh(2r))(H-1) \, d\Sigma  - \int_{\Sigma} H\, d\Sigma\\
	&+& \frac{\pi}{2}\mathcal{X}(\Sigma) (\sinh(2r)- \cosh(2r))  - \pi\mathcal{X}(\Sigma) (2r-1).
\end{eqnarray*}
 Therefore, $\lim_{r\rightarrow \infty} (2|\Omega_r|-|\Sigma_r|)=+\infty$ since  $\mathcal{X}(\Sigma)<0$.
\end{proof}

We are now ready to show that leaves of the cmc foliation at infinity are in fact isoperimetric, for leaves sufficiently deep into the end. We will use this result in the next section for the isoperimetric comparison results and characterization of the Renormalized Volume.

\begin{theorem}\label{foliation is isoperimetric}
	Let $M$ be a convex co-compact hyperbolic $3$-manifold and $\{\Sigma_H\}_{H\in \mathbb{R}}$  the  cmc foliation at infinity parametrized by  constant mean curvature  $H$. If $H$  is sufficiently close to one, then $\Sigma_H $ is uniquely isoperimetric (with respect to either $I_M$ or $J_M$) for the volume it encloses in $M$.
\end{theorem}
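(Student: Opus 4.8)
\emph{Plan.} The strategy is to recognize the foliation leaf $\Sigma_H$ as the maximizer of the brane functional $\mathcal{F}_H(\Omega)=|\Omega|-\frac{1}{2H}|\partial\Omega|$ of Proposition \ref{Brane Functional}, and to use the elementary observation that such a maximizer is automatically isoperimetric for the volume it encloses. Indeed, let $\Omega_H$ be the region bounded by $\Sigma_H$ and put $V=vol(\Omega_H)$ (measured outside the outermost core). By the Mazzeo--Pacard foliation \cite{MP} together with Corollary \ref{H=1}, for $H$ close to $1$ the leaf $\Sigma_H$ coincides with a global maximizer of $\mathcal{F}_H$ among sets with boundary homologous to $\partial M$; in particular $\mathcal{F}_H(\Omega)\le \mathcal{F}_H(\Omega_H)$ for every such competitor $\Omega$, and $\Sigma_H$ is a strongly stable constant mean curvature surface of mean curvature $H$.

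The comparison then runs as follows. By the existence result there is an isoperimetric region $\Omega$ of volume $V$ with $area(\partial\Omega)=I_M(V)$ and $\partial\Omega$ homologous to $\partial M$; since $\Omega_H$ is a competitor, $I_M(V)\le |\Sigma_H|$. Evaluating the brane inequality at $\Omega$ and $\Omega_H$, both of volume $V$, the bulk terms cancel and one is left with $-\frac{1}{2H}I_M(V)\le -\frac{1}{2H}|\Sigma_H|$, i.e. $I_M(V)\ge|\Sigma_H|$. Hence $I_M(V)=|\Sigma_H|$, so $\Sigma_H$ is isoperimetric; moreover any isoperimetric $\Omega$ of volume $V$ then attains the supremum of $\mathcal{F}_H$ and is therefore itself a strongly stable maximizer, with constant mean curvature $H$.

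To upgrade this to \emph{unique} isoperimetry it remains to show that every such maximizer equals $\Omega_H$. First I would localize $\partial\Omega$ in the foliated end: by the asymptotics computed in Lemma \ref{cheeger}, $\mathcal{F}_H$ decays to $-\infty$ along the convex exhaustion for each fixed $H<1$, so its maximizers stay at finite depth and, as $H\uparrow 1$, are pushed into the region $M-K$ where the canonical foliation lives (this is the compactness statement already used in Corollary \ref{H=1}). Next I would control the topology: by Lemma \ref{components are hyperbolic} every component of $\partial\Omega$ has genus at least two, and a maximum-principle comparison against the strictly monotone foliation---exactly as in the proof of Corollary \ref{uniqueness}---excludes components not homologous to $\partial M$ and forces connectedness on each end. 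Corollary \ref{uniqueness} then identifies the boundary with a single leaf, and strict monotonicity of the foliation together with the volume constraint pins this leaf down to $\Sigma_H$, giving $\partial\Omega=\Sigma_H$.

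The main obstacle is precisely this last step. The brane comparison delivers \emph{some} isoperimetric CMC surface of mean curvature $H$ almost for free, but the real content is ruling out stray components and configurations that fail to be a single foliation leaf: this requires the quantitative asymptotics of Lemma \ref{cheeger} to confine the surface to the foliated end, and the barrier and maximum-principle arguments to control connectedness and homology so that Corollary \ref{uniqueness} becomes applicable.
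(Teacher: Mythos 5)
Your first half is correct and takes a genuinely different route from the paper: you obtain isoperimetry of the leaf directly from the global brane comparison $\mathcal{F}_H(\Omega)\le\mathcal{F}_H(\Omega_H)$ at fixed volume, which cleanly yields $|\partial\Omega|\ge|\Sigma_H|$ \emph{for competitors in the brane class}, i.e. regions whose boundary is homologous to $\partial M$. The paper instead argues on the isoperimetric minimizers themselves: it takes isoperimetric regions $\Omega_{V_i}$ with $V_i\to\infty$, shows by a compactness/blow-up argument (graphical convergence to a stable non-compact CMC limit, Fischer-Colbrie conformality, and Da Silveira's theorem for the operator $\Delta+|\mathring{A}|^2$, combined with Corollary \ref{H=1} and the proof of Lemma \ref{components are hyperbolic}) that $\partial\Omega_{V_i}$ must leave every compact set, and then identifies drifting boundaries with leaves via Corollary \ref{uniqueness}. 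Your brane route buys a softer proof of the minimization inequality, at the price of only seeing competitors in a fixed homology class.

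That restriction is where the genuine gap lies. Theorem \ref{foliation is isoperimetric} is about the unrestricted problem (this is how it is used in Proposition \ref{renormalized isoperimetric}, which works with $J_M$), and the paper's own proof devotes its entire last paragraph to the case $\Omega_0\subset\Omega_V^c$, i.e. candidate regions \emph{not} containing the outermost core, whose boundaries are not homologous to $\partial M$. Your uniqueness step asserts that ``any isoperimetric $\Omega$ of volume $V$ then attains the supremum of $\mathcal{F}_H$,'' but this inference is only valid if $\Omega$ already lies in the brane competitor class -- for an unrestricted minimizer with different homology the brane inequality says nothing, so the argument is circular exactly on the configurations that need to be excluded. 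Moreover, your proposed fix (``a maximum-principle comparison against the strictly monotone foliation excludes components not homologous to $\partial M$'') fails for the critical example: a shell-type region trapped between two leaves deep in one end. Each of its boundary components \emph{is} a leaf, so no tangency comparison produces a contradiction; such configurations are ruled out in the paper not by the maximum principle but by the quantitative area--volume comparison $|\partial\Omega_{V_i}|>|\Sigma_{R_i}|-|\Sigma_{R_j}|>|\Sigma_{R_j}|$, which rests on Lemma \ref{cheeger} and the asymptotic log-convexity statement $2H_V=I'(V)>I(V)/V$ for large $V$. Without an argument of this kind (or at least an argument from criticality, noting that an isoperimetric boundary must have one constant mean curvature with respect to its outward normal on \emph{all} components, which a shell between two leaves violates), your proof establishes isoperimetry of $\Sigma_H$ only within the homology class, not in $M$, and in particular does not recover the statement the paper needs downstream.
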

\begin{proof}
Let $\Omega_{V_i}$ be an isoperimetric region of volume $V_i$.	Let us study first the case $\partial\Omega_{V_i}\cap K\neq \emptyset$, where $K\subset M$ is some fixed compact set and $\lbrace V_i\rbrace$ is a sequence of volumes satisfying $V_i\rightarrow \infty$. By compactness theorem for isoperimetric  surfaces, $\partial \Omega_{V_i}$ converges in the graphical sense and with multiplicity one to a non-compact stable constant mean curvature surface $\Sigma_{\infty}$ in  $M$.  Note that the mean curvature of $\Sigma_{\infty}$ satisfies $H\geq 1$. Indeed, by the maximum principle the mean curvature  of $\partial\Omega_{V_i}$ is greater than the mean curvature $H_{R_i}$ of $\Sigma_{R_i}$, where $\Sigma_{R_i}$ is a leaf of the cmc foliation  that encloses $\Omega_{V_i}$  and it is tangent to $\partial\Omega_{V_i}$. On the other hand, $\lim_{i\rightarrow \infty}H_{R_i}=1$. It follows from $H\geq 1$ that  the operator $P=\Delta+ |\mathring{A}|^2$ satisfies 
	\[
	0\leq -\int_{\Sigma_{\infty}} fP(f)\,d_{\Sigma_{\infty}}\quad \text{for every}\quad \int_{\Sigma_{\infty}} f\,d_{\Sigma_{\infty}}=0.
	\]
By the monotonicity formula, $\Sigma_{\infty}$ is either compact or  has infinite area. We will deal with the latter case first. In that case,  $\Sigma_{\infty}$  is also conformally equivalent to a closed Riemann surface with finite points removed by Fisher-Colbrie \cite{FC}. Theorem 1.6 in Da Silveira \cite{DS} applied to the operator $P=\Delta + |\mathring{A}|^2$ implies that $\mathring{A}\equiv 0$ and $\Sigma_{\infty}$ is totally umbilical. As $\Sigma_{\infty}$ is a non-compact surface with mean curvature $H\geq 1$, then it has to be an embedded oriented horosphere $H_0$ in $M$. Similarly, by the strong compactness properties for sequences of isoperimetric surfaces, any sequence of basepoints in $\partial\Omega_{V_i}$ will locally converge (after a subsequence) to a horosphere in $M$ or in $\mathbb{H}^3$ under the Cheeger-Gromov convergence for manifolds. And since horospheres are strictly convex, we have that $\partial\Omega_{V_i}$ will be locally convex for $i$ large enough. 

In general, isoperimetric regions might not be connected. Nevertheless,  each connected component is  also isoperimetric. By our assumptions, at least one component will have large volume and passing through a compact region, say $\Omega_{V_i}$. Following the prove of \cite[Theorem 3.3]{U}, we can use the local convexity of $\partial\Omega_{V_i}$ and the normal geodesic flow to see that the covering of $M$ associated to $\Omega_{V_i}$ (with the induced hyperbolic metric) is obtained by gluing $\Omega_{V_i}$ to $\partial\Omega_{V_i}\times\mathbb{R}^+_0$, where  $\partial\Omega_{V_i}\times\mathbb{R}^+_0$ has a metric so that $\partial\Omega_{V_i}\times\lbrace t\rbrace$ is locally convex for any $t\geq0$. We denote this covering by $\tilde{M}_{V_i}$, and it follows from the normal geodesic flow construction that the surfaces $\partial\Omega_{V_i}\times\lbrace t\rbrace$ are equidistant to one another. From this description of $\tilde{M}_{V_i}$ it follows that $\Omega_{V_i}$ is \textit{geodesically convex} in $\tilde{M}_{V_i}$, meaning that any geodesic in $\tilde{M}_{V_i}$ with endpoints in $\Omega_{V_i}$ is contained in $\Omega_{V_i}$. Since $\tilde{M}_{V_i}$ is the covering associated to $i_*:\pi_1(\Omega_{V_i})\rightarrow\pi_1(M)$, it follows that any homotopically trivial geodesic segment $\gamma:([0,1],\lbrace0,1\rbrace)\rightarrow(M,\Omega_{V_i})$ (i.e. homotopic into $\Omega_{V_i}$ relative to $\Omega_{V_i}$) has image in $\Omega_{V_i}$. We will use this to show in fact that $i_*:\pi_1(\Omega_{V_i})\rightarrow\pi_1(M)$ has trivial image and hence $\tilde{M}_{V_i}$ is $\mathbb{H}^3$.

Assume that $g\in\pi_1(M)$ is a nontrivial element in the image of $i_*:\pi_1(\Omega_{V_i})\rightarrow\pi_1(M)$. Hence we have a homotopically trivial geodesic segment $\gamma$ of $(M,\Omega_{V_i})$ which, for $i$ sufficiently large, lies close to the orthogeodesic of $\Sigma_\infty=H_0$ associated to $g$. But then such homotopically trivial geodesic segment $\gamma$ will not be contained in $\Omega_{V_i}$, which is a contradiction.

Since $\Omega_{V_i}$ lifts to $\mathbb{H}^3$, then it must be a hyperbolic geodesic ball. This is impossible for the outermost isoperimetric profile $I_M$, as $\Omega_{V_i}$ should contain $\Omega_0$. For the isoperimetric profile $J_M$, we saw on Lemma \ref{cheeger} we can construct sets $\Omega_r$ so that $\lim_{r\rightarrow \infty} (2|\Omega_r|-|\partial\Omega_r|)=+\infty$. Such competitors will beat hyperbolic geodesic balls for sufficiently large volumes, so we have a contradiction for the profile $J_M$ as well.

Now we deal with the case when $\Sigma_{\infty}$ is compact. In this case $\partial \Omega_V$ is disconnected which implies  by the proof of Lemma \ref{components are hyperbolic} that $H_{V_i}<1$ for large $i$. Hence,  $\Sigma_{\infty}$ has mean curvature $H=1$. This contradicts Corollary \ref{H=1} and, hence, $\partial\Omega_V$ diverges to infinite as $V\rightarrow \infty$. 
\vspace{0.1cm}
	
\noindent	Let us assume now  that the outermost region $\Omega_0\subset \Omega_{V_i}$ and that  $\partial \Omega_{V_i}$ is drifting towards  infinity. Since $\partial \Omega_{V_i}$ is homologous to $\partial \Omega_0$,   Corollary \ref{uniqueness} implies at least one component of $\partial \Omega_{V_i}$ in a fixed end is a leaf of the foliation. But this implies $\Omega_{V_i}$ is connected since other components would have larger mean curvature. Hence, each component of  $\partial \Omega_{V_i}$ is a leaf of the canonical foliation.  The argument so far shows that  the leafs of the canonical foliation are uniquely isoperimetric with respect to the outermost isoperimetric profile $I_M$ (the characterization also  holds  at  any   end of $M$).    Next we assume that $\Omega_0\subset \Omega_V^c$.  Let us show that such configuration is not  isoperimetric. Note that all  components must be  drifting off to infinity.
By previous argument, we can deduce that $|\partial \Omega_{V_i}|> |\Sigma_{R_i}|-|\Sigma_{R_j}|$, where the enclosed volume of the leaf $\Sigma_{R_j}$ is $V_j$ and the enclosed volume  of $\Sigma_{R_i}$  is $V_i+V_j$. By the Fundamental Theorem of Calculus, $|\Sigma_{R_i}|-|\Sigma_{R_j}|=  2H_{s_0}\,V_i$, where $s_0\in (R_j,R_i)$. In particular, $|\Sigma_{R_i}|-|\Sigma_{R_j}|> 2H_{R_j}\,V_i$. Hence, \[|\partial \Omega_{V_i}|>  |\Sigma_{R_j}|\, \frac{2\,H_{R_j}\,V_i}{|\Sigma_{R_j}|}.\]
	Now choose $R_j$ such that $V_j= V_i$. One can check that the profile $I_M(V)$ associated to the foliation  $\Sigma_H$ satisfies  $2H_V=I^{\prime}_M(V)>\frac{I_M(V)}{V}$ for large $V$. Indeed, this is equivalent showing that $\ln\big( \frac{I_M(V)}{V}\big)$ is an increasing function for large volume $V$.  For this just notice that $\lim_{V\rightarrow \infty} \frac{I_M(V)}{V}=2$, that  $|\Sigma_{R_k}|<|\Gamma_r|$, where $\Gamma_r$ is equidistant to a fixed cmc leaf $\Gamma_0$ and encloses the same volume as $\Sigma_{R_k}$,  and that $|\Gamma_r|<2\,V_k$ by Lemma \ref{cheeger}.  Therefore, $|\partial \Omega_{V_i}|> |\Sigma_{R_j}|$.
\end{proof}
\begin{remark}Alternatively, we could have worked with  the minimizers $\Sigma_H$ of $\mathcal{F}_H$, so that if we follow the same  steps in the proof above we would obtain that for $H$ sufficiently close to $1$ that $\Sigma_H$ is equal to the cmc leafs of the canonical foliation. Standard comparison implies that minimizers of $\mathcal{F}$ are isoperimetric with respect to the outermost profile $I_M$. As a result, the leafs of the canonical foliation are strongly stable in the sense (\ref{brane stability}).
	\end{remark}

Now we  strengthen Lemma \ref{monotonicity} by showing that the Hawking mass is non-positive.

\begin{lemma}\label{hawking mass negative}
	The Hawking mass  satisfies  $m_H(V)< 0$ for every $V$ unless the ends of $M$ are Fuchsian where  $m_H(V)\equiv 0$. 
\end{lemma}
\begin{proof}
	\noindent If $V$ is sufficiently large, then $\Gamma_V$ is connected on each end by Theorem \ref{foliation is isoperimetric} and have the topology of $\partial M$. Note also that by Theorem \ref{foliation is isoperimetric}, the isoperimetric profile $I_M(V)$ is differentiable for $V$ sufficiently large. Integrating the Gauss equation  and applying the Gauss-Bonnet Theorem on each component of $\Gamma_V$ gives \begin{eqnarray*}
		2\pi \chi(\partial M)&=&\int_{\Gamma_V} K_V\, d\Gamma= \int_{\Gamma_V} ( -1+ \det(A_V))\,d_{\Gamma}=\int_{\Gamma_V}( -1  +H_{V}^2 - \frac{1}{2}|\mathring{A}_V|^2 \,d\Gamma \\
		&\leq& \int_{\Gamma_V}(-1 + H_V^2)\,d\Gamma= -I_M(V)+\frac{1}{4}I_M^{\prime}(V)^2I_M(V).
	\end{eqnarray*}
	Therefore, $m_H(V)\leq 0$. Since $m_H(V)$ is non-decreasing, we conclude that $m_H(V)\leq 0$ for all volumes. If $m_H(V)=0$, then $\Gamma_V$ is totally umbilical and each end of $M$ is Fuchsian.
\end{proof}

\section{Isoperimetric profile comparison}\label{sec:isoperimetriccomparison}

\noindent Since our goal is to relate  $V_R$ to the isoperimetic profile, we start with an expression of $V_R$ in terms of volume and area only. We  achieve this by using the fact that for an equidistant foliation $M_r$, the mean curvature $H$ approaches $1$ exponentially on $r$. We recall that the equidistant foliation $M_r$ corresponds to the hyperbolic metric at infinity of Gauss curvature $-4$.

\begin{proposition}\label{prop:asympprofileVR}
	\begin{equation}
	\lim_{r\rightarrow\infty} Vol(M_r) - \frac12 Area (\partial M_r) + \pi\chi(\partial M)\log \sqrt{\frac{2Area(\partial M_r)}{\pi|\chi(\partial M)|}} = V_R(M) +\frac{\pi}{2}\chi(\partial M)
	\end{equation}
\end{proposition}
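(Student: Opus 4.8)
The plan is to start from the $r$-independent definition of the W-volume,
\begin{equation*}
W(M,h) = \mathrm{vol}(M_r) - \frac{1}{2}\int_{\partial M_r} H\, da + r\pi\chi(\partial M),
\end{equation*}
and to isolate the two quantities that appear in the proposition, namely $\mathrm{Vol}(M_r)$ and $\mathrm{Area}(\partial M_r)$. The idea is that $W(M,h)$ is a fixed constant (equal to $V_R(M)$ once we fix the uniformizing metric $h_0$), so the content of the proposition is entirely an asymptotic bookkeeping statement about how $\int_{\partial M_r} H\, da$ and the parameter $r$ relate to $\mathrm{Area}(\partial M_r)$ as $r\to\infty$. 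First I would record the two exponential asymptotics that drive everything: along the equidistant (or CMC) foliation the mean curvature satisfies $H = 1 + O(e^{-2r})$, and the induced metric scales as $g_r \sim e^{2r} h$, so $\mathrm{Area}(\partial M_r) = e^{2r}\,\mathrm{Area}(\partial M, h) + O(1)$. Choosing $h = h_0$ to be the metric of constant Gauss curvature $-4$ normalized so that $\mathrm{Area}(\partial M, h_0) = -\frac{\pi}{2}\chi(\partial M)$, this gives $\mathrm{Area}(\partial M_r) = -\frac{\pi}{2}\chi(\partial M)\, e^{2r} + o(e^{2r})$.

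Next I would rewrite the logarithmic term in the proposition using this area asymptotic. Since $\frac{2\,\mathrm{Area}(\partial M_r)}{\pi|\chi(\partial M)|} \to e^{2r}\cdot\frac{2\cdot\frac{\pi}{2}|\chi(\partial M)|}{\pi|\chi(\partial M)|} = e^{2r}$ to leading order, we get
\begin{equation*}
\log\sqrt{\frac{2\,\mathrm{Area}(\partial M_r)}{\pi|\chi(\partial M)|}} = r + o(1),
\end{equation*}
so that $\pi\chi(\partial M)\log\sqrt{\tfrac{2\,\mathrm{Area}(\partial M_r)}{\pi|\chi(\partial M)|}} = r\pi\chi(\partial M) + o(1)$. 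This is exactly the term $r\pi\chi(\partial M)$ that appears in $W(M,h_0)$, which is the key cancellation making the limit finite. Substituting, the expression inside the limit becomes
\begin{equation*}
\mathrm{Vol}(M_r) - \tfrac12\mathrm{Area}(\partial M_r) + r\pi\chi(\partial M) + o(1),
\end{equation*}
and comparing with $W(M,h_0) = \mathrm{Vol}(M_r) - \tfrac12\int_{\partial M_r} H\, da + r\pi\chi(\partial M) = V_R(M)$, the difference reduces to $\tfrac12\big(\int_{\partial M_r} H\, da - \mathrm{Area}(\partial M_r)\big) = \tfrac12\int_{\partial M_r}(H-1)\, da$.

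The remaining task, which I expect to be the main obstacle, is to show that this last integral converges to $-\pi\chi(\partial M)$, i.e.
\begin{equation*}
\lim_{r\to\infty} \frac{1}{2}\int_{\partial M_r}(H-1)\, da = \frac{\pi}{2}\chi(\partial M).
\end{equation*}
This requires the next order of the asymptotic expansion: although $H-1 = O(e^{-2r})$ and the area grows like $e^{2r}$, their product is $O(1)$ and the limit is a genuine geometric invariant. I would extract it from the Gauss equation together with Gauss-Bonnet: along each leaf, $K = -1 + \det A = -1 + H^2 - \tfrac12|\mathring{A}|^2$, and since the foliation is totally umbilic in the limit ($\mathring{A}\to 0$ fast, as follows from the Fefferman-Graham expansion $g = x^{-2}(dx^2 + h_0 + h_2 x^2 + \cdots)$), the trace-free part contributes negligibly. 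Integrating $H^2 - 1 = (H-1)(H+1)$ with $H+1 \to 2$, one finds $\int_{\partial M_r}(H-1)\, da \to \tfrac12\int_{\partial M_r}(H^2 - 1)\, da \to \tfrac12\int K\, da = \tfrac12\cdot 2\pi\chi(\partial M) = \pi\chi(\partial M)$, using that $H^2-1 - (K+1-\det A)$ and $|\mathring A|^2$ integrate to lower order. Carefully tracking which terms are $o(1)$ versus $O(1)$ in this computation — in particular justifying that the $|\mathring{A}|^2$ and higher Fefferman-Graham corrections do not contribute — is the delicate part; this is precisely the place where the hypothesis that $H$ tends to $1$ exponentially (as quoted from the foliation construction) is essential.
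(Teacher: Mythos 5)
Your proposal is correct and follows essentially the same route as the paper: both cancel the $r\pi\chi(\partial M)$ term against the logarithm via the normalization $e^{-2r}\mathrm{Area}(\partial M_r)\to\frac{\pi}{2}|\chi(\partial M)|$, and both reduce the proposition to the key limit $\lim_{r\to\infty}\int_{\partial M_r}(H-1)\,da=\pi\chi(\partial M)$, which the paper establishes by the same Gauss-equation/Gauss--Bonnet computation you outline, only with the error terms controlled through the explicit equidistant formulas for $k_{1,2}^r$ and $da_r$ rather than your appeal to the exponential decay of $H-1$ and $\mathring{A}$. Note only the prose slip ``converges to $-\pi\chi(\partial M)$,'' which contradicts your own displayed equation and final computation; the correct value is $\pi\chi(\partial M)$ (negative, since $\chi(\partial M)<0$).
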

\begin{proof}
Let us first prove  that \begin{equation}\label{L1 norm H-1}
\lim_{r\rightarrow \infty} \int_{\partial M_r} (H-1)\,da  = \pi\chi(\partial M).
\end{equation}
	Fix $s$ and denote the metric at $\partial M_{s}$ by $g$, shape operator $A$ and its principal curvatures by $k_{1,2}$. Then the metric and principal curvatures at $\partial M_r$ are given by (see \cite{U})
	
	\begin{equation*}
	g_r(u,v) = g(\cosh(r-s)u+\sinh(r-s)Au, \cosh(r-s)v+\sinh(r-s)Av) 
	\end{equation*}
	
	\begin{equation*}
	k_{1,2}^r = \frac{\sinh(r-s) + \cosh(r-s)k_{1,2}}{\cosh(r-s) + \sinh(r-s)k_{1,2}}
	\end{equation*}
	In particular, the volume element at $\partial M_r$ is given by
	
	\begin{equation*}
	da_r = (\cosh(r-s) + \sinh(r-s)k_1)(\cosh(r-s) + \sinh(r-s)k_2) da_s
	\end{equation*}
	
	So then
	
	\begin{equation*}
	\int_{\partial M_r} (H_r-1)da_r = \frac{1}{4}\int_{\Sigma_s} (-1+k_1)(1+k_2)+(-1+k_2)(1+k_1) da_s+ O(e^{s-r}) 
	\end{equation*}
	
	Applying  the Gauss equation and Gauss-Bonnet Theorem:
	
	\begin{equation*}\label{eq:HvsArea}
	\lim_{r\rightarrow\infty}\int_{\partial M_r}(H_r-1)da_r = \frac{1}{2}\int_{\partial M_s}(-1+k_1k_2)da_s =\pi\chi(\partial M_s) 
	\end{equation*}
		The equidistant foliation is parametrized such that 
	$	\lim_{r\rightarrow\infty} e^{-2r}g_r = h_{0}$,
	where $h_{0}$ is the hyperbolic metric at infinity of Gauss curvature $-4$. In particular,
	
	\begin{equation*}
	\lim_{r\rightarrow\infty} e^{-2r}Area(\partial M_r) = \frac{\pi}{2}|\chi (\partial M)|
	\end{equation*} 	
	This is equivalent to	
	\begin{equation}\label{asymptotics r}
	\lim_{r\rightarrow\infty} \left(r-\log\sqrt{\frac{2Area(\partial M_r)}{\pi\chi(\partial M)}}\,\right) = 0
	\end{equation}
If we  add and subtract both  terms $\frac{1}{2}\,\text{Area}(\partial M_r)$ and $\log\sqrt{\frac{2Area(\partial M_r)}{\pi\chi(\partial M)}}$  in  the  renormalized volume formula (\ref{RenVol}),  the proposition will follow from an application  of (\ref{L1 norm H-1}) and (\ref{asymptotics r}).	
\end{proof}

\subsection{Isoperimetric comparison}	 Recall that $\Omega_0$ denotes the outermost region of the convex co-compact hyperbolic manifold $M$. In what follows we will show an isoperimetric comparison for the outermost isoperimetric profile $I_{M}$
	 \[
	 I_{M}(V)= \inf \{ \text{area}(\partial \Omega)\,:\, \Omega_0\subset \Omega\quad \text{and}\quad \text{vol}(\Omega-\Omega_0)=V
	 \}
	 \] 
	with that of the hyperbolic metric with totally geodesic convex core. Notice that $I_{M}$ takes into consideration all ends of  $M$. The standard isoperimetric profile of $M$, i.e.  infimum of boundary area among  all regions of a given volume, is denoted by $J_M$. .

\begin{theorem}\label{main result}
Let M be a hyperbolic $3$-manifold that is either acylindrical or quasi Fuchsian. Let $I_{M}(V)$ be the isoperimetric profile of $M$ with respect to its outermost region $\Omega_0$ and $I_{TG}$ (resp. $J_{TG}$) the outermost  isoperimetric profile (resp. standard isoperimetric profile) of the hyperbolic metric in the deformation space of $M$ that has totally geodesic convex core $\Omega_{TG}$. Then  \begin{eqnarray*} \lim_{V\rightarrow \infty}\bigg(J_{TG}(V)-J_M(V)\bigg)>0 \quad \text{and}\quad I_{M}(V)< I_{TG}(V+|\Omega_0| - |\Omega_{TG}|).
\end{eqnarray*}
In particular, if $M$ contains only one closed minimal surface, then $I_M(V)<I_{TG}(V)$ for every volume $V>0$.
\end{theorem}
One should observe   by the special case in the main theorem in \cite{AST} that  $|\Omega_0|-|\Omega_{TG}|\geq 0$.
\begin{proof}
Let  $U_r$ be the equidistant foliation of $M$ at infinity  by convex sets inducing the hyperbolic   metric $h_0$ of Gauss curvature $-4$ at the conformal infinity $\partial M$  via
\[
h_0=\lim_{r\rightarrow \infty}\, e^{-2r} g_r,
\]	
where $g_r$ is the induced Riemannian metric on $\partial U_r$.  

Proposition \ref{prop:asympprofileVR} and an isoperimetric comparison yields 
\begin{eqnarray}\label{RenVolProfile}
 V_R(M)&=& \lim_{r\rightarrow \infty} \bigg(\text{vol}(U_r)- \frac{1}{2}\text{area}(\partial U_r)  + \pi\chi(\partial M)\log \sqrt{\frac{2\,Area(\partial U_r)}{\pi|\chi(\partial M)|}}\, \bigg) -\frac{\pi}{2}\chi(\partial M) \nonumber \\
 &\leq& \lim_{V\rightarrow \infty} \bigg( 
V- \frac{1}{2}J_{M}(V) +\pi\chi(\partial M)\log \sqrt{\frac{2\,J_M(V)}{\pi |\chi(\partial M)|}}
\bigg) -\frac{\pi}{2}\chi(\partial M)
\end{eqnarray}
where we are using that the function $x-\pi\chi(\partial M)\log\sqrt{x}$ is increasing for large values of $x$.

By Theorem \ref{Comparison Renormalized volume} and Proposition \ref{prop:asympprofileVR}, the Renormalized volume $V_R(M)$  as a functional in the moduli space of convex co-compact $3$-manifolds attains its  global minimum  at  the geodesic class of the conformal infinity $\partial M$.  Using  this  comparison  and 
\[
 V_R\big(g_{TG}\big)=  \lim_{V\rightarrow \infty} \bigg( 
V- \frac{1}{2}J_{TG}(V) + \pi \chi(\partial M)\log\, \sqrt{\frac{ 2\, J_{TG}(V)}{\pi \chi(\partial M)}}
\bigg) - \frac{\pi}{2}\chi(\partial M)
\]
we obtain that
\[
\lim_{V\rightarrow \infty} \bigg( (J_{TG}(V) -\pi\chi(\partial M)\log \sqrt{J_{TG}(V)}) - (J_M(V) - \pi\chi(\partial M)\log \sqrt{J_M(V)}) \bigg) \geq V_R(M)-V_R(M_{TG}).
\]
Since $x-\pi\chi(\partial M)\log\sqrt{x}$ is increasing for large $x$, we have that
\[
\lim_{V\rightarrow \infty} \bigg( (J_{TG}(V) - J_M(V) \bigg) > 0.
\]
Using  for large volume $V$  that $I_M(V) = J_M(V+|\Omega_0|)$ and  $I_{TG}(V) = J_{TG}(V+|\Omega_{TG}|)$, we conclude after a relabeling  that the isoperimetric profile of $M$ with respect to the outermost region $\Omega_0$ satisfies 
\[
\lim_{V\rightarrow \infty} \bigg( I_{TG}(V+|\Omega_0| - |\Omega_{TG}|)-I_{M}(V) \bigg)> 0.
\]
 As observed earlier, we have  as a  special case of the main theorem in \cite{AST} that  $|\Omega_0|-|\Omega_{TG}|\geq 0$.  In particular, we obtain \[I_M(0)\leq I_{TG}(0)<I_{TG}(0-|\Omega_0|- |\Omega_{TG}|).\] The first inequality follows from the Gauss-Bonnet Theorem and the second  from the monotonicity of the isoperimetric profile.
On the other hand, we have by definition of the Hawking mass function the following equation for $I_{M}$ and $I_{TG}$ 
	\[
-2\pi \mathcal{X}(M) - \frac{I_M^{\prime+}(V)^2I_M(V)}{4} +I_M(V)= \frac{m_H(V)}{\sqrt{I_M(V)}}.
	\]
	\[
2\pi \mathcal{X}(M) + \frac{I_{TG}^{\prime}(V+|\Omega_0| - |\Omega_{TG}|)^2I_{TG}(V+|\Omega_0| - |\Omega_{TG}|)}{4} -I_{TG}(V+|\Omega_0| - |\Omega_{TG}|)=0.
	\]
	By adding these two equations, we obtain
	\[
	I_M(V)-I_{TG}(V+ |\Omega_0| - |\Omega_{TG}|) - \frac{(I_M^{\prime+})^2I_M(V)-(I_{TG}^{\prime+})^2I_{TG}(V+|\Omega_0)| - |\Omega_{TG}|)}{4}= \frac{m_H(V)}{\sqrt{I_M(V)}}.
	\]
Therefore, the function $f(V)=I_M(V)-I_{TG}(V+|\Omega_0|-|\Omega_{TG}|)$ does not   have a positive local maximum point since that would imply  $m_H(V)>0$, contradicting   Lemma \ref{hawking mass negative}. While the isoperimetric profile $I_{TG}$ is  smooth,  the  graph of the isoperimetric profile $I_M$ can have corners.    To deal with this possibility at the local maximum point $V_0$,  we replace $I_M$ locally near $V_0$ by the area profile function $f(V)$ associated to the equidistant deformation of the isoperimetric surface $\Gamma_{V_0}$. 
\end{proof}

 Theorem \ref{main result} brings a connection between $V_R$ and $J_M(V)$. The following proposition    shows that $V_R$ is in fact determined by the asymptotic of the isoperimetric profile $J_M(V)$. 

\begin{theorem}[Theorem 1.2]\label{renormalized isoperimetric} Let $M$ be a convex co-compact hyperbolic 3-manifold. Then
\begin{eqnarray*}
\begin{split}
V_R(M) + \frac{\pi}{2}\chi(\partial M)&= \lim_{V\rightarrow \infty} \bigg( 
V-  \frac{1}{2}J_{M}(V) + \pi \chi(\partial M)\log \sqrt{\frac{2\,J_M(V)}{\pi |\chi(\partial M)|}}\,
\bigg).\\
&= \lim_{V\rightarrow \infty} \bigg( 
V + |\Omega_0|-  \frac{1}{2}I_{M}(V) + \pi\chi(\partial M)\log \sqrt{\frac{2\,I_M(V)}{\pi |\chi(\partial M)|}}\,
\bigg).
\end{split}
\end{eqnarray*}
\end{theorem}

\begin{proof}
Let $\Omega_V$ be the isoperimetric region of  volume $V$ and $\Omega_r$ its equidistant enlargement region at distance $r$. Following  Lemma \ref{cheeger}, we have   that
\begin{eqnarray}\label{eq1}
\lim_{r\rightarrow \infty} e^{-2r}|\partial \Omega_r|= \frac{1}{2}|\partial \Omega_V| + \frac{1}{2}\int_{\partial \Omega_V} H\, d\Sigma + \frac{\pi}{2} \mathcal{X}(\partial M)= \beta
\end{eqnarray}
By the computations in Lemma \ref{cheeger} and the variational characterization of the renormalized volume in the space of   metrics conformal to $\partial M$ having area $\beta$, we obtain
\begin{eqnarray*}
|\Omega_V|- \frac{1}{2}\int_{\partial \Omega_V} H\,d\Sigma + \frac{\pi}{2}\mathcal{X}(\partial M)=\lim_{r\rightarrow \infty} \bigg(|\Omega_r| - \frac{1}{2}|\partial \Omega_r| + r \pi \mathcal{X}(\partial M) \bigg)\leq V_R(M,\beta) + \frac{\pi}{2}\chi(\partial M).
\end{eqnarray*}
The notation $V_R(M,\beta)$ reflects the constraint on the area of the conformal metric at infinity. On the other hand, one has
 \[V_R(M,\beta)=  V_R\big(M, \frac{\pi}{2}|\chi(\partial M)|\big) - \frac{\pi}{2}\chi(\partial M)\log \bigg(\frac{2\beta}{\pi |\chi(\partial M)|}\bigg).\] 
 Therefore,
\begin{equation}\label{eq2}
|\Omega_V| - \frac{1}{2}|\partial \Omega_V|- \frac{1}{2}\int_{\partial \Omega_V} (H-1)\,d\Sigma + \frac{\pi}{2}\mathcal{X}(\partial M) + \frac{\pi}{2}\chi(\partial M) \log\bigg(\frac{2\beta}{\pi |\chi(\partial M)|}\bigg) \\ \leq V_R\big(M, \frac{\pi}{2}|\chi(\partial M)|\big) + \frac{\pi}{2}\chi(\partial M).
\end{equation}
Substituting equation (\ref{eq1}) into (\ref{eq2}), we obtain that

\begin{eqnarray}\label{take limit}
 V- \frac{1}{2}J_M(V) &+& \frac{\pi}{2} \mathcal{X}(\partial M) \log \bigg( \frac{2\,J_M(V)}{\pi |\chi(\partial M)|} + \frac{1}{|\pi\chi(\partial M)|}\int_{\partial \Omega_V} (H-1)\,da - 1\bigg) \nonumber \\
& \leq&  V_R\big(M,\frac{\pi}{2}|\chi(\partial M)|\big) + \frac{1}{2}\int_{\partial \Omega_V} (H-1)\,da - \frac{\pi}{2}\chi(\partial M) +\frac{\pi}{2} \chi(\partial M). 
\end{eqnarray} 
Since the Hawking mass $m_H(V)$ is monotone increasing by Lemma \ref{monotonicity} and bounded by Lemma \ref{hawking mass negative}, we  have
\[
\lim_{V\rightarrow \infty} \int_{\Sigma_V} \big(H-1 \big) d\Sigma=  \pi \mathcal{X}(\partial M).
\]
By Taking the limit as $V\rightarrow \infty$ in both sides of (\ref{take limit}) and applying this identity, we  obtain 
\begin{eqnarray*}
	 \lim_{V\rightarrow \infty} \bigg( 
	V-  \frac{1}{2}J_{M}(V) + \pi\chi(\partial M)\log \sqrt{\frac{2\,J_M(V)}{\pi |\chi(\partial M)|}}\,
	\bigg) \leq V_R(M) + \frac{\pi}{2} \chi(\partial M).
\end{eqnarray*}
The reverse inequality, obtained in (\ref{RenVolProfile}), is a straightforward  isoperimetric comparison for the terms in the expression of $V_R(M,h_0)$.
\end{proof}

\begin{remark}
The  following isoperimetric inequality  for strongly convex regions $\Omega\subset M$ such that $\partial \Omega \in  [\partial M]$  follows from  the definition of the renormalized isoperimetric constant $V_R(M)$ and Proposition \ref{renormalized isoperimetric}:
\begin{equation*}\label{renormalized isoperimetric constant}
|\Omega| - \frac{1}{2} \int_{\partial \Omega} H\, da \leq V_R(M) + \frac{\pi |\chi(\partial M)|}{2} \log \bigg( \frac{1}{\pi |\chi(\partial M)|}\int_{\partial \Omega} H\,da  + \frac{|\partial \Omega|-\pi |\chi(\partial M)|}{\pi |\chi(\partial M)|} \bigg).
\end{equation*}
\end{remark}

\begin{corollary}[Theorem 1.1]\label{cor:maintheorem1}
	Let $M$ be a convex co-compact $3$-manifold that is either acylindrical or quasifuchsian, and let $\Omega_0$ be its outermost region. If $I_{M}, I_{TG}$ denote the outermost isoperimetric profiles of $M$ and $M_{TG}$ (the quasiconformal deformation of $M$ with Fuchsian ends) respectively, then
	\begin{eqnarray*}
		\frac12\lim_{V\rightarrow \infty} \big(I_{TG}(V)- I_{M}(V)\big)= V_R(M)-|\Omega_0|.
	\end{eqnarray*}
	Equivalently,\begin{eqnarray*}
V_R(M)=	\frac12\lim_{V\rightarrow \infty} \big(J_{TG}(V)- J_{M}(V)\big).
\end{eqnarray*}
 \end{corollary}

\begin{proof}
Using Theorem \ref{renormalized isoperimetric} for $M$ with $V=V'+|\Omega_0|$ and for $M_{TG}$ with $V=V'+|\Omega_{TG}|$ and then relabeling $V'$ by $V$ we obtain

\begin{equation}\label{eq:diffVR}
    V_R(M) - V_R(M_{TG}) = \lim_{V\rightarrow\infty} \bigg( |\Omega_0|-|\Omega_{TG}| + \frac12(I_{TG}(V)-I_M(V)) + \pi\chi(\partial M)\log\sqrt{\frac{I_{TG}(V)}{I_M(V)}} \bigg)
\end{equation}
Since $V_R(M_{TG}) = |\Omega_{TG}|$ then this reduces to 
\[
V_R(M) = |\Omega_0| +  \lim_{V\rightarrow\infty} \bigg( \frac12(I_{TG}(V)-I_M(V)) + \pi\chi(\partial M)(\log\sqrt{I_{TG}(V)}- \log\sqrt{I_M(V)}) \bigg)
\]

This in particular implies that $\limsup_{V\rightarrow\infty} |I_{TG}(V)- I_M(V)| < +\infty$, since otherwise the left-side of (\ref{eq:diffVR}) would not converge. We also have $\lim_{V\rightarrow\infty} I_{TG}(V) = \lim_{V\rightarrow\infty}I_M(V) = +\infty$, then $\lim_{V\rightarrow\infty} \log(\sqrt{I_{TG}(V)})-\log(\sqrt{I_M(V)}) = 0$. Hence we have

\begin{equation}
    V_R(M) = |\Omega_0|+\lim_{V\rightarrow\infty}  \frac12(I_{TG}(V)-I_M(V))
\end{equation}
which finishes the proof.
\end{proof}

\begin{remark} Observe that in Corollary \ref{cor:maintheorem1} $M_{TG}$ is not uniquely defined if $M$ is quasifuchsian, but since $I_{TG}$ is independent from the Fuchsian model considered, we proceed as normal. This remark remains valid for later statements.
\end{remark}

\begin{corollary}[Theorem 1.3]\label{cor:comparisonprofiles}
	Let $M$ be a convex co-compact hyperbolic $3$-manifold that is either acylindrical or quasifuchsian and $\Omega_0$ its outermost region.  If	$V_R(M)>|\Omega_0|$, then $I_M(V)<I_{TG}(V)$ for every volume $V\geq 0$. 
\end{corollary}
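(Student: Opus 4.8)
The plan is to compare the two profiles through their inverse functions, turning the Hawking mass identity into a first-order differential inequality with a definite sign. Write $\chi=\chi(\partial M)<0$. The definition (\ref{hawking mass}) of the Hawking mass is exactly the relation
\begin{equation*}
\big(I_M^{\prime +}(V)\big)^2 = 4 + \frac{8\pi\chi}{I_M(V)} - \frac{4\,m_H(V)}{I_M(V)^{3/2}}.
\end{equation*}
Since the totally geodesic representative has Fuchsian ends, Lemma \ref{hawking mass negative} gives $m_H^{TG}\equiv 0$, so $I_{TG}$ is smooth and satisfies $\big(I_{TG}'(V)\big)^2 = 4 + 8\pi\chi/I_{TG}(V)$; in particular $I_{TG}'<2$ everywhere. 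Under the hypothesis $V_R(M)>|\Omega_0|$ the manifold is not Fuchsian, so Lemma \ref{hawking mass negative} gives $m_H(V)<0$ strictly, whence the last term above is strictly positive and $(I_M^{\prime +})^2 > 4 + 8\pi\chi/I_M$ at every $V$.

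Both profiles are strictly increasing by Lemma \ref{profile increasing}, hence invertible; let $V_M(a),V_{TG}(a)$ be the inverse functions, both defined for $a\ge a_{TG}:=I_{TG}(0)=2\pi|\chi|$ (recall $I_M(0)\le I_{TG}(0)$ by Gauss--Bonnet). Set $G(a)=4+8\pi\chi/a$, positive for $a>a_{TG}$, and $D(a)=V_M(a)-V_{TG}(a)$. From the two relations, $V_{TG}'(a)=1/\sqrt{G(a)}$, while the right derivative of the inverse satisfies $V_M^{\prime +}(a)=1/I_M^{\prime +}(V_M(a))=1/\sqrt{G(a)-4m_H(V_M(a))/a^{3/2}}<1/\sqrt{G(a)}$, because $m_H<0$. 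Thus $D^{\prime +}(a)<0$ for every $a>a_{TG}$. As $D$ is continuous (inverse of continuous increasing functions) with everywhere-negative right derivative, a standard monotonicity criterion shows $D$ is strictly decreasing on $[a_{TG},\infty)$. This is the step that replaces, and avoids, the barrier replacement otherwise needed to handle the corners of $I_M$: only the one-sided relation, which holds exactly by the definition of $m_H$, is used.

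It remains to identify $\lim_{a\to\infty}D(a)$. Using $\lim_{V\to\infty}I_M'(V)=2$ together with the preceding corollary, which gives $\lim_{V\to\infty}\big(I_{TG}(V)-I_M(V)\big)=2\big(V_R(M)-|\Omega_0|\big)=:L$, a short asymptotic computation (put $a=I_M(V)$ and estimate $V-V_{TG}(I_M(V))\approx L/2$) yields $\lim_{a\to\infty}D(a)=L/2=V_R(M)-|\Omega_0|>0$. Since $D$ is strictly decreasing to this positive limit, $D(a)>V_R(M)-|\Omega_0|>0$ for every finite $a\ge a_{TG}$. Translating back, $V_M(a)>V_{TG}(a)$ is equivalent to the desired inequality: for $V\ge 0$ set $a=I_{TG}(V)\ge a_{TG}$, so $V_M(a)>V_{TG}(a)=V$, and monotonicity of $I_M$ gives $I_M(V)<I_M(V_M(a))=a=I_{TG}(V)$; the case $V=0$ is the endpoint $a=a_{TG}$, where $D(a_{TG})>0$ forces the strict inequality $I_M(0)<I_{TG}(0)$.

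The main obstacle is precisely the passage from the asymptotic gap (the preceding corollary) to a \emph{pointwise, strict} inequality, given that $I_M$ is only known to be continuous with one-sided derivatives. The device making this work is the sign input $I_{TG}'<2$, equivalently $m_H^{TG}\equiv 0$ versus $m_H^M<0$: it guarantees the correct sign of $D^{\prime +}$ at every point, so strict monotonicity of $D$ holds globally and the asymptotic positivity propagates back to all volumes. Equivalently, one may phrase the argument as showing that $f(V)=I_M(V)-I_{TG}(V)$ has no interior local maximum with $f\ge 0$: at such a point the common value $p=I_M^{\prime +}=I_{TG}'$ would satisfy $p>2$ by the combined Hawking identity together with $m_H<0$, contradicting $I_{TG}'<2$.
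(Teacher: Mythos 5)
Your proof is correct, but it takes a genuinely different route from the paper's. The paper works directly with the difference $f(V)=I_M(V)-I_{TG}(V)$: using the Hawking mass identity for $I_M$ (with $m_H<0$ strict, by Lemma \ref{hawking mass negative} and the non-Fuchsian hypothesis) against the exact relation $(I_{TG}')^2=4+8\pi\chi/I_{TG}$, it shows $f$ cannot have a non-negative local maximum; since $f<0$ at $V=0$ (Gauss--Bonnet) and as $V\to\infty$ (the preceding corollary), $f<0$ everywhere. Because $I_M$ may have corners, the paper must additionally replace $I_M$ near a putative maximum by the smooth area profile of the equidistant deformation of the isoperimetric surface $\Gamma_{V_0}$ --- a barrier device. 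You instead pass to the inverse functions $V_M,V_{TG}$ and convert the same two Hawking-mass relations into the pointwise first-order inequality $D^{\prime+}(a)=V_M^{\prime+}(a)-V_{TG}'(a)<0$ for $a>a_{TG}$, valid because $(I_M^{\prime+})^2(V_M(a))=G(a)-4m_H/a^{3/2}>G(a)>0$; the Dini-derivative monotonicity criterion then makes $D$ strictly decreasing, and $\lim_{a\to\infty}D(a)=V_R(M)-|\Omega_0|>0$ (from the preceding corollary, with $V_{TG}'\to 1/2$ from the explicit ODE) forces $D>0$ everywhere, hence $I_M<I_{TG}$ after inverting back. Your route buys two things: first, the corner problem evaporates, since $m_H$ is defined via $I_M^{\prime+}$, which exists at every point and yields $V_M^{\prime+}(a)=1/I_M^{\prime+}(V_M(a))$ exactly, so no barrier replacement is needed; second, you obtain the quantitative strengthening $V_M(a)-V_{TG}(a)>V_R(M)-|\Omega_0|$ for all $a\geq a_{TG}$, a volume-gap form of the comparison that the paper's maximum-exclusion argument does not produce. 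One blemish, confined to your closing ``equivalently'' remark: at a non-negative local maximum the identities do not force a common slope $p>2$ (with $\chi<0$ that bound is not available); what they give is $(I_M^{\prime+})^2>4+8\pi\chi/I_M\geq 4+8\pi\chi/I_{TG}=(I_{TG}')^2$, i.e. $I_M^{\prime+}>I_{TG}'$ at the maximum, contradicting the first-order condition $I_M^{\prime+}\leq I_{TG}'$ there. Since that remark is an aside and your main argument never invokes it, the proof stands as written.
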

\begin{proof}
If $V_R(M)> |\Omega_0|$, then $\lim_{V\rightarrow \infty}I_M(V)-I_{TG}(V)>0$. We also have $I_{TG}(0)-I_M(0)>0$ by an application of the Gauss-Bonnet Theorem. The proof of  Theorem \ref{main result} applies verbatim when $I_{TG}(V+|\Omega_0|)$ is  replaced by $I_{TG}(V)$ to show that $I_{TG}-I_M$ cannot have a non-negative local maximum. Therefore, $I_M(V)< I_{TG}(V)$ for every $V$.
\end{proof}

\begin{remark}
	In order to see that Corollary \ref{cor:comparisonprofiles} is not an empty statement, observe that Theorem \ref{Comparison Renormalized volume} (\cite{BBB},\cite{P}) implies $V_R(M)>0$ when $M$ is  quasi Fuchsian (but not Fuchsian) and contains a unique compact minimal surface. This last condition (unique minimal surface) is non-empty, since it contains in its interior the set of \textit{almost-Fuchsian} manifolds (studied by Uhlenbeck \cite{U}) , which are manifolds that contain a minimal surface with principal curvatures $|k_{1,2}|<1$.
\end{remark}

\begin{question}It is natural  to ask what is the reach of Corollary \ref{cor:comparisonprofiles}, particularly since the comparison between $V_R(M)$ and $\Omega_0$ answers if $I_M$ ever surpasses $I_{TG}$ or not.
For which $M$ convex co-compact hyperbolic $3$-manifold we have that $V_R(M)>|\Omega_0|$?
\end{question}

\begin{remark} Results up until this Section hold for relatively acylindrical hyperbolic manifolds $(M^3,S\subseteq\partial M)$ when one considers the definitions of isoperimetric profile and renormalized volume for the set of ends $S\subseteq\partial M$ (for example one can consider the case of a Bers slice). Similarly, one can write the results for convex co-compact manifolds with incompressible boundary (but not necessarily acylindrical of quasifuchsian), with the caviat that the isoperimetric model $I_{TG}$ is replaced by a disjoint union of Fuchsian ends. If the boundary is not incompressible, the existence of an outermost minimal core needs to be assumed for the results to work. We focused in the acylindrical/quasifuchsian case to illustrate the isoperimetric features of renormalized volume while avoiding the technical setup required to establish a more general statement.
\end{remark}

\section{Minkowski inequality for Horospherically convex sets}\label{sec:minkowskiinequality}
\noindent In this section we are interested in studying the geometric objects in $\mathbb{H}^3$ that, according to Epstein's description, correspond to  conformal metrics in $\partial_{\infty}\mathbb{H}^3= \mathbb{S}^2$.
\begin{definition}[\cite{EGM}]
A   hypersurface  $\Sigma$ in 
 $\mathbb{H}^3$ is said to be  \textit{horospherically convex} ($h$-convex) if at every point $\Sigma$ lies locally on one side of its   tangential horosphere. 
\end{definition}

\noindent An oriented surface $\Sigma\subset \mathbb{H}^3$ is horospherically convex at $p\in \Sigma$
if, and only if, all the principal curvatures of $\Sigma$ at $p$ verify simultaneously $\kappa_i(p) < -1$ or
$\kappa_i(p) > -1$, see \cite{EGM}.
Here we assuming that the orientation of $\Sigma$ coincide with the outward orientation of the horosphere tangent to  $\Sigma$ at $p$.  This definition is more general than geodesic convexity. If $\Sigma$ is closed and lies in the concave side of the tangential horosphere at each point for example, then the  principal curvatures  satisfy $\kappa_i>-1$.
\vspace{0.1cm}

\noindent If $\Sigma$ is horospherically convex bounding a compact region $\Omega$, then the  outward exponential map $\psi: \Sigma\times [0,\infty) \rightarrow \mathbb{H}^3- \Omega$, given by $\psi(p,r)=exp(p,rN)$, is a diffeomorphism.  The  family $\Sigma_r=\psi(\Sigma,r)$ are called  the normal flow of  $\Sigma$ . The  foliation $\{\Sigma_r\}$ induces a  metric $h$ at the conformal boundary $\partial_{\infty} \mathbb{H}^3=\mathbb{S}^2$  by
\[
h= \lim_{r\rightarrow \infty} e^{-2r} g_r
\]
where $g_r$ is the first fundamental form of the parallel surface $\Sigma_r$. 
More importantly, for each metric $h$ in the conformal class at infinity  there exist an unique equidistant foliation  such that the associated metric at infinity is $h$. This well known result has its root in the work of  C. Epstein \cite{E} through the envelopes of horospheres construction which we briefly describe:
\vspace{0.1cm}

\noindent Consider the Poincare ball model for $\mathbb{H}^3$. For any $x\in \mathbb{H}^3$ define its \textit{visual metric} $v_x$ as a metric in the conformal $\partial_\infty \mathbb{H}^3 = S^2$ by
\begin{enumerate}
    \item $v_0$ is the canonical round metric in $S^2$.
    \item If $\gamma$ is an isometry of $\mathbb{H}^3$ so that $\gamma(x)=0$, then $v_x=\gamma^*(v_0)$ 
\end{enumerate}
Observe that $v_x$ is well defined because isometries of $\mathbb{H}^3$ fixing the origin $0$ are the isometries of the round metric in $S^2$.
 It is not a hard exercise to see that for $x\in\mathbb{H}^3, b\in S^2$ the set $H(b,v_x(b)) :=\lbrace y\in\mathbb{H}^3\,|\,v_y(b)=v_x(b) \rbrace$ is the horosphere tangent at $b$ passing through $x$.
 \vspace{0.1cm}

\noindent Epstein \cite{E} shows that given a $C^1$ conformal metric $\rho$ in a open set $U\subseteq S^2$, there exists a unique continuous map $Y_\rho:U\rightarrow T^1\mathbb{H}^3$ so that $Y(b)$ is a unit normal vector to $H(b,\rho(b))$ oriented towards $b$. This map satisfies that for $t$ constant, $Y_{e^t\rho}$ is equal to $Y_\rho$ after translating $t$ units by the geodesic flow. Moreover, if $\rho$ is smooth and we fix some compact $K\subset U$, we have that $Y_{e^t}|_K$ is an embedding for sufficiently large $t$. Hence if we take a convex co-compact manifold $M$ and a metric $h$ in the conformal boundary $\partial_\infty M$, the maps $Y_{e^th}(\partial M)$ are well defined, and define a equidistant foliation of the ends of $M$ when $t$ is sufficiently large.
\vspace{0.1cm}

\noindent In the rest of this section we will   see that the $W$-volume is maximized among metrics of  fixed area by constant curvature metrics. This result can also be established  from the work of Osgood, Phillips and Sarnak \cite{OPS}, where they show that constant curvature metrics in $S^2$ maximize $\log(\text{Det}(\Delta))$, the logarithm of the determinant of the Laplace-Beltrami operator. The result for $W$-volume follows because its first variation formula is a constant multiple ($3\pi$ in fact) of the first variation formula of $\log(\text{Det}(\Delta))$. Our proof is based on the renormalized Ricci flow for surfaces.

\subsection{Minkowski-type inequality}

The following Minkowski-type inequality is obtained by comparing the quantity $|\Omega|-\frac12\int_{\partial\Omega} H\,da$ for a given compact region with horospherically convex boundary with that of a geodesic ball via the Renormalized Ricci flow for conformal metrics in $\partial_\infty\mathbb{H}^3=S^2$ (as done in \cite[Section 3, Theorem 2.A]{OPS}, \cite[Section 4]{VP} for $\chi(\Sigma)\leq0$).

\begin{theorem}\label{isoperimetric inequality}
If $\Sigma$ is an horospherically convex surface bounding a compact region $\Omega \subset \mathbb{H}^3$, then
\begin{eqnarray*}
	\int_{\Sigma} H\,d\Sigma - 2|\Omega| \, \geq\, 2\pi \log\bigg(1 + \frac{1}{2\pi}\int_{\Sigma}(H+1)d\Sigma \bigg) 
\end{eqnarray*}
with equality if, and only if, $\Sigma$ is a geodesic sphere.
\end{theorem}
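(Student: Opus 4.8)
The plan is to recognize the left-hand side as $(-2)$ times the $W$-volume of $\mathbb{H}^3$ relative to the conformal metric that $\Sigma$ induces at infinity, and then to bound that $W$-volume by the value realized by the round metric of the same area, which corresponds to a geodesic sphere. Let $\Sigma_r=\psi(\Sigma,r)$ be the normal flow and $h=\lim_{r\to\infty}e^{-2r}g_r$ the induced metric on $\partial_\infty\mathbb{H}^3=S^2$. Since $\Sigma$ is $h$-convex (so all $\kappa_i>-1$), the normal flow is an embedded equidistant foliation; by the uniqueness of equidistant foliations it is the canonical foliation attached to $h$, so the $W$-volume $W(\mathbb{H}^3,h)=|\Omega_r|-\tfrac12\int_{\Sigma_r}H\,da+r\pi\chi(S^2)$ may be evaluated on any leaf. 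Evaluating on $\Sigma=\Sigma_0$, which bounds $\Omega_0=\Omega$, and using $\chi(S^2)=2$ so that the $r$-term vanishes, gives $W(\mathbb{H}^3,h)=|\Omega|-\tfrac12\int_\Sigma H\,da$. Thus the left-hand side of the inequality is exactly $-2\,W(\mathbb{H}^3,h)$.

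Next I would compute the area $A=\mathrm{Area}(S^2,h)$ in terms of $\Sigma$. Expanding the area element of $\Sigma_r$ as in Lemma \ref{cheeger} gives $e^{-2r}\,da_r\to\tfrac14(1+\kappa_1)(1+\kappa_2)\,d\Sigma$; using the Gauss equation $\kappa_1\kappa_2=K+1$ and Gauss--Bonnet (here $\Sigma$ is a topological sphere, since the normal flow is a diffeomorphism from $\Sigma\times[0,\infty)$ onto $\mathbb{H}^3-\Omega$, so $\int_\Sigma K\,d\Sigma=4\pi$), one obtains $A=\tfrac14\int_\Sigma(2+2H+K)\,d\Sigma=\tfrac12\int_\Sigma(H+1)\,d\Sigma+\pi$. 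Hence $A/\pi=1+\tfrac1{2\pi}\int_\Sigma(H+1)\,d\Sigma$, and the claimed inequality is equivalent to $W(\mathbb{H}^3,h)\le-\pi\log(A/\pi)$. I would then check directly that a geodesic sphere of radius $R$ has $A=\pi e^{2R}$ and $W=-2\pi R$, so that $W=-\pi\log(A/\pi)$ holds precisely on the round metrics; this identifies the conjectured extremizer and recovers equality.

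The heart of the argument, and the main obstacle, is the inequality $W(\mathbb{H}^3,h)\le W(\mathbb{H}^3,h_0)$, where $h_0$ is the round (constant curvature) metric of the same area $A$: this is the $S^2$ instance of the statement that the $W$-volume is maximized, among conformal metrics of fixed area, by the constant curvature representative. I would establish it via the area-normalized renormalized Ricci flow on $S^2$, which preserves area and converges to the round metric, by showing $W$ is monotone along the flow with stationary points exactly the constant curvature metrics --- equivalently invoking the extremal property of $\log\det\Delta$ of Osgood--Phillips--Sarnak, to whose first variation that of $W$ is proportional. Combined with the explicit value $W(\mathbb{H}^3,h_0)=-\pi\log(A/\pi)$ this yields $-2\,W(\mathbb{H}^3,h)\ge2\pi\log(A/\pi)$, the desired inequality. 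For rigidity, equality forces $h$ to have constant curvature, hence to be round; by uniqueness of the equidistant foliation the normal flow is then a family of concentric geodesic spheres, so $\Sigma=\Sigma_0$ is itself a geodesic sphere, and the converse was verified above.
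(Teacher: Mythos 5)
Your proposal is correct and takes essentially the same approach as the paper: you identify $\int_\Sigma H\,d\Sigma-2|\Omega|$ with $-2W(h)$ for the Epstein metric $h$ at infinity, compute $\mathrm{Area}(h)=\tfrac12\int_\Sigma (H+1)\,d\Sigma+\pi$ exactly as in Lemma \ref{cheeger}, and prove that $W$ is maximized among fixed-area conformal metrics by the round metric via monotonicity under the area-normalized Ricci flow (equivalently the Osgood--Phillips--Sarnak extremal property), with the round metric corresponding to concentric geodesic spheres giving both the sharp constant and the rigidity. The only cosmetic difference is that you phrase the sharp bound as $W(h)\leq -\pi\log(A/\pi)$ via the closed form $A=\pi e^{2R}$, $W=-2\pi R$, whereas the paper compares directly with a geodesic ball matched so that $|\partial B_r|+\int_{\partial B_r}H_r=|\Sigma|+\int_\Sigma H\,d\Sigma$; these are the same computation.
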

The result without the rigidity statement was also obtained by J. Nat\'{a}rio \cite{N}.

\begin{proof}
Given a horospherically convex domain $\Omega\subset \mathbb{H}^3$, we consider the $W$-volume functional 
\[
W(\Omega)= |\Omega|-\frac12\int_{\Sigma} H\,d\Sigma.
\]
By mean of the correspondence between  equidistant foliation and metrics at infinity, one can prove that the function $W(\Omega)$ depends only on the metric $h\in [\partial \mathbb{H}^3]$. The first variation for $W$ for conformal deformations of $h$ was computed in \cite{KS}:
\[
\delta W(\hat{h})= \frac14 \int_{\mathbb{S}^2} \delta K(\hat{h})\,dvol_h,
\]
where $K$ is the Gauss curvature of $(\mathbb{S}^2,h)$. In other words, the Ricci flow is the gradient-like flow for the functional $W$. In particular, if $\partial_t h_t= (\frac{8\pi}{area(h)} - 2K)h_t$ is the Renormalized Ricci Flow that keeps the area of $h$ constant, then
\[
\delta W(h)= \frac14 \int_{\mathbb{S}^2} \bigg(\Delta K + K\left(2K - \frac{8\pi}{area(h)} \right) \bigg) dvol_h=  \frac12\int_{\mathbb{S}^2} K^2\, dvol_h -   \frac{1}{2area(h)} \bigg(\int_{\mathbb{S}^2} K\,dvol_h\bigg)^2.
\]
It follows from the Cauchy-Schwarz inequality that $\delta W(h)\geq 0$. By the strong convergence results for the Renormalized Ricci flow \cite{H}, we conclude that the $W$-functional has a global maximum among conformal metrics of a fixed area in the conformal boundary $[\partial M]$   at constant curvature metrics. These round metrics correspond to the normal flow of geodesic spheres in $\mathbb{H}^3$.  

If $h$ is the conformal  metric at infinity for  the equidistant foliation associated to  $\Omega$, then Lemma \ref{cheeger} implies that 
\[
area(h)= \frac{1}{2}|\Sigma| + \frac{1}{2} \int_{\Sigma}H\,d\Sigma + \pi.
\]
Therefore, if $B_r$ is a geodesic ball such that $|\partial B_r|+ \int_{\partial B_r} H_r= |\Sigma| + \int_{\Sigma}H\,d\Sigma=4\pi\lambda$, then 
\[
\int_{\Sigma} H\, d\Sigma -2|\Omega| \geq  \int_{\partial B_r} H_r\, dS_r - 2|B_r|.
\]
Note that $H_r= \frac{\cosh(r)}{\sinh(r)}$, $|\partial B_r|= 4\pi \sinh^2(r)$, and $|B_r|= \pi\sinh(2r) - 2\pi r$. Using these formulas we obtain 
\[
\int_{\partial B_r} H_r\, dS_r - 2|B_r|= 4\pi r
\]
and  $\lambda= \cosh(r)\sinh(r) + \sinh^2(r)$. From this  last equality we deduce that $r= \sinh^{-1}\big(\frac{\lambda}{\sqrt{1+2\lambda}}\big)$. Since $\sinh^{-1}(x)= \log \big(\sqrt{1+x^2} + x \big)$, we conclude that $r=\frac{1}{2}\log(1+2\lambda)$. Therefore,
\[
\int_{\Sigma} H\, d\Sigma -2|\Omega| \geq  2\pi \log\bigg(1+ \frac{1}{2\pi}\int_{\Sigma} \big(H+1\big)\,d\Sigma \bigg),
\]
with equality if, and only if, $\Sigma$ is up to a rigid motion the geodesic sphere $\partial B_r$.
\end{proof}

\begin{remark}  For an horospherically convex surface surface $\Sigma$ bounding a compact region $\Omega\subset \mathbb{H}^3$, there is  another sharp Minkowski inequality  \cite{GWW}:
 \begin{equation}\label{Minkowski}
 \int_{\Sigma} H\,d\Sigma\, \geq \,  2\pi \sqrt{\frac{|\Sigma|^2}{4\pi^2}  + \frac{|\Sigma|}{\pi}}
 \end{equation}
 Combining  (\ref{Minkowski}) with the inequality in Theorem \ref{isoperimetric inequality}, we obtain 
\begin{equation}\label{Minkowski II}
	\int_{\Sigma} H\,d\Sigma \,\geq \,  2|\Omega|+  2\pi \log\bigg(1 + \frac{|\Sigma|}{2\pi} + \sqrt{\frac{|\Sigma|^2}{4\pi^2}  + \frac{|\Sigma|}{\pi}} \,\bigg),
\end{equation}
 with equality if, and only if, $\Sigma$ is a geodesic sphere.
  \vspace{0.2cm}
 
 \noindent 
 In contrast with Theorem  \ref{isoperimetric inequality}, whose poof relies on $2$-dimensional features of the renormalized volume, the proof of the Minkowski inequality (\ref{Minkowski}) involves a mean curvature  type flow  and  generalizes to higher dimensions \cite[Theorem 6.1]{GWW}.
\end{remark}

\subsection{Polyakov-type formula}

As stated in \cite{GMS} we have the following Polyakov type formula for conformal metrics in the sphere

\begin{equation}\label{eq:polyakov}
    W(e^{2\omega}h_0) - W(h_0) = -\frac14 \int_{S^2} |\nabla\omega|^2_{h_0} + \text{Scal}_{h_0}\omega \text{dvol}_{h_0}.
\end{equation}
This formula follows, as in the geometrically finite case, by applying the Fundamental Theorem of Calculus and the first variation formula for the $W$-volume to the 1-parameter family of metrics $h_t=e^{2t\omega}h_0$, see Proposition \ref{polyakov} in the Appendix.

As done in \cite[Proposition 3.11]{Schlenker} for the convex co-compact case  with $\chi(\partial M)<0$, we have the following monotonicity for the $W$-volume. In this case $W$-volume is monotone decreasing, contrary to \cite[Proposition 3.11]{Schlenker}, which boils down to the signature of the Euler characteristic of the boundary.

\begin{proposition}\label{prop:Wmono}
Let $h_0, h_1$ be non-negatively curved conformal metrics on $\partial_{\infty}\mathbb{H}^3=S^2$ so that $h_0\leq h_1$ pointwise. Then

\[W(h_0) \geq W(h_1).
\]
Moreover, equality occurs if and only if $h_0=h_1$ pointwise.
\end{proposition}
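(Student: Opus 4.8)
The plan is to reduce the statement to the Polyakov-type formula (\ref{eq:polyakov}) and then analyze the sign of its right-hand side. First, since $h_0$ and $h_1$ are conformal metrics on $S^2$ lying in the same conformal class, I would write $h_1 = e^{2\omega}h_0$ for a smooth function $\omega$ on $S^2$. The hypothesis $h_0\leq h_1$ pointwise is then equivalent to $e^{2\omega}\geq 1$, that is, $\omega\geq 0$ everywhere.

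Next I would apply (\ref{eq:polyakov}) with base metric $h_0$ to obtain
\[
W(h_1)-W(h_0) = -\frac14\int_{S^2}\Big(|\nabla\omega|^2_{h_0}+\text{Scal}_{h_0}\,\omega\Big)\,\text{dvol}_{h_0}.
\]
The key observation is that the integrand is non-negative: the term $|\nabla\omega|^2_{h_0}$ is manifestly so, and since $h_0$ is non-negatively curved we have $\text{Scal}_{h_0}=2K_{h_0}\geq 0$, which together with $\omega\geq 0$ gives $\text{Scal}_{h_0}\,\omega\geq 0$. Hence the integral is non-negative and $W(h_1)-W(h_0)\leq 0$, i.e. $W(h_0)\geq W(h_1)$, as desired.

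For the rigidity statement I would argue as follows. Equality forces the integral above to vanish, and being a sum of two non-negative terms, each must vanish separately. The vanishing of $\int_{S^2}|\nabla\omega|^2_{h_0}\,\text{dvol}_{h_0}$ forces $\omega$ to be a constant $c\geq 0$. The vanishing of the remaining term then reads $c\int_{S^2}\text{Scal}_{h_0}\,\text{dvol}_{h_0}=0$; but by Gauss--Bonnet $\int_{S^2}\text{Scal}_{h_0}\,\text{dvol}_{h_0}=2\int_{S^2}K_{h_0}\,\text{dvol}_{h_0}=4\pi\chi(S^2)=8\pi>0$, so necessarily $c=0$. Therefore $\omega\equiv 0$ and $h_0=h_1$.

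I do not expect any serious obstacle here, since the whole argument is a direct consequence of (\ref{eq:polyakov}) together with elementary sign considerations. The only delicate point is the equality case: one must rule out a nonzero constant conformal factor, and this is exactly where the positivity of the total curvature of $S^2$ enters through Gauss--Bonnet. Note also that only $h_0$ is used to be non-negatively curved; the hypothesis on $h_1$ is not needed for the inequality itself, and serves only to guarantee that $W(h_1)$ is defined via a convex equidistant foliation.
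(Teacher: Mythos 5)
Your proposal is correct and follows essentially the same route as the paper: write $h_1=e^{2\omega}h_0$ with $\omega\geq 0$, apply the Polyakov-type formula, use $\mathrm{Scal}_{h_0}\geq 0$ to get the sign, and in the equality case force $\omega$ constant and invoke Gauss--Bonnet ($\int_{S^2}\mathrm{Scal}_{h_0}\,\mathrm{dvol}_{h_0}=8\pi$) to conclude $\omega\equiv 0$. Your closing observation that only the curvature hypothesis on $h_0$ is used in the argument is accurate and a nice refinement of the statement.
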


\begin{proof}
Define $\omega:S^2\rightarrow\mathbb{R}$ so that $h_1=e^{2\omega}h_0$. Since $h_0\leq h_1$, then $\omega\geq0$. Hence, we have $\text{Scal}_{h_0}\omega\geq0$ pointwise. By the Polyakov-type formula (\ref{eq:polyakov})

\begin{equation}
    W(h_1) - W(h_0) = -\frac14 \int_{S^2} |\nabla\omega|^2_{h_0} + \text{Scal}_{h_0}\omega\, \text{dvol}_{h_0} \leq 0,
\end{equation}
so the inequality follows.

For the equality, note that $\int_{S^2} |\nabla\omega|^2_{h_0}\text{dvol}_{h_0}= \int_{S^2}\text{Scal}_{h_0}\omega \text{dvol}_{h_0}=0$. Hence $\omega$ is a constant function, which  by the Gauss-Bonnet yields $\int_{S^2}\text{Scal}_{h_0}\omega \text{dvol}_{h_0} = 8\pi\omega$. Therefore, $\omega=0$ and $h_0=h_1$ pointwise.
\end{proof}

Proposition \ref{prop:Wmono} can be written in terms of horospherically convex spheres in $\mathbb{H}^3$.

\begin{corollary}\label{monotonicity horospherically convex}
Let $\Sigma_0, \Sigma_1$ be horospherically convex spheres in $\mathbb{H}^3$ bounding regions $\Omega_0, \Omega_1$ so that $\Omega_0 \subset \Omega_1$. If  $\text{Scal}_{\Sigma_0}\geq 0$, then

\[ \int_{\Sigma_0}Hd\Sigma_0 - 2|\Omega_0| \leq \int_{\Sigma_1}Hd\Sigma_1 - 2|\Omega_1|,
\]
where equality occurs if and only if $\Sigma_0, \Sigma_1$ are the same surface.
\end{corollary}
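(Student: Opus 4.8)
The plan is to translate the statement entirely into the language of conformal metrics at infinity and then invoke Proposition \ref{prop:Wmono}. Writing $W(\Omega)=|\Omega|-\tfrac12\int_{\Sigma}H\,d\Sigma$ as in the proof of Theorem \ref{isoperimetric inequality}, the desired inequality $\int_{\Sigma_0}H\,d\Sigma_0-2|\Omega_0|\le\int_{\Sigma_1}H\,d\Sigma_1-2|\Omega_1|$ is exactly $-2W(\Omega_0)\le-2W(\Omega_1)$, i.e. $W(\Omega_0)\ge W(\Omega_1)$. Since $W(\Omega_i)$ depends only on the conformal metric $h_i$ at infinity induced by the normal flow of $\Sigma_i$, it suffices to prove $W(h_0)\ge W(h_1)$. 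I will obtain this from Proposition \ref{prop:Wmono} after verifying its two inputs: that $h_0\le h_1$ pointwise and that $h_0$ has non-negative curvature.

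For the monotonicity $h_0\le h_1$ I would use Epstein's envelope-of-horospheres description recalled in this section. Each $\Sigma_i$ is the envelope of the horospheres $H(b,\rho_i(b))$, $b\in S^2$, where $\rho_i$ is the conformal density determining $h_i$ and $H(b,\rho_i(b))$ is the horosphere tangent at $b$ supporting $\Omega_i$ at the point whose outward normal geodesic limits to $b$. This supporting horosphere is well defined for every $b$ because strict $h$-convexity makes the outward exponential map, hence the hyperbolic Gauss map $\Sigma_i\to S^2$, a diffeomorphism. Because $\Omega_0\subset\Omega_1$, for each direction $b$ the supporting horosphere of $\Omega_1$ is pushed no less far toward $b$ than that of $\Omega_0$; since the level $\rho(b)=v_x(b)$ increases as the horosphere moves toward $b$, this gives $\rho_1(b)\ge\rho_0(b)$, whence $h_0\le h_1$ pointwise. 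As the correspondence between foliations and metrics at infinity is a bijection, $h_0=h_1$ forces $\Sigma_0=\Sigma_1$.

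To check the curvature input I would relate $K_{h_0}$ to the geometry of $\Sigma_0$. If $A$ is the shape operator of $\Sigma_0$ with principal curvatures $\kappa_1,\kappa_2$, the rescaled metrics $e^{-2r}g_r$ of the normal flow converge to $h_0$, so $K_{h_0}=\lim_{r\to\infty}e^{2r}K_{\Sigma_r}$. The identity $\kappa_1^r\kappa_2^r-1=(\kappa_1\kappa_2-1)/[(\cosh r+\kappa_1\sinh r)(\cosh r+\kappa_2\sinh r)]$ then yields the clean formula $K_{h_0}=4(\kappa_1\kappa_2-1)/[(1+\kappa_1)(1+\kappa_2)]$. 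Since $h$-convexity forces $1+\kappa_i>0$ and $\text{Scal}_{\Sigma_0}=2(\kappa_1\kappa_2-1)$, the hypothesis $\text{Scal}_{\Sigma_0}\ge0$ is precisely $K_{h_0}\ge0$. With $h_0\le h_1$ and $K_{h_0}\ge0$ established, the monotonicity of $W$ furnished by Proposition \ref{prop:Wmono} gives $W(h_0)\ge W(h_1)$, hence the inequality; note that the Polyakov-formula argument in that proof uses only the non-negativity of $K_{h_0}$, which is all our hypothesis supplies. Finally its equality clause gives $h_0=h_1$, i.e. $\Sigma_0=\Sigma_1$.

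I expect the main obstacle to be the geometric monotonicity $h_0\le h_1$: making precise, through Epstein's construction, that an inclusion of $h$-convex bodies corresponds to a pointwise increase of the associated conformal densities requires carefully fixing orientations and the level convention for $v_x(b)$, and confirming that strict $h$-convexity guarantees a well-defined supporting horosphere, and hence density $\rho_i(b)$, at every $b\in S^2$. By contrast the curvature computation is a routine limit, and the reduction to Proposition \ref{prop:Wmono} is bookkeeping.
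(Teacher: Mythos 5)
Your proposal is correct and follows essentially the same route as the paper: reduce to the $W$-volume monotonicity of Proposition \ref{prop:Wmono} by verifying $h_0\leq h_1$ from the inclusion of the regions (outer-tangent horospheres to $\Sigma_1$ miss $\Sigma_0$, i.e.\ your density comparison $\rho_0\leq\rho_1$) and by identifying the sign of $\mathrm{Scal}_{h_0}$ with that of $-1+\kappa_1\kappa_2$, hence with $\mathrm{Scal}_{\Sigma_0}\geq 0$, the constant in your curvature formula differing from the paper's only by the normalization of the limit metric and being irrelevant to the sign. Your observation that the proof of Proposition \ref{prop:Wmono} uses only the non-negative curvature of $h_0$ (not of $h_1$) is correct and in fact tacitly needed by the paper as well, since the corollary imposes no curvature hypothesis on $\Sigma_1$.
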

\begin{proof}
Let $h_0, h_1$ be the conformal metrics in $\partial_{\infty}\mathbb{H}^3=S^2$ corresponding to $\Sigma_0, \Sigma_1$, respectively. Since $\Sigma_0\subset\Omega_1$ then $h_0\leq h_1$, because any outer-tangent horosphere to $\Sigma_1$ will not intersect $\Sigma_0$. 

If $k_{1,2}(p)$ are the principal curvatures of $\Sigma_0$, then the scalar curvature at $p^+$ (point at infinity whose outer-tangent horosphere to $\Sigma_0$ is tangent at $p$) is given by $\frac{-1+k_1(p)k_2(p)}{(1+k_1(p))(1+k_2(p))}$. Hence $\text{Scal}_{h_0}(p^+)\geq 0$ if and only if $(-1+k_1(p)k_2(p))\geq 0$, which by Gauss equation is equivalent to $\text{Scal}_{\Sigma_0}(p)\geq0$.

Hence we have met the conditions of Proposition $\ref{prop:Wmono}$, from which the result follows.
\end{proof}

\noindent Finally, let's observe how the Polyakov  formula (\ref{eq:polyakov}) relates to Theorem \ref{isoperimetric inequality}. Assume  that $h_0$ is a conformal metric in $\partial_{\infty}\mathbb{H}^3=S^2$ with constant scalar curvature $K>0$ and take $\omega:S^2\rightarrow\mathbb{R}$ so that $\int_{S^2} (e^{2\omega}-1) \text{dvol}_{h_0}=0$.  In other words,  the conformal metric $h_1=e^{2\omega}h_0$ and  the constant curvature metric $h_0$ have the same area. As detailed in \cite[Lemma7]{CM}, this assumption implies that $\int_{S^2} \omega d\text{vol}_{h_0}\leq 0$. The proof of Theorem \ref{isoperimetric inequality} show  for $h_1=e^{2\omega}h_0$ that $W(h_1)\leq W(h_0)$ with equality if, and only if, $h_1$ has constant scalar curvature $K$.  Note that unlike  the convex co-compact case, this fact does not follow from (\ref{eq:polyakov}). Consequently, the Polyakov formula (\ref{eq:polyakov}) yields:

\begin{corollary}
	Let  $h_0$ be the round metric in $\partial_{\infty}\mathbb{H}^3=\mathbb{S}^2$ with constant Gauss curvature $1$, and take $\omega:S^2\rightarrow\mathbb{R}$ so that $\int_{S^2} (e^{2\omega}-1) \text{dvol}_{h_0}=0$. Then
	\[ \Bigg| \int_{\mathbb{S}^2} 2\omega\, \textrm{dvol}_{h_0} \Bigg| \leq \int_{\mathbb{S}^2} |\nabla\omega|^2_{h_0}\, \text{dvol}_{h_0}
	\]
	Moreover, equality occurs if and only if $h_1=e^{2\omega}h_0$ has constant Gauss curvature $1$, or equivalently, $e^{2\omega}$ is given by the derivative of a M\"{o}bius transformation.
\end{corollary}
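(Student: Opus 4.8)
The plan is to obtain the inequality directly from the Polyakov-type formula (\ref{eq:polyakov}), feeding in the two facts recalled in the paragraph that precedes the statement. Since $h_0$ has constant Gauss curvature $1$, its scalar curvature is the constant $\text{Scal}_{h_0}\equiv 2$, so applying (\ref{eq:polyakov}) to $h_1=e^{2\omega}h_0$ reads
\[
W(h_1)-W(h_0) \;=\; -\frac14\int_{S^2}\Big(|\nabla\omega|^2_{h_0}+2\omega\Big)\,\text{dvol}_{h_0}.
\]
First I would rearrange this into $\int_{S^2}|\nabla\omega|^2_{h_0}\,\text{dvol}_{h_0}+\int_{S^2}2\omega\,\text{dvol}_{h_0} = -4\big(W(h_1)-W(h_0)\big)$, so that the desired estimate becomes a statement about the sign of the right-hand side and the sign of $\int_{S^2}\omega\,\text{dvol}_{h_0}$.

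Next I would invoke the two inputs. From the proof of Theorem \ref{isoperimetric inequality} (the maximization of $W$ among conformal metrics of fixed area via the renormalized Ricci flow), the area constraint $\int_{S^2}(e^{2\omega}-1)\,\text{dvol}_{h_0}=0$ forces $W(h_1)\le W(h_0)$; hence the right-hand side above is non-negative and $\int_{S^2}|\nabla\omega|^2_{h_0}\,\text{dvol}_{h_0}\ge -\int_{S^2}2\omega\,\text{dvol}_{h_0}$. From \cite[Lemma7]{CM} the same area constraint yields $\int_{S^2}\omega\,\text{dvol}_{h_0}\le 0$, so $-\int_{S^2}2\omega\,\text{dvol}_{h_0}=\big|\int_{S^2}2\omega\,\text{dvol}_{h_0}\big|$. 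Substituting the latter into the former gives precisely the claimed inequality.

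For rigidity, note that the only genuine inequality used is $W(h_1)\le W(h_0)$, while the sign statement $\int_{S^2}\omega\,\text{dvol}_{h_0}\le0$ holds automatically and imposes no extra constraint; thus equality in the corollary is equivalent to $W(h_1)=W(h_0)$, which by the equality case in the proof of Theorem \ref{isoperimetric inequality} holds if and only if $h_1$ has constant Gauss curvature $1$. I would finish by recalling the classical uniformization fact that the conformal metrics on $S^2$ of constant curvature $1$ are exactly the pullbacks $\phi^\ast h_0$ by M\"{o}bius transformations $\phi$, whose conformal factor relative to $h_0$ is the derivative of $\phi$, which is the stated characterization of $e^{2\omega}$. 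The deduction is short, so I do not expect a real obstacle: the only care needed is sign bookkeeping — checking that $\text{Scal}_{h_0}=2$ produces exactly the term $2\omega$, that the maximization supplies $W(h_1)\le W(h_0)$ in the direction that makes the gradient term dominate, and that $\int_{S^2}\omega\,\text{dvol}_{h_0}\le0$ converts $-\int_{S^2}2\omega\,\text{dvol}_{h_0}$ into the absolute value on the nose. The substantive content lives entirely in the two imported facts, both of which are already available to us.
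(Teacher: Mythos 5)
Your proposal is correct and is precisely the argument the paper intends: the paragraph preceding the corollary sketches exactly this deduction (Polyakov formula with $\text{Scal}_{h_0}=2$, the bound $W(h_1)\leq W(h_0)$ with rigidity from the renormalized Ricci flow proof of Theorem \ref{isoperimetric inequality}, and $\int_{S^2}\omega\,\text{dvol}_{h_0}\leq 0$ from \cite[Lemma 7]{CM}), deferring details to \cite[Section 2.3]{OPS}. Your sign bookkeeping, including the observation that equality reduces to $W(h_1)=W(h_0)$ because the sign of $\int_{S^2}\omega\,\text{dvol}_{h_0}$ is automatic, matches the paper's reasoning.
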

\begin{remark}
	A complete proof of this result with the line of reasoning mentioned above is done in \cite[Section 2.3]{OPS} using the $\log(\text{Det}(\Delta))$ functional. As mentioned earlier,  the first variation formula of $\log(\text{Det}(\Delta))$ is a constant multiple  of the first variation formula of the $W$-volume.
\end{remark}

\section{Appendix}

\subsection{Polyakov formula for the $W$-volume} In this section we prove the Polyakov formula from the first variation formula for the $W$-volume as a functional on the conformal boundary class   at infinity.
\begin{proposition}\label{Krasnov-Schlenker}
The  $W$-volume of a  convex set $\Omega$ inside a convex co-compact hyperbolic $3$-manifold $M$  depends only on the metric at   infinity $h$ associated to $\Omega$. Moreover, the first variation  among  conformal deformations  is \[
	\delta W(\hat{h})= \frac{1}{4} \int_{\partial M} \delta K(\hat{h})\,dh.
	\]
\end{proposition}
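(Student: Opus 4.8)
The plan is to treat the two assertions separately. For the claim that $W$ depends only on $h$, I would first show that the right-hand side of \eqref{RenVol} is genuinely independent of the foliation parameter $r$, and then invoke the Epstein correspondence recalled above, by which a conformal metric $h$ determines a unique equidistant foliation: any convex set inducing $h$ is then a leaf of that foliation, and all leaves return the same value of $W$. To prove $r$-independence I would differentiate \eqref{RenVol} along the unit-normal flow, using three inputs: the first variation of volume $\frac{d}{dr}\mathrm{vol}(M_r)=|\partial M_r|$; the first variation of the area element $\frac{d}{dr}\,da=2H\,da$; and the Riccati equation $A'=\mathrm{Id}-A^2$ for the shape operator in $\mathbb{H}^3$ (equivalently $\frac{d}{dr}H=1-\tfrac12|A|^2$ in the convention $2H=\mathrm{tr}\,A$). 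Combining these with the Gauss equation $\det A=1+K$ gives $\frac{d}{dr}(H\,da)=(2+K)\,da$, so that $\frac{d}{dr}\big(\tfrac12\int_{\partial M_r}H\,da\big)=|\partial M_r|+\pi\chi(\partial M)$ by Gauss--Bonnet. Since $\frac{d}{dr}\big(r\pi\chi(\partial M)\big)=\pi\chi(\partial M)$, the three contributions cancel and $\frac{d}{dr}W=0$.

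For the first variation I would work with the representative $W=\mathrm{vol}(\Omega)-\tfrac12\int_{\partial\Omega}H\,da$ and compute $\delta W$ under an \emph{arbitrary} normal variation $\phi N$ of the surface $S=\partial\Omega$, before specializing to conformal changes at infinity. Here $\delta\,\mathrm{vol}(\Omega)=\int_S\phi\,da$, while the linearized mean curvature $\delta(2H)=-\Delta\phi-(|A|^2-2)\phi$ and the relation $\delta(da)=2H\phi\,da$ combine, after discarding the exact term $\int_S\Delta\phi\,da=0$ and using $2H^2=\tfrac12|A|^2+\det A$ together with the Gauss equation, to give $\delta\big(\tfrac12\int_S H\,da\big)=\int_S\phi\,da+\tfrac12\int_S K\phi\,da$. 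The two copies of $\int_S\phi\,da$ cancel, leaving the clean identity $\delta W=-\tfrac12\int_S K\phi\,da$, where $K$ is the intrinsic Gauss curvature of $S$.

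It remains to turn this into the conformal formula. Given a conformal deformation $\hat h=2\omega h$, realized by the family $h_t=e^{2t\omega}h$ and the associated Epstein foliations, I would evaluate $\delta W=-\tfrac12\int_{\partial M_r}K\phi_r\,da$ on a leaf $\partial M_r$ deep in the end; since $\delta W$ is $r$-independent, its value equals the limit of the right-hand side as $r\to\infty$. There $g_r\approx e^{2r}h$ makes the scale-invariant combination $K\,da$ converge to $K_h\,dh$, while the normal displacement realizing the conformal factor $e^{2\omega}$ satisfies $\phi_r\to\omega$; hence $\delta W=-\tfrac12\int_{\partial M}\omega K_h\,dh$. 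Finally the conformal law $\delta K(\hat h)=-2\omega K_h-\Delta_h\omega$ with $\int_{\partial M}\Delta_h\omega\,dh=0$ yields $\tfrac14\int_{\partial M}\delta K(\hat h)\,dh=-\tfrac12\int_{\partial M}\omega K_h\,dh=\delta W$, as claimed, and integrating this identity reproduces the Polyakov formula \eqref{eq:polyakov} as a consistency check. The main obstacle is precisely this last step: rigorously identifying the normal displacement $\phi_r$ of the far leaves with the conformal factor $\omega$, and showing that the tangential (diffeomorphism) part of the variation induced by Epstein's construction does not contribute. This requires the asymptotic expansion $g=x^{-2}(dx^2+h_0+h_2x^2+\dots)$ with $\mathrm{tr}_{h_0}h_2=K$, controlling $\phi_r$ and $K\,da$ to leading order, together with the invariance of $W$ under reparametrizations of $\partial_\infty\mathbb{H}^3$; alternatively one may cite the boundary first-variation formula of Krasnov--Schlenker \cite{KS} and carry out only the conformal rewriting of the third step.
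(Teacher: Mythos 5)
Your proposal is correct in substance, but it takes a genuinely different route from the paper, whose entire proof of this proposition is the citation ``See Krasnov--Schlenker \cite[Section 7]{KS}'': you reconstruct the argument from first principles instead. Your computations check out: with $H$ the average of the principal curvatures, the Riccati equation $A'=\mathrm{Id}-A^2$, the Gauss equation $\det A=1+K$, and Gauss--Bonnet do give $\frac{d}{dr}W=0$ for the $r$-corrected quantity \eqref{RenVol}; and the normal first-variation bookkeeping ($\delta(2H)=-\Delta\phi-(|A|^2-2)\phi$, $\delta(da)=2H\phi\,da$, $2H^2=\tfrac12|A|^2+\det A$) correctly yields the clean identity $\delta W=-\tfrac12\int_S K\phi\,da$, which, after the conformal law $\delta K=-2\omega K_h-\Delta_h\omega$ and $\int_{\partial M}\Delta_h\omega\,dh=0$, matches the stated formula and is consistent with the paper's own Polyakov computation in Proposition \ref{polyakov}. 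What the paper's citation buys is exactly the step you flag as your main obstacle: in Krasnov--Schlenker the $W$-volume is expressed directly through the metric at infinity via the Epstein parametrization, so the identification of the normal displacement $\phi_r$ of far leaves with the conformal factor $\omega$ (with errors decaying fast enough that $\int_{\partial M_r} K_r\phi_r\,da_r\rightarrow\int_{\partial M} K_h\,\omega\,dh$), and the vanishing contribution of the tangential part of the variation, come built in; your fallback of citing \cite{KS} for precisely that boundary identification is legitimate, and with it your argument is complete, while your intermediate identity $\delta W=-\tfrac12\int_S K\phi\,da$ is a nice self-contained statement the paper never writes down. Two small points of care: first, ``all leaves return the same value of $W$'' is true only for the corrected expression \eqref{RenVol} --- the uncorrected $|\Omega_r|-\tfrac12\int H\,da$ changes linearly in $r$ --- which is consistent with your opening sentence but worth making explicit; second, the normalization matters, since rescaling $h\mapsto e^{2t}h$ shifts the foliation parameter and changes $W$ by $-t\pi\chi(\partial M)$, so ``depends only on $h$'' presupposes anchoring the parameter so that $h=\lim_{r\to\infty}e^{-2r}g_r$ is computed from $\partial\Omega$ itself, with Epstein uniqueness then pinning down the convex set.
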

\begin{proof}
	See Krasnov-Schlenker \cite[Section 7]{KS}.
\end{proof}
\begin{proposition}\label{polyakov}
The $W$-volume satisfies the Polyakov-type formula:
\[
W(e^{2\omega}h_0) - W(h_0)= - \frac{1}{4} \int_{\partial M} |\nabla\omega|^2_{h_0} + \text{Scal}_{h_0}\omega\, \text{dvol}_{h_0}.
	\]
\end{proposition}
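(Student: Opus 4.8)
The plan is to integrate the first variation formula of Proposition \ref{Krasnov-Schlenker} along the conformal path $h_t = e^{2t\omega}h_0$, $t\in[0,1]$, which joins $h_0$ to $h_1 = e^{2\omega}h_0$. By the Fundamental Theorem of Calculus, $W(e^{2\omega}h_0) - W(h_0) = \int_0^1 \tfrac{d}{dt}W(h_t)\,dt$. At each time $t$ the tangent vector to the path is the infinitesimal conformal deformation $\hat h_t = \tfrac{d}{dt}h_t = 2\omega\,h_t$, so Proposition \ref{Krasnov-Schlenker} gives $\tfrac{d}{dt}W(h_t) = \tfrac14\int_{\partial M}\delta K(\hat h_t)\,\text{dvol}_{h_t}$, where $\delta K(\hat h_t)$ is the first variation of the Gauss curvature of $(\partial M, h_t)$ in the direction $\hat h_t$. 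The entire computation then reduces to the conformal variation of curvature together with careful bookkeeping of the conformal factors $e^{2t\omega}$.

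First I would record the standard two–dimensional conformal rules, fixing conventions so that $\Delta = \operatorname{div}\operatorname{grad}$ (hence $\int u\,\Delta u = -\int|\nabla u|^2$). Under an infinitesimal conformal variation $\hat h = 2\phi\,h$ the first variation of Gauss curvature is $\delta K = -2\phi\,K_h - \Delta_h\phi$, so with $\phi = \omega$ we obtain $\delta K(\hat h_t) = -2\omega\,K_{h_t} - \Delta_{h_t}\omega$. Since $\partial M$ is closed, the divergence theorem kills $\int_{\partial M}\Delta_{h_t}\omega\,\text{dvol}_{h_t}$, leaving $\tfrac{d}{dt}W(h_t) = -\tfrac12\int_{\partial M}\omega\,K_{h_t}\,\text{dvol}_{h_t}$. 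The key simplification is that the curvature density $K_{h_t}\,\text{dvol}_{h_t}$ is linear in $t$: combining $\text{dvol}_{h_t} = e^{2t\omega}\,\text{dvol}_{h_0}$ with the conformal rule $K_{h_t} = e^{-2t\omega}\big(K_{h_0} - t\,\Delta_{h_0}\omega\big)$, the exponentials cancel and $K_{h_t}\,\text{dvol}_{h_t} = \big(K_{h_0} - t\,\Delta_{h_0}\omega\big)\text{dvol}_{h_0}$.

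It then remains to integrate in $t$ over $[0,1]$. The $t$-independent term contributes $-\tfrac12\int_{\partial M}\omega\,K_{h_0}\,\text{dvol}_{h_0}$, and the linear term integrates to $\tfrac14\int_{\partial M}\omega\,\Delta_{h_0}\omega\,\text{dvol}_{h_0} = -\tfrac14\int_{\partial M}|\nabla\omega|^2_{h_0}\,\text{dvol}_{h_0}$ after one integration by parts. Converting the Gauss curvature to scalar curvature via $\text{Scal}_{h_0} = 2K_{h_0}$ turns the first term into $-\tfrac14\int_{\partial M}\text{Scal}_{h_0}\,\omega\,\text{dvol}_{h_0}$, and adding the two pieces yields exactly the asserted Polyakov formula. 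I do not expect a genuine conceptual obstacle: all the substance sits in Proposition \ref{Krasnov-Schlenker}, and the rest is routine. The only places where a sign or a factor could slip are the sign conventions in $\delta K = -2\phi K - \Delta\phi$ and in $K_{h_t} = e^{-2t\omega}(K_{h_0} - t\Delta_{h_0}\omega)$, the identity $\text{Scal} = 2K$ in dimension two, and the direction of the integration by parts; I would therefore pin down these conventions explicitly at the start so that the final constants match.
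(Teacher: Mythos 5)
Your proof is correct and follows essentially the same route as the paper's: integrate the first variation formula of Proposition \ref{Krasnov-Schlenker} along the path $h_t=e^{2t\omega}h_0$, use the two-dimensional conformal transformation of the Gauss curvature, discard the Laplacian term by the divergence theorem on the closed surface, and integrate by parts in $t$ and on $\partial M$ to produce the $-\tfrac14\int|\nabla\omega|^2_{h_0}$ term, finishing with $\text{Scal}_{h_0}=2K_{h_0}$. The only cosmetic difference is that the paper differentiates the explicit conformal formula $K_{h_t}=e^{-2t\omega}\big(K_{h_0}-t\,\Delta_{h_0}\omega\big)$ directly in $t$, whereas you invoke the pointwise variation $\delta K(2\omega h_t)=-2\omega K_{h_t}-\Delta_{h_t}\omega$ and then use linearity of the curvature density $K_{h_t}\,\text{dvol}_{h_t}$ in $t$; these are the same computation reorganized.
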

\begin{proof}
	Let $h_t=e^{2t\omega}h_0$ and $t\in[0,1]$. Applying  the Fundamental Theorem of Calculus and Proposition \ref{Krasnov-Schlenker}, we obtain
	\begin{eqnarray*}
W(e^{2\omega}h_0) - W(h_0)&=& \int_{0}^{1} \delta W(h_t)(2\omega \,h_t)\, dt=   \frac{1}{4}\int_{0}^{1}	\int_{\partial M} \delta K_{h_t}(2\omega \,h_t) dh_t\, dt \\
&=& \frac{1}{4} \int_{0}^{1} \int_{\partial M} \big(-2\omega e^{-2t\omega} K +2\omega e^{-2t\omega}\Delta (t\omega)- e^{-2t\omega}\Delta \omega  \big) dh_t\, dt \\
&=& \frac{1}{4} \int_{0}^{1} \int_{\partial M} \big( -2\omega\,K + 2t\,\omega \Delta \omega \big) dh_0\, dt  \\
&=& \frac{1}{4} \int_{\partial M} \big( -2\omega\, K- |\nabla \omega|^2\big)\,dh_0.
	\end{eqnarray*} 
\end{proof}
\subsection{Free boundary stability  for cmc surfaces between two parallel geodesic planes}
 In this section we study the isoperimetric problem for  regions bounded by two  geodesic planes in $\mathbb{H}^3$. In contrast with previous works for slabs,  the Alexandrov reflection principle is not available in this setting. Instead,  we   exploit the reflections across the   geodesic boundary and a result of Hsiang \cite{H} to  reduce the problem to rotationally invariant surfaces. With this simplification, we are able to  extend a characterization result of Ritor\'{e}-Ros \cite{RR} to the free boundary case discussed here. As the classical result for slabs in $\mathbb{R}^3$, we will  show  that  geodesic spheres and tubes about geodesics are the only  solutions.  The  isoperimetric problem in cyclic quotients of $\mathbb{H}^3$ is also treated.

\noindent We start describing  the basic cyclic  actions in $\mathbb{H}^3$ by isometries  and the main model for the slabs between geodesic planes. Let $G_a$ and $G_{\lambda}$ be cyclic subgroups of  $Iso(\mathbb{H}^3)$    generated respectively by the isometries  \[\gamma_1(x,y,z)=(x+a_1,y+a_2,z) \quad \text{and} \quad \gamma_2(x,y,z)=(\lambda (x-x_0), \lambda (y-y_0), \lambda z).\]

\noindent Let $M$ denote either  the slab bounded by two vertical planes with Euclidean distance  $a$ or the slab bounded by two concentric hemispheres perpendicular to $\partial \mathbb{R}_{+}^3$ with Euclidean distance  $\lambda$. Without loss of generality, we can assume that $M$ is  the fundamental domain of the cyclic group $G_a$ or $G_{\lambda}$, respectively.

\begin{theorem}\label{stable cmc between geodesic planes}
Let $\Sigma$ be a free boundary stable constant mean curvature surface in $M$. If the distance between the geodesic  planes of $\partial M$ is positive,  then $\Sigma$ is either a geodesic hemisphere or a tube about a free boundary geodesic connecting the two planes. If the distance is zero, then $\Sigma$ is a geodesic hemisphere.
\end{theorem}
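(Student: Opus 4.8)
The plan is to remove the free boundary by reflection, use the resulting symmetry together with Hsiang's theorem to reduce to a rotationally invariant surface, and finally classify such surfaces by the Ritor\'{e}--Ros analysis.

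\emph{Reflection and doubling.} Since each component of $\partial M$ is totally geodesic and $\Sigma$ is free boundary, $\Sigma$ meets $\partial M$ orthogonally along $\partial\Sigma$. Reflection across a totally geodesic plane is an isometry of $\mathbb{H}^3$, so I would reflect $\Sigma$ across the planes of $\partial M$ that it touches; orthogonality guarantees that $\Sigma$ together with its reflection is a $C^{\infty}$ surface with no corner and the same constant mean curvature. If $\partial\Sigma$ lies on a single plane, one reflection produces a closed CMC surface $\hat\Sigma\subset\mathbb{H}^3$; if $\partial\Sigma$ meets both planes, iterating the two reflections generates a group $\Gamma<\mathrm{Iso}(\mathbb{H}^3)$ whose orientation-preserving subgroup is the cyclic group $G_a$ (when the distance is zero the planes are asymptotic and $G$ is parabolic) or $G_{\lambda}$ (when the distance is positive the planes are ultraparallel and $G$ is hyperbolic with axis the common perpendicular $\gamma$), and $\hat\Sigma$ is a complete $\Gamma$-invariant CMC surface descending to a closed CMC surface in the quotient. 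In either case the free boundary stability inequality for $\Sigma$ is exactly the stability inequality for $\hat\Sigma$ evaluated on $\Gamma$-symmetric test functions.

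\emph{Reduction to rotational symmetry.} In the positive-distance case the quotient carries the cohomogeneity-one action by rotations about $\gamma$, while in the single-reflection case $\hat\Sigma$ is a compact stable CMC surface in $\mathbb{H}^3$ itself. I would invoke Hsiang's symmetrization result \cite{H} to conclude that the stable surface is invariant under the ambient rotations, i.e. $\hat\Sigma$ is a surface of revolution. Concretely, the normal component $u=\langle X,N\rangle$ of the rotational Killing field $X$ is a Jacobi field with $\int u\,d\hat\Sigma=0$ satisfying the free boundary condition; stability places $u$ in the nullity of the second variation, and a first-eigenfunction/nodal-domain analysis as in Hsiang forces $u\equiv0$, so that $X$ is everywhere tangent and $\hat\Sigma$ is rotationally invariant.

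\emph{ODE classification.} Rotationally invariant CMC surfaces in $\mathbb{H}^3$ form the Delaunay-type family, governed by a conserved flux in the profile ODE: geodesic spheres, equidistant tubes about $\gamma$, unduloids and nodoids. Following Ritor\'{e}--Ros \cite{RR}, the compact ones are geodesic spheres by Hopf's theorem, while the axial Killing field along $\gamma$ yields a Jacobi field that changes sign over each period of the profile, so the unduloids and nodoids carry a sign-changing admissible variation and are unstable. The only stable possibilities are then the geodesic sphere, which undoubles to a geodesic hemisphere, and the tube about $\gamma$, which undoubles to a tube about the free boundary geodesic connecting the two planes. When the distance is zero the quotient $\mathbb{H}^3/G_a$ is cusped and has no closed core geodesic, so the tube family does not arise and only the geodesic hemisphere survives; this yields the stated dichotomy.

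\emph{Main difficulty.} The delicate point is the reduction to rotational symmetry for a surface that is only stable and not area-minimizing: Hsiang's symmetrization is classically phrased for minimizers, and reflecting $\Sigma$ only transfers free boundary stability to stability under $\Gamma$-symmetric variations. I would therefore need to verify that this symmetric stability suffices to run the eigenfunction argument forcing $u\equiv0$, and to treat the parabolic case with care, since there the cohomogeneity-one orbits are horospheres rather than tubes and one must rule out stable competitors wrapping the cusp. Checking the smooth matching across the reflected boundary and the admissibility of $u$ along the free boundary are the remaining technical points.
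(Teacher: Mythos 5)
Your skeleton coincides with the paper's (reflect across the totally geodesic boundary, extend by the cyclic group, symmetrize via Hsiang, classify \`a la Ritor\'e--Ros), but the mechanism you propose for the crucial symmetrization step is wrong, and it is exactly the step you flag as the ``main difficulty.'' You claim that $u=\langle X,N\rangle$, the normal component of the rotational Killing field, is a mean-zero Jacobi field and that ``stability places $u$ in the nullity, and a first-eigenfunction/nodal-domain analysis as in Hsiang forces $u\equiv0$.'' Stability never forces a Jacobi field to vanish --- it only gives $Q(u,u)=0$, i.e.\ $u$ lies in the nullity, which can be nontrivial --- and there are explicit stable non-rotational competitors in this very setting: a small geodesic hemisphere centered at a point of one boundary plane \emph{off} the common perpendicular $\gamma$ is free boundary stable (it is isoperimetric for small volumes), yet the rotation field about $\gamma$ has $u\not\equiv0$ on it. So the inference ``stability $\Rightarrow u\equiv 0$'' is false, and your reduction to surfaces of revolution has no proof as written. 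Moreover your stated worry is misplaced: Hsiang's theorem \cite{H} is not a symmetrization result for minimizers but an Alexandrov moving-planes theorem for \emph{complete, properly embedded, cylindrically bounded} CMC surfaces in $\mathbb{H}^3$, with no variational hypothesis whatsoever. That is how the paper uses it: $\Sigma$ is compact and embedded, the group generated by the squared reflections is $G_\lambda$, which translates along $\gamma$ and hence keeps the doubled surface $\hat\Sigma$ within bounded distance of $\gamma$, so Hsiang applies directly and your ``main difficulty'' dissolves once the theorem is quoted with its actual hypotheses (note embeddedness is used essentially here, whereas your nullity argument tried to do without it).

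Where stability genuinely enters is only in the final classification, and there your sketch also diverges from, and is weaker than, the paper's argument. The paper rules out the unduloid-type rotational solutions by the Ritor\'e--Ros test-function method \cite{RR}: the Hopf differential gives a flat conformal metric $b\,|A^{2,0}|$ with $b^2=4(-1+H^2)$ (and $H>1$ by comparison with a flat tube), the conformal factor $\omega$ satisfies the sinh-Gordon equation and shares the sign of $K$, and plugging $f=a_i\sinh\omega$ on the sign domains of $\omega$ into the second variation yields $I(f,f)<0$ --- with the free boundary terms $\int_{\partial\Sigma}\Pi_{\partial M}(N,N)f^2$ and $\int_{\partial\Sigma}f\,\partial f/\partial\nu$ vanishing precisely because $\partial M$ is totally geodesic and $\omega$ is reflection-symmetric --- so $K$ has a sign and Gauss--Bonnet forces $\Sigma$ flat, i.e.\ a tube. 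Your alternative, instability of unduloids/nodoids from the sign-changing normal component of the axial Killing field, is a legitimate classical route but needs the Koiso-type gluing across nodal domains together with the volume constraint and the free-boundary admissibility of the cut-off fields, none of which you supply. Finally, in the zero-distance case the deck transformation is parabolic, $\hat\Sigma$ is \emph{not} cylindrically bounded, and Hsiang as you invoke it does not apply; you flag this but do not resolve it, whereas the conclusion there rests on Alexandrov's theorem for one boundary component together with the nonexistence of a free boundary geodesic connecting asymptotic planes, which excludes the tube.
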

\begin{proof}
 If  $\partial \Sigma$ has only one component, then $\Sigma$ is a geodesic hemisphere by Alexandrov's Theorem. Since $\partial M$ is totally geodesic, we can apply a hyperbolic reflection across one boundary component to obtain another free boundary  surface between hyperbolic planes. This compact extended surface can now be iterated infinity many times,  using the isometry $\gamma_i\circ \gamma_i$, $i=1,2$, to obtain a  complete  cmc surface $\hat{\Sigma}$ properly embedded in $\mathbb{H}^3$ that is cylindrically bounded.    By  Hsiang \cite{H}, $\hat{\Sigma}$ is a rotationally invariant surface and so does $\Sigma$. The result   will  follow from Ritor\'{e}-Ros \cite{RR}  as  sketched below:
 
The Hopf holomorphic quadractic differential applied to $(\Sigma,ds^2)$ implies that the metric $ds_0^2= b|A^{2,0}|$, $b^2=4(-1+H^2)$, is flat and conformal do the metric $ds^2$. Note by the maximum principle comparison with a flat tube  that $H>1$. If $G$ is the cyclic subgroup generated by $\gamma_i\circ \gamma_i$, then $\hat{\Sigma}/G$ has genus one in $\mathbb{H}^3/G$ and, hence, is without umbilical points. In particular, $ds_0^2$ is a smooth metric.  If we write $ds^2=\frac{e^{2\omega}}{b^2}\, ds_0^2$, then it is well known that $\omega$ satisfies the Sinh-Gordon equation \[\Delta_0 \omega + \sinh(\omega)\cosh(\omega)=0\]
It is noted in \cite{RR} that $\omega$ and the Gauss curvature $K$ of $\Sigma$ share the same sign. Arguing by contradiction, let us assume that  $\Omega_{1}$ and $\Omega_{2}$  are the sign components of $\omega$. Ritor\'{e}-Ros \cite{RR} considered the test function $f=a_1\sinh(\omega)$  in $\Omega_{1}$ and $f= a_2\sinh(\omega)$ in $\Omega_{2}$; $a_1$ and $a_2$ are chosen so that $f$ has mean zero on $\Sigma$. Hence, 
\begin{equation}\label{stability free boundary}
0\leq I(f,f)= - \int_{\Sigma} f \Delta f + (-2 + |A|^2)f^2 dA + \int_{\partial \Sigma} f \frac{\partial f}{\partial \nu}\, d\sigma  - \int_{\partial \Sigma} \Pi_{\partial M}(N,N) f^2\, d\sigma
\end{equation}
The last integral in (\ref{stability free boundary})  is zero since $\partial M$ is totally geodesic. Since $\Sigma$ can be reflected across  $\partial M$ and $\omega$ depends only at the geometric data of $\Sigma$, the second integral is also zero. For the  first integral, we follow computation in \cite{RR} to obtain
\begin{eqnarray*}
	0\leq I(f,f)&=& - \int_{\Sigma} f \Delta_0 f + (\cosh^2(\omega)+ \sinh^2(\omega))f^2 dA_0  \\
	&=& - \sum_{i=1}^{2} \int_{\Omega_i} a_i^2\,\sinh^2(\omega)\big(\sinh^2(\omega)+ |\nabla_0\omega|^2 \big)\, dA_0 <0.
\end{eqnarray*}
Hence, the Gauss curvature of $\Sigma$ has a sign and by the Gauss-Bonnet Theorem we conclude that $\Sigma$ is flat.
\end{proof}
\begin{corollary}	If $\Sigma$ is a compact embedded stable cmc surface in  $\mathbb{H}^3/G_{\lambda}$, then $\Sigma$ is either a geodesic sphere or a tube about a  closed geodesic. If $\Sigma$ is a compact embedded stable cmc surface in $\mathbb{H}^3/G_a$, then $\Sigma$ is a geodesic sphere.
\end{corollary}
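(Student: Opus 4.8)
The plan is to reduce both assertions to the free boundary classification of Theorem \ref{stable cmc between geodesic planes} by passing to the universal cover $\mathbb{H}^3$, where a compact surface in the quotient becomes a complete, properly embedded, cylindrically bounded constant mean curvature surface, to which the Hsiang--Ritor\'e--Ros machinery already used in that proof applies almost verbatim. Concretely, let $\Sigma$ be a compact embedded stable cmc surface in $\mathbb{H}^3/G$, with $G=G_\lambda$ or $G=G_a$, and let $\pi\colon\mathbb{H}^3\to\mathbb{H}^3/G$ be the covering projection. First I would lift $\Sigma$ to $\tilde\Sigma=\pi^{-1}(\Sigma)$, a complete properly embedded $G$-invariant cmc surface in $\mathbb{H}^3$ with the same mean curvature $H$. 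Since $\Sigma$ is compact and $\pi$ identifies the two boundary components of the fundamental domain $M$, the lift $\tilde\Sigma$ is confined to a $G$-invariant set that is bounded transversally to the axis of $\gamma_2$ (respectively to the translation direction of $\gamma_1$); that is, $\tilde\Sigma$ is cylindrically bounded.

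Next I would split into two cases according to whether $\tilde\Sigma$ has a compact component. If a connected component of $\tilde\Sigma$ is already compact, then $\Sigma$ has a compact embedded cmc lift in $\mathbb{H}^3$, and Alexandrov's theorem forces it to be a geodesic sphere; this is the analogue of the one-boundary-component case of Theorem \ref{stable cmc between geodesic planes}. Otherwise $\tilde\Sigma$ is noncompact, and being complete, properly embedded and cylindrically bounded, Hsiang's theorem \cite{H} shows that $\tilde\Sigma$---and hence $\Sigma$---is rotationally invariant about the axis $\ell$ of $\gamma_2$. As in the proof of Theorem \ref{stable cmc between geodesic planes}, a maximum principle comparison with a flat equidistant tube gives $H>1$, so the Hopf differential produces the flat conformal metric $ds_0^2$ and the Sinh--Gordon equation $\Delta_0\omega+\sinh(\omega)\cosh(\omega)=0$ for the conformal factor $\omega$.

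At this point I would run the Ritor\'e--Ros stability argument of Theorem \ref{stable cmc between geodesic planes} directly in the quotient. The crucial point is that the competitor $f=a_i\sinh(\omega)$ depends only on the intrinsic geometry of $\Sigma$, hence is $G$-invariant and descends to a function on $\Sigma\subset\mathbb{H}^3/G$; choosing $a_1,a_2$ so that $\int_{\Sigma}f=0$, the stability inequality for $\Sigma$ in the quotient applies to $f$. The same computation as in the theorem then yields $0\le I(f,f)<0$ unless the sign components of $\omega$, equivalently of the Gauss curvature $K$ of $\Sigma$, are absent; so $K$ has a fixed sign, by Gauss--Bonnet $K\equiv 0$, and $\Sigma$ is flat. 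Thus in $\mathbb{H}^3/G_\lambda$ the noncompact case is a tube about the closed geodesic $\pi(\ell)$, which together with the sphere case proves the first assertion.

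For the parabolic quotient $\mathbb{H}^3/G_a$ the tube alternative cannot occur: $G_a$ is generated by the parabolic $\gamma_1$, which has no invariant geodesic, so there is no closed geodesic to serve as the core of a tube---equivalently, the two vertical planes bounding $M$ lie at hyperbolic distance zero, the distance-zero case of Theorem \ref{stable cmc between geodesic planes}. Hence only the compact alternative survives and $\Sigma$ must be a geodesic sphere. I expect the main obstacle to be the bookkeeping in this parabolic case, namely verifying that cylindrical boundedness together with the absence of an axis genuinely rules out noncompact rotationally symmetric solutions, and, more generally, confirming that the quotient stability inequality is strong enough to accommodate the volume-preserving, $G$-invariant Ritor\'e--Ros test function while preserving the strict inequality.
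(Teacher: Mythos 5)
Your proposal is correct and essentially coincides with the paper's derivation: the corollary is obtained there as a direct byproduct of the proof of Theorem \ref{stable cmc between geodesic planes}, whose reflected-and-iterated complete surface is precisely the $G$-invariant lift you construct, followed by the same appeal to Hsiang \cite{H} for rotational invariance and the Ritor\'e--Ros test function $a_i\sinh(\omega)$ (which, as you note, descends to the compact quotient surface, where the boundary terms are absent and Gauss--Bonnet forces flatness). The one wrinkle you flag---that in the $G_a$ case the lift is confined near a horocycle rather than a geodesic, so ``cylindrically bounded'' must be interpreted accordingly before invoking \cite{H}---is equally present in the paper's own proof, which disposes of the noncompact alternative exactly as you do, by observing that no $\gamma_1$-invariant geodesic exists to serve as the core of a tube.
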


\begin{remark}
	The isoperimetric problem between   horospheres is studied in \cite{CDP}. Building on the  symmetries of this setting and the Alexandrov reflection method,  the authors   show that  isoperimetric surfaces are rotationally invariant and classified those  that   meet the boundary orthogonally. The stability of these surfaces is not investigated and the possibility of onduloids  be isoperimetric for certain volumes is left open.
\end{remark}

\subsection{Stability of tubes} Let $\phi(r,\theta)=(e^r\cos(\theta), e^r\sin(\theta), a\,e^r)$ be the parametrization of a  tube $T$ of radius  $R= \log \big(\frac{1+ \sqrt{a^2+1}}{a}\big)$  about the  $z$-axis in $\mathbb{H}^3$. The metric and the second fundamental form of $T$ are
\begin{equation}\label{mean curvature tubes}
g = 
\begin{pmatrix}
\frac{1+a^2}{a^2} & 0  \\
0 & \frac{1}{a^2} 
\end{pmatrix}
\quad \text{and}\quad A = 
\begin{pmatrix}
\sqrt{a^2+1} & 0  \\
0 & \frac{1}{\sqrt{a^2+1}}
\end{pmatrix}
\end{equation}
Note that the mean curvature of $T$ satisfies  $H>1$. The Jacobi operator in this coordinate system is \[L= \frac{a^2}{1+a^2}\partial_{rr}+ a^2 \partial_{\theta\theta} + \big(a^2 + \frac{1}{a^2+1}-1\big).\]

 Let's look at the free boundary stability of $T$ between the  hyperbolic planes $z= \sqrt{1-x^2+ y^2}$ and $z=\sqrt{e^{2\lambda}-x^2-y^2}$. For this we look at the eigenvalue problem: 
\[
\bigg(\frac{a^2}{1+a^2} \partial_{rr}+ a^2\partial_{\theta\theta}\bigg) \varphi+ \big(a^2 + \frac{1}{a^2+1}-1\big)\varphi= - \lambda \varphi \quad \text{and}\quad \frac{\partial\varphi}{\partial r}(0,\theta)=\frac{\partial\varphi}{\partial r}(\lambda,\theta)=0
\] 
The constant functions are the first eigenfunctions with  eigenvalue  $\lambda_1= 1- a^2- \frac{a^2}{a^2+1}<0$. The stability  of $T$ is then equivalent to have the second eigenvalue $\lambda_2\geq 0$. Eigenfunctions of the form $\varphi(r,\theta)=\varphi(\theta)$ contribute positive eigenvalues for the Jacobi operator. An analysis of  the eigenfunctions $\varphi(r,\theta)=\varphi(r)$ shows that $\lambda_2\geq 0$ if, and only if,  $a\leq \frac{\pi}{\lambda}$. Note that the  distance between those two planes is $d_{\mathbb{H}}=\lambda$. Therefore,  the tube $T$ is stable if, and only if,
\begin{equation}\label{radius of stability}
R\geq \log \bigg(\frac{d_{\mathbb{H}}}{\pi} + \sqrt{\frac{d_{\mathbb{H}}^2}{\pi^2}+ 1}\bigg).
\end{equation}
 Let's look now at the volume preserving stability of $T$ in the cyclic quotient $\mathbb{H}^3/G_{e^{\lambda}}$.  For this we look at the eigenvalue problem: 
\[
\bigg(\frac{a^2}{1+a^2} \partial_{rr}+ a^2\partial_{\theta\theta}\bigg) \varphi+ \big(a^2 + \frac{1}{a^2+1}-1\big)\varphi= - \lambda \varphi \quad \text{and}\quad \varphi(0,\theta)=\varphi(k\lambda, \theta)\quad \forall k\in\mathbb{Z}
\] 
Following the discussion above, we have that  $\lambda_1= 1- a^2- \frac{a^2}{a^2+1}<0$ corresponding to the constant functions. The stability  of $T$ in $\mathbb{H}^3/G_{e^{\lambda}}$ is then equivalent to     $\lambda_2\geq 0$. As before, it is enough to consider eigenfunctions of the form $\varphi(r,\theta)=\varphi(r)$ which under the constraint above implies that 
$
a \leq \frac{2\pi}{\lambda}.
$

\subsection{Isoperimetric regions between two parallel geodesic planes} Up to a hyperbolic reflection, any region  bounded by two parallel geodesic planes of positive distance apart is congruent to the slab $M$ bounded by the hyperbolic planes $z= \sqrt{1-x^2+ y^2}$ and $z=\sqrt{e^{2\lambda}-x^2-y^2}$ for some $\lambda$. 
\vspace{0.2cm}

\noindent Let us now  discuss the  existence of isoperimetric regions in   $M$. We follow Morgan \cite{M1}. Let $\Omega_{\alpha}$ be a minimizing sequence for a fixed volume $V$. First we take a partition of $\mathbb{H}^3$ into congruent polyhedron $Q_j$ and consider only  those such that $Q_j \cap M\neq \emptyset$. Moreover, we choose $Q_j$ large enough so that $\Omega_\alpha$ does not contain any $Q_j$. Through  hyperbolic reflections  across the boundary of $M$, we can extend $\partial \Omega_{\alpha}$ such that its boundary  does not intersect $Q_j$. The number of reflections is independent of $\Omega_{\alpha}$.  The key observation is that $Q_j$ satisfies a relative isoperimetric inequality for some constant $\gamma$. As detailed in Morgan \cite{M1}, this isoperimetric inequality implies the existence of a number $\delta=\delta(\gamma,I(V))$ ($I$ is the isoperimetric profile of $M$) such that $vol(\Omega_{\alpha}\cap Q_i)> \delta V$ for some  of the $Q_i$. Choose  a rigid motion in $\mathbb{H}^3$ (not necessarily preserving $\partial M$) that brings the center of  $Q_i$ to a fixed point $O\in \mathbb{H}^3$. Standard compactness and regularity   applied to the sequence $\partial\Omega_{\alpha}$ passing through $O$ imply it will converge to a constant mean curvature surface $\partial\Omega$ enclosing volume $V_0\geq \delta V$. By the monotonicity formula,  $\partial \Omega$ must be bounded in $\mathbb{H}^3$ since it has finite area. It is enough to repeat the  process finitely many times to recover the  volume $V$; this also folows from the monotonicity formula  since the value of the constant mean curvature at each repetition does not change. Therefore, the minimizing sequence $\Omega_{\alpha}$ can be replaced by another which does not  drift off to infinity.
\vspace{0.2cm}

\noindent The situation is different when the hyperbolic distance between the planes is zero. For the region bounded by two vertical planes in the half-space model of $\mathbb{H}^3$ for example, there are no isoperimetric regions. If such set existed,  it would be either a half  geodesic ball or a tube about a geodesic. The latter is ruled out since there is no free boundary geodesic connecting the two planes. Moreover,  half geodesic spheres of a given  radius can always be constructed so that it is centered at one of the  planes and tangent to the other. Such configuration is not a critical point of the area functional under  volume constraints. In particular, there are no isoperimetric regions  $\mathbb{H}^3/G_a$ ($G_a$ composed of horizontal translations).
This   argument   also shows that    half geodesic balls  cannot be isoperimetric for all volumes in the slab bounded by two  geodesic planes of positive distance apart. Therefore, there  exists a critical volume $V_0$ for which tubes about geodesics  are the   isoperimetric surfaces when the enclosing volume satisfies $V\geq V_0$.

\end{document}